\documentclass[10pt,reqno]{amsart}

\usepackage[utf8]{inputenc}
\usepackage[T1]{fontenc}

\usepackage{fixltx2e}
\usepackage{microtype}

\usepackage{amscd}
\usepackage{amssymb}
\usepackage{bbm}

\usepackage[letterpaper,hmargin=1in,vmargin=1in]{geometry}
\usepackage{mathtools}
\usepackage{mathrsfs}
\usepackage{enumerate}
\usepackage{ifthen}

\usepackage{notation}
\usepackage{claim}
\usepackage{bdstruct}

\usepackage{ifpdf}
\ifpdf
\usepackage[pdftex,%
pdfauthor={Marcel Kenji de Carli Silva and Levent Tunçel},%
pdftitle={Vertices of Spectrahedra arising from the Elliptope, the Theta Body, and Their Relatives},%
colorlinks=true,%
hypertexnames=true,bookmarks=true,pagebackref]{hyperref}
\else
\fi

\theoremstyle{definition}
\newtheorem{corollary}{Corollary}
\newtheorem{definition}[corollary]{Definition}
\newtheorem{proposition}[corollary]{Proposition}
\newtheorem{theorem}[corollary]{Theorem}
\newtheorem{lemma}[corollary]{Lemma}

\numberwithin{equation}{section}

\title[%
Vertices of Spectrahedra arising from the Elliptope, the Theta Body,
and Their Relatives%
]{%
  Vertices of Spectrahedra arising from the Elliptope,\\
  the Theta Body, and Their Relatives%
}

\author{Marcel K.\ de Carli Silva}
\thanks{%
  Research of the first author was supported in part by a Sinclair
  Scholarship, a Tutte Scholarship, Discovery Grants from NSERC, and
  by ONR research grant N00014-12-10049.
}

\author{Levent Tunçel}
\thanks{%
  Research of the second author was supported in part by a research
  grant from University of Waterloo, Discovery Grants from NSERC and
  by ONR research grant N00014-12-10049.
}

\newcommand*{\cb}{\bar{c}}
\newcommand*{\Xb}{\bar{X}}
\newcommand*{\xb}{\bar{x}}
\newcommand*{\Xh}{\hat{X}}
\newcommand*{\xh}{\hat{x}}
\newcommand*{\xt}{\tilde{x}}
\newcommand*{\yb}{\bar{y}}
\newcommand*{\Yh}{\hat{Y}}
\newcommand*{\zb}{\bar{z}}

\def\TH{\operatorname{TH}}

\begin{document}

\begin{abstract}
  Utilizing dual descriptions of the normal cone of convex
  optimization problems in conic form, we characterize the vertices of
  semidefinite representations arising from Lovász theta body,
  generalizations of the elliptope and related convex sets. Our
  results generalize vertex characterizations due to Laurent and
  Poljak from the 1990's. Our approach also leads us to nice
  characterizations of strict complementarity and to connections with
  some of the related literature.
\end{abstract}

\maketitle

\section{Introduction}

The study of the boundary structure of polyhedra arising from
combinatorial optimization problems has been a very successful
undertaking in the field of polyhedral combinatorics. Part of this
success relies on a very rich interplay between geometric and
algebraic properties of the faces of such polyhedra and corresponding
combinatorial structures of the problems they encode. This remains
true even in the context of some NP\nbd-hard problems, where one is
generally resigned to seek partial characterizations of the boundary
structure via some families of facets.

A different line of attack on combinatorial optimization problems,
which has become quite popular, is that of utilizing semidefinite
programming (SDP) relaxations. For the stable set problem on perfect
graphs, semidefinite formulations provide the only known approach for
efficient solution. Feasible regions of SDPs, known as
\emph{spectrahedra}, are in general much richer in complexity than
polyhedra. However, or perhaps owing to that, it is reasonable to
presume the existence of a wealth of combinatorial information encoded
in the boundary structure of spectrahedra arising from combinatorial
optimization problems. Indeed, since semidefinite optimization is a
strict generalization of linear optimization, SDPs should in principle
encode at least all that is known via polyhedral combinatorics.

Nonetheless, currently, results relating the boundary structure of
SDPs and combinatorial properties of the corresponding problems are
rather scarce. In fact, even the study of the boundary structure of
SDPs \textit{per~se} is somewhat limited. A representative sample
seems to be given by~\cite{ChristensenV79a, Loewy80a, LiT94a,
  Barvinok95a, Pataki96a, Pataki98a, Barvinok01a, Alfakih06a,
  NieRS10a, BhardwajRS11a}.

A plausible reason behind this scarcity is simple to guess. From the
viewpoint of linear conic optimization, a (pointed) polyhedron is the
intersection of the nonnegative orthant~\(\Reals_+^n\) with an affine
subspace of~\(\Reals^n\), whereas a spectrahedron is the intersection
of the positive semidefinite cone~\(\Psd{n}\) with an affine subspace
of the set~\(\Sym{n}\) of \(n \times n\) symmetric matrices. By
regarding \(\Sym{n}\) as~\(\Reals^{n(n+1)/2}\) (and thus stripping off
the extremely convenient algebraic structure of~\(\Sym{n}\)), one
could argue that nothing is gained in terms of ambient space or affine
constraints when moving from polyhedra to spectrahedra (though we
shall question this very statement later on). On the other hand, the
boundary structure of the cone \(\Psd{n}\), while completely
understood (see, e.g., \cite{WolkowiczSV00a}), is far more intricate
than that of~\(\Reals_+^n\). The latter is in fact separable in that
it may be written as the direct sum of~\(n\) copies of the nonnegative
line~\(\Reals_+\). In this context, one is comparing the rich boundary
structure of~\(\Psd{n}\) with the trivial boundary structure
of~\(\Reals_+\). This difference in complexity goes even further when
contrasting the boundary structure of spectrahedra and polyhedra,
since the intersection of an affine subspace with~\(\Psd{n}\) can be
so pathological that Strong Duality as well as Strict Complementarity
may fail for SDPs.

In this paper, we are interested in the \emph{vertices} of
spectrahedra arising from combinatorial optimization problems. We
focus on SDPs relaxations of the two combinatorial problems most
successfully attacked via SDPs, namely, \MaxCut\ and the stable set
problem. The aspects of the boundary structure we shall study revolve
around the concept of \emph{normal cone}. By carefully analyzing a
simple expression for the normal cone, we identify all vertices of
some of the spectrahedra arising from these two problems. We also
point out a simple relation between normal cones and strict
complementarity, which may be helpful in proving that the latter holds
for specific SDPs.

Vertices are naturally among the first objects to understand in a
study of the boundary structure. Recall that a \emph{vertex} of a
convex set is an \emph{extreme point} of that set whose normal cone is
full-dimensional. For a polyhedron, extreme points and vertices
coincide, and there are only finitely many of them. On the other hand,
the unit ball \(\setst{x \in \Reals^d}{\norm{x} \leq 1}\), which is
linearly isomorphic to a spectrahedron, has infinitely many extreme
points and no vertices whenever \(d \geq 2\). Indeed, the extreme
points of the unit ball~\(B\) in~\(\Reals^n\) centered at the origin
are precisely the unit vectors, but the normal cone at each such
vector is one-dimensional. This example illustrates why an extreme
point of a set whose normal cone is one-dimensional is called
\emph{smooth}, and how the dimension of a normal cone at a point is a
measure of the ``degree of non-smoothness'' of the set at that point.

Vertices of a convex set can also be regarded as the only likely
points to optimize a uniformly chosen linear function, in the
following sense. Fix a full-dimensional convex set~\(\ConvexSet \subseteq
\Reals^n\) and a point~\(\xb \in \ConvexSet\). Now choose a unit vector \(c
\in \Reals^n\) uniformly at random. Then the probability that \(\xb\)
is an optimal solution for the optimization problem \(\max
\setst{\iprod{c}{x}}{x \in \ConvexSet}\) is positive if and only if \(\xb\)
is a vertex of~\(\ConvexSet\).

The property described above may have practical significance in some
contexts where one formulates an SDP relaxation to a problem and the
vertices of the feasible region correspond exactly to the
combinatorial (or non-convex) objects from that problem. This kind of
situation may be useful in low-rank recovery schemes;
see~\cite{SaundersonCPW12a}. Other instances occur in combinatorial
optimization, in some previous results which suggest that vertices of
feasible regions of SDPs play an analogous role to that of extreme
points in polyhedral combinatorics. We discuss these next.

We start with the \emph{elliptope}~\(\Elliptope{V}\), the
spectrahedron arising from the famous SDP relaxation for \MaxCut\
utilized by Goemans and Williamson~\cite{GoemansW95a} in their
approximation algorithm. Laurent and Poljak~\cite{LaurentP95a,
  LaurentP96a} proved that all the vertices of the elliptope are
rank-one, i.e., they correspond precisely to the \emph{exact}
solutions to the \MaxCut\ problem. Next, we consider the \emph{theta
  body}~\(\TH(G)\) of a graph~\(G\), introduced
in~\cite{GroetschelLS86a} as a relaxation of the stable set polytope
of~\(G\). Shepherd~\cite{Shepherd01a} observed that, by a result
of~\cite{GroetschelLS84a}, the vertices of~\(\TH(G)\) are
precisely the incidence vectors of stable sets of~\(G\), i.e., again
the \emph{exact} solutions for the stable set problem. As far as we
know, these are the only results in the literature about vertices of
spectrahedra arising from combinatorial optimization problems.

One of our main results is both a generalization of the aforementioned
result by Laurent and Poljak and a different version of Shepherd's
observation. We describe it briefly. The theta body~\(\TH(G)\) of
a graph~\(G = (V,E)\) is naturally described as the projection
onto~\(\Reals^V\) of the feasible region of an SDP lying in the space
of symmetric matrices indexed by~\(\setlift{V}\), where \(0\) is a new
element. We denote this feasible region by \(\liftedTH(G)\). When the
graph~\(G\) has no edges, then \(\liftedTH(G)\) is a well-known
relaxation of the \emph{boolean quadric polytope} and it is linearly
isomorphic to the elliptope~\(\Elliptope{\setlift{V}}\). We will prove
that, for any graph~\(G\), all vertices of~\(\liftedTH(G)\) are
rank-one, i.e., they are the \emph{exact} solutions for the stable set
problem in the sense that they are the symmetric tensors of incidence
vectors of stable sets in~\(G\).

Using similar arguments, we shall find all vertices of some well-known
variants of \(\liftedTH(G)\) and~\(\Elliptope{V}\). These include the
SDP usually presented to introduce the Lovász theta number and its
variants, and also the SDP studied by Kleinberg and
Goemans~\cite{KleinbergG98a} for the vertex cover problem.

We should remark that throughout the paper we only study spectrahedra
in a very special form. In the literature, it is common to define
spectrahedra as sets of the form \(\setst{y \in \Reals^m}{A_0 +
  \sum_{i=1}^m y_i A_i \in \Psd{n}}\) for given matrices
\(A_0,\dotsc,A_m \in \Sym{n}\); the defining constraint is known as a
linear matrix inequality (LMI). For the sake of convenience, we shall
instead focus only on spectrahedra defined as the intersection of the
cone~\(\Psd{n}\) with an affine subspace of~\(\Sym{n}\). An advantage
is that, by confining ourselves to subsets of symmetric matrices, we
retain the ability to use directly the simple but powerful algebraic
structure of the underlying space~\(\Sym{n}\).

We start the next section with a general set-up for convex optimization
problems in conic form.  In this general form, we state and prove a dual
characterization of the normal cone.  Then we turn to the vertices of
spectrahedra arising from interesting combinatorial optimization problems.
Continuing with the normal cone, duality and boundary structure themes,
we conclude with a discussion of characterization of strict complementarity
via the normal cone and facially exposed faces of the polar convex bodies.

\section{Some Foundational Results}

\subsection{Notation and Preliminaries}

We work throughout with finite-dimensional inner-product spaces
over~\(\Reals\), and we denote them by \(\EuclideanA\)
and~\(\EuclideanB\). We denote the inner-product of \(x,y \in
\Euclidean\) by \(\iprod{x}{y}\). The \emph{dual} of~\(\EuclideanA\)
is denoted by~\(\EuclideanA^*\). The adjoint of a linear map \(\Acal
\ffrom \EuclideanA \fto \EuclideanB\) is denoted by \(\Acal^*\). If
\(\Acal\) is nonsingular, we set \(\Acal^{-*} \coloneqq
(\Acal^*)^{-1}\). If \(\Cone \subseteq \Euclidean\) is a pointed
closed convex cone with nonempty interior, the \emph{Löwner partial
  order} on~\(\Euclidean\) is defined by setting \(x \succeq_{\Cone}
y\) if \(x - y \in \Cone\).

Let \(U,V\) be finite sets. We equip the set \(\Reals^{U \times V}\)
of real \(U \times V\) matrices with the Frobenius inner-product
defined as \(\iprod{A}{B} \coloneqq \trace(A^{\transp} B)\), where
\(\trace\) is the trace. Let \(X \in \Reals^{U \times V}\). If \(S
\subseteq U\) and \(T \subseteq V\), then \(X[S,T]\) denotes the
submatrix of~\(X\) in \(\Reals^{S \times T}\). We also write \(X[S]
\coloneqq X[S,S]\).

Let \(V\) be a finite set. We denote the set of \(V \times V\)
symmetric matrices by~\(\Sym{V}\), the set of \(V \times V\) positive
semidefinite matrices by~\(\Psd{V}\), and the set of \(V \times V\)
positive definite matrices by~\(\Pd{V}\). For a positive
integer~\(n\), set \([n] \coloneqq \set{1,\dotsc,n}\). When \(V =
[n]\) we abuse the notation and write \(\Sym{n}\) for \(\Sym{[n]}\)
and similarly for other sets with a superscript~\(V\). Note that
\(\Sym{V}\) is a linear subspace of \(\Reals^{V \times V}\). For \(X
\in \Sym{n}\), we denote by \(\lambdadec(X) \in \Reals^n\) the vector
of eigenvalues of~\(X\) in non-increasing order. The map
\begin{equation*}
  \Symmetrize
  \ffrom Y \in \Reals^{V \times V}
  \mapsto \thalf\paren[\big]{Y + Y^{\transp}}
\end{equation*}
is the orthogonal projection from~\(\Reals^{V \times V}\)
onto~\(\Sym{V}\). For a matrix \(L \in \Reals^{V \times V}\), the map
\begin{equation*}
  \CongMap{L} \ffrom X \in \Sym{V} \mapsto L X L^{\transp}
\end{equation*}
is the congruence mapping. Note that \(\Symmetrize\)
and~\(\CongMap{L}\) commute.

The set of nonnegative reals is denoted by~\(\Reals_+\). Let \(V\) be
a finite set. The standard basis vectors of~\(\Reals^V\) are
\(\setst{e_i}{i \in V}\). The \emph{support} of \(x \in \Reals^V\) is
\(\supp(x) \coloneqq \setst{i \in V}{x_i \neq 0}\). The orthogonal
group on~\(V\) is denoted by~\(\OOrtho(V)\). The map \(\diag \ffrom
\Reals^{V \times V} \fto \Reals^V\) extracts the diagonal of a matrix;
its adjoint is denoted by \(\Diag\).

For a finite set~\(V\), we let \(\tbinom{V}{2}\) denote the set of all
subsets of~\(V\) of size~\(2\). If \(i,j \in V\) are distinct, we
abbreviate~\(\set{i,j}\) to~\(ij\). We also use the \emph{Iverson
  bracket}: if \(P\) is a predicate, we set
\begin{equation*}
  \Iverson{P}
  \coloneqq
  \begin{cases*}
    1 & if \(P\) holds; \\
    0 & otherwise.      \\
  \end{cases*}
\end{equation*}

Let \(\ConvexSet \subseteq \Euclidean\) be a convex set. The
\emph{relative interior} of \(\ConvexSet\) is denoted by
\(\relint(\ConvexSet)\). The \emph{boundary} of~\(\ConvexSet\) is
denoted by~\(\bd(\ConvexSet)\). The \emph{polar} of~\(\ConvexSet\) is
\(\polar{\ConvexSet} \coloneqq \setst{y \in \Euclidean^*}{\iprod{y}{x}
  \leq 1\,\forall x \in \ConvexSet}\). The smallest convex cone
containing~\(\ConvexSet\), with the origin adjoined, is denoted
by~\(\cone(\ConvexSet)\). The \emph{support function}
of~\(\ConvexSet\) is
\begin{equation*}
  \suppf{\ConvexSet}{y}
  \coloneqq
  \sup\setst*{
    \iprod{y}{x}
  }{
    x \in \ConvexSet
  }
\end{equation*}
and the \emph{gauge function} of~\(\ConvexSet\) is
\begin{equation*}
  \gauge{\ConvexSet}{x}
  \coloneqq
  \inf\setst{
    \lambda \geq 0
  }{
    x \in \lambda \ConvexSet
  }.
\end{equation*}
A convex subset \(\Face\) of~\(\ConvexSet\) is a \emph{face}
of~\(\ConvexSet\) if \(x,y \in \Face\) holds whenever \(x,y \in
\ConvexSet\) are such that the open line segment between~\(x\)
and~\(y\) meets~\(\Face\). A face~\(\Face\) of~\(\ConvexSet\) is
\emph{exposed} if it has the form \(\Face = \ConvexSet \cap
\HyperplaneA\) for a supporting hyperplane~\(\HyperplaneA\)
of~\(\ConvexSet\).

A \emph{convex corner} is a compact convex set \(\ConvexSet \subseteq
\Reals_+^V\) with nonempty interior which satisfies the following
property: if \(0 \leq y \leq x\) and \(x \in \ConvexSet\), then \(y
\in \ConvexSet\). The \emph{antiblocker} of~\(\ConvexSet\) is defined as
\begin{equation*}
  \abl{\ConvexSet}
  \coloneqq
  \polar{\ConvexSet} \cap \Reals_+^V.
\end{equation*}

Let \(\ConvexSet \subseteq \Euclidean\) be convex and let \(\xb \in
\ConvexSet\). Define the \emph{normal cone of~\(\ConvexSet\)
  at~\(\xb\)} as
\begin{equation*}
  \NormalCone{\ConvexSet}{\xb}
  \coloneqq
  \setst[\big]{
    c \in \Euclidean^*
  }{
    \iprod{c}{x}
    \leq
    \iprod{c}{\xb}
    \:\forall x \in \ConvexSet
  }.
\end{equation*}
We say that \(\xb\) is a \emph{vertex of~\(\ConvexSet\)} if
\(\dim(\NormalCone{\ConvexSet}{\xb}) = \dim(\Euclidean^*)\).

When a convex set~\(\ConvexSet\) is described as the intersection of a
polyhedron and a pointed closed convex cone with nonempty interior,
the Strong Duality Theorem for linear conic optimization yields a
simple algebraic expression for the normal cones of~\(\ConvexSet\).
This may be seen as a dual characterization of normal cones.
\begin{proposition}
  \label{prop:conic-lp-normal-cone}
  Let \(\Cone \subseteq \Euclidean\) be a pointed closed convex cone
  with nonempty interior. Let \(\Acal \ffrom \Euclidean \fto
  \Reals^p\) and \(\Bcal \ffrom \Euclidean \fto \Reals^q\) be linear
  functions. Let \(a \in \Reals^p\) and \(b \in \Reals^q\). Set
  \(\ConvexSet \coloneqq \setst{x \in \Cone}{\Acal(x) = a,\, \Bcal(x)
    \leq b}\). Suppose that \(\ConvexSet \cap \interior(\Cone) \neq
  \emptyset\). If \(\xb \in \ConvexSet\), then
  \begin{equation}
    \label{eq:conic-lp-normal-cone}
    \NormalCone{\ConvexSet}{\xb}
    =
    \Image(\Acal^*)
    +
    \setst*{
      \Bcal^*(z)
    }{
      z \in \Reals_+^q,\,
      \supp(z) \cap \supp\paren[\big]{\Bcal(\xb) - b} = \emptyset
    }
    -
    \paren[\big]{
      \Cone^* \cap \set{\xb}^{\perp}
    }.
  \end{equation}
\end{proposition}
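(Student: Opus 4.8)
The plan is to read membership in the normal cone as an optimality condition for a conic linear program and then apply strong duality. Note first that $c \in \NormalCone{\ConvexSet}{\xb}$ if and only if $\xb$ is an optimal solution of the primal problem $\sup\setst{\iprod{c}{x}}{x \in \ConvexSet}$; since $\xb$ is feasible, this holds if and only if that supremum equals $\iprod{c}{\xb}$, and in that case the supremum is finite.

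For the inclusion ``$\supseteq$'' I would argue directly, using no Slater-type hypothesis at all. Suppose $c = \Acal^*(y) + \Bcal^*(z) - s$ with $z \in \Reals_+^q$ satisfying $\supp(z) \cap \supp(\Bcal(\xb) - b) = \emptyset$ and $s \in \Cone^* \cap \set{\xb}^{\perp}$. For any $x \in \ConvexSet$, expand $\iprod{c}{x} = \iprod{y}{\Acal(x)} + \iprod{z}{\Bcal(x)} - \iprod{s}{x}$ and use $\Acal(x) = a$, $\Bcal(x) \leq b$, $z \geq 0$, $s \in \Cone^*$, $x \in \Cone$ to bound it above by $\iprod{y}{a} + \iprod{z}{b}$; then check that the support condition on $z$ and $s \perp \xb$ force this bound to equal $\iprod{c}{\xb}$. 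Hence $\iprod{c}{x} \leq \iprod{c}{\xb}$ for all $x \in \ConvexSet$, i.e.\ $c \in \NormalCone{\ConvexSet}{\xb}$.

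For the inclusion ``$\subseteq$'' the key tool is strong duality for conic linear programming. Writing the Lagrangian dual of the primal, with multipliers $y \in \Reals^p$ for $\Acal(x) = a$, $z \in \Reals_+^q$ for $\Bcal(x) \leq b$, and a slack $s \in \Cone^*$ for $x \in \Cone$, one obtains the dual $\inf\setst{\iprod{y}{a} + \iprod{z}{b}}{\Acal^*(y) + \Bcal^*(z) - c \in \Cone^*,\ z \in \Reals_+^q}$. Since $\ConvexSet$ meets $\interior(\Cone)$ and the remaining constraints $\Acal(x) = a$, $\Bcal(x) \leq b$ are polyhedral, the refined Slater condition applies: whenever the primal value is finite --- which it is, being $\iprod{c}{\xb}$, when $c \in \NormalCone{\ConvexSet}{\xb}$ --- strong duality holds and the dual infimum is attained. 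Taking an optimal dual solution $(y,z)$ and setting $s \coloneqq \Acal^*(y) + \Bcal^*(z) - c \in \Cone^*$ gives $c = \Acal^*(y) + \Bcal^*(z) - s$. Equating the optimal values, substituting this expression for $c$, and using $\Acal(\xb) = a$ yields $\iprod{s}{\xb} = \iprod{z}{\Bcal(\xb) - b}$; the left side is $\geq 0$ (as $s \in \Cone^*$, $\xb \in \Cone$) and the right side is $\leq 0$ (as $z \geq 0$, $\Bcal(\xb) \leq b$), so both vanish. From $\iprod{s}{\xb} = 0$ we get $s \in \Cone^* \cap \set{\xb}^{\perp}$, and from $\iprod{z}{\Bcal(\xb) - b} = 0$ with the sign patterns of $z$ and $\Bcal(\xb) - b$ we get $\supp(z) \cap \supp(\Bcal(\xb) - b) = \emptyset$. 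This exhibits $c$ as an element of the right-hand side of~\eqref{eq:conic-lp-normal-cone}.

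The main obstacle is invoking the correct form of the conic strong duality theorem. The naive Slater condition would demand a point strictly feasible for every constraint, whereas here strict feasibility is assumed only for the conic constraint $x \in \Cone$, and merely ordinary feasibility for the polyhedral system $\Acal(x) = a$, $\Bcal(x) \leq b$. One therefore needs the standard refinement that polyhedral constraints need not be satisfied strictly for strong duality and dual attainment to hold --- for instance by folding the polyhedral part into a single polyhedral cone and using that such cones are facially exposed (nice), or by citing the appropriate version of the duality theorem directly. Everything else is routine bookkeeping with adjoints and the complementary-slackness computation sketched above.
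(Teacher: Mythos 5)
Your proposal is correct and follows essentially the same route as the paper: the ``$\supseteq$'' direction by the same direct chain of inequalities, and the ``$\subseteq$'' direction via strong duality with dual attainment under a restricted Slater condition (strict feasibility only for the cone constraint), followed by the same complementary-slackness argument yielding $\bar{s} \in \Cone^* \cap \set{\xb}^{\perp}$ and the disjoint-support condition on $z$. The paper invokes exactly this refined duality theorem (citing a ``restricted Slater point''), so your identified ``main obstacle'' is precisely the hypothesis the paper's cited Strong Duality Theorem is designed to handle.
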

\begin{proof}
  First we prove `\(\subseteq\)'. Let \(c \in
  \NormalCone{\ConvexSet}{\xb}\). Then \(\xb\) is an optimal solution
  for the conic programming problem
  \begin{equation*}
    \sup\setst[\big]{
      \iprod{c}{x}
    }{
      \Acal(x) = a,\,
      \Bcal(x) \leq b,\,
      x \in \Cone
    },
  \end{equation*}
  which has a restricted Slater point by assumption, i.e., there
  exists \(\xt \in \interior(\Cone)\) such that \(\Acal(\xt) = a\) and
  \(\Bcal(\xt) \leq b\). By the Strong Duality Theorem~(see, e.g.,
  \cite[Theorem~1.1]{Silva13a}), its dual
  \begin{equation*}
    \inf\setst*{
      \iprod{a}{y}
      +
      \iprod{b}{z}
    }{
      y \in \Reals^p,\,
      z \in \Reals_+^q,\,
      \Acal^*(y) + \Bcal^*(z) \succeq_{\Cone^*} c
    }
  \end{equation*}
  has an optimal solution \(\yb \oplus \zb \in \Reals^p \oplus
  \Reals_+^q\) whose slack \(\bar{s} \coloneqq \Acal^*(\yb) +
  \Bcal^*(\zb) - c \in \Cone^*\) satisfies \(\iprod{\bar{s}}{\xb} =
  0\). (Here we use the usual inner-product \(\iprod{a}{b} =
  \iprodt{a}{b}\) in the dual space.) By complementarity, we also have
  \(\iprod{\Bcal(\xb) - b}{\zb} = 0\). Together with \(\Bcal(\xb) \leq
  b\) and \(\zb \in \Reals_+^q\), this implies that \(\supp(\zb) \cap
  \supp\paren[\big]{\Bcal(\xb) - b} = \emptyset\). Since \(c =
  \Acal^*(\yb) + \Bcal^*(\zb) - \bar{s}\), we find that \(c\) lies on
  the set described by the RHS of~\eqref{eq:conic-lp-normal-cone}.

  Next we prove `\(\supseteq\)'. Let \(\bar{s} \in \Cone^* \cap
  \set{\xb}^{\perp}\), let \(\yb \in \Reals^p\) and \(\zb \in
  \Reals_+^q\) such that \(\supp(\zb) \cap
  \supp\paren[\big]{\Bcal(\xb) - b} = \emptyset\). Set \(c \coloneqq
  \Acal^*(\yb) + \Bcal^*(\zb) - \bar{s}\). If \(x \in \ConvexSet\),
  then
  \begin{equation*}
    \begin{split}
      \iprod{c}{x}
      & =
      \iprod{\Acal^*(\yb)}{x}
      +
      \iprod{\Bcal^*(\zb)}{x}
      -
      \iprod{\bar{s}}{x}
      =
      \iprod{\yb}{\Acal(x)}
      +
      \iprod{\zb}{\Bcal(x)}
      -
      \iprod{\bar{s}}{x}
      =
      \iprod{\yb}{a}
      +
      \iprod{\zb}{\Bcal(x)}
      -
      \iprod{\bar{s}}{x}
      \\
      & \leq
      \iprod{\yb}{a}
      +
      \iprod{\zb}{b}
      =
      \iprod{\yb}{a}
      +
      \iprod{\zb}{b}
      -
      \iprod{\bar{s}}{\xb}
      =
      \iprod{\yb}{\Acal(\xb)}
      +
      \iprod{\zb}{\Bcal(\xb)}
      -
      \iprod{\bar{s}}{\xb}
      \\
      & =
      \iprod{\Acal^*(\yb)}{\xb}
      +
      \iprod{\Bcal^*(\zb)}{\xb}
      -
      \iprod{\bar{s}}{\xb}
      =
      \iprod{c}{\xb}.
    \end{split}
  \end{equation*}
  Thus, \(c \in \NormalCone{\ConvexSet}{\xb}\).
\end{proof}

Now, we move back to the special case of SDP. In this setting, it is
beneficial to exploit the extra algebraic properties of the underlying
space~\(\Sym{n}\). A conspicuous extra feature is the fact that each
point in a spectrahedron, as a matrix, has a range, a nullspace, and a
rank. We shall use these concepts to massage the
identity~\eqref{eq:conic-lp-normal-cone} for the normal cone and
obtain a simple formula for its dimension.

We start by examining the rightmost term
in~\eqref{eq:conic-lp-normal-cone}, namely \(\Cone^* \cap
\set{\xb}^{\perp}\), known as the conjugate face of~\(\xb\)
in~\(\Cone^*\). When \(\Cone\) is the positive semidefinite
cone~\(\Psd{n}\), the conjugate face of a point~\(\Xb\) in~\(\Psd{n}\)
may be described as a lifted copy of a smaller semidefinite cone,
appropriately rotated via a linear automorphism of~\(\Psd{n}\) which
depends only on the range of~\(\Xb\). This allows us to associate the
dimension of the conjugate face to the rank of~\(\Xb\), as shown by
the following well-known result (for the sake of completeness, we
include a proof):
\begin{proposition}
  \label{prop:conjugate-face-sdp}
  Let \(\Xb \in \Psd{n}\). Let \(Q \in \OOrtho(n)\) such that \(X = Q
  \Diag\paren[\big]{\lambda^{\downarrow}(\Xb)} Q^{\transp}\), and set
  \(r \coloneqq \rank(\Xb)\). Then
  \begin{gather}
    \label{eq:conjugate-face-sdp}
    \Psd{n} \cap \set{\Xb}^{\perp}
    =
    Q
    \paren[\big]{
      0 \oplus \Psd{n-r}
    }
    Q^{\transp},
    \\
    \label{eq:conjugate-face-sdp-ri}
    \relint\paren[\big]{
      \Psd{n} \cap \set{\Xb}^{\perp}
    }
    =
    Q
    \paren[\big]{
      0 \oplus \Pd{n-r}
    }
    Q^{\transp},
    \\
    \label{eq:conjugate-face-sdp-dim}
    \dim\paren[\big]{
      \Psd{n} \cap \set{\Xb}^{\perp}
    }
    =
    \binom{
      \dim\paren[\big]{\Null(\Xb)} + 1
    }{
      2
    }
    \\
    \label{eq:conjugate-face-sdp-cone}
    \Psd{n} \cap \set{\Xb}^{\perp}
    =
    \cone\setst[\big]{
      \oprodsym{b}
    }{
      b \in \Null(\Xb)
    }.
  \end{gather}
\end{proposition}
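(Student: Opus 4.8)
The plan is to reduce all four identities to the single observation that, for $Y \in \Psd{n}$, one has $\iprod{\Xb}{Y} = 0$ if and only if $\Xb Y = 0$. The forward implication is the only nontrivial part: $\iprod{\Xb}{Y} = \trace(\Xb Y) = \trace\paren[\big]{\Xb^{1/2} Y \Xb^{1/2}}$ is the trace of a positive semidefinite matrix, hence vanishes only when $\Xb^{1/2} Y \Xb^{1/2} = 0$; writing the latter as $M M^{\transp}$ with $M \coloneqq \Xb^{1/2} Y^{1/2}$ forces $M = 0$, and therefore $\Xb Y = \Xb^{1/2} M Y^{1/2} = 0$. The converse is trivial, and since moreover $\iprod{\Xb}{Y} \geq 0$ for all $Y \in \Psd{n}$ (which is why this set is a face of $\Psd{n}$), we conclude
\[
  \Psd{n} \cap \set{\Xb}^{\perp}
  =
  \setst{Y \in \Psd{n}}{\Xb Y = 0}.
\]

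Next I would diagonalize. Set $\Lambda \coloneqq \Diag\paren[\big]{\lambda^{\downarrow}(\Xb)}$; since $\Xb \in \Psd{n}$ has rank $r$, exactly the first $r$ diagonal entries of $\Lambda$ are positive and the last $n-r$ vanish. For $Y \in \Psd{n}$ put $Z \coloneqq Q^{\transp} Y Q \in \Psd{n}$, so that $\Xb Y = Q \Lambda Z Q^{\transp}$; thus $\Xb Y = 0$ is equivalent to $\Lambda Z = 0$, i.e.\ to the first $r$ rows of $Z$ being zero, which (using $Z = Z^{\transp}$ and $Z \succeq 0$) is in turn equivalent to $Z \in 0 \oplus \Psd{n-r}$. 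Applying the congruence $\CongMap{Q}$ to this description yields \eqref{eq:conjugate-face-sdp}. Because $\CongMap{Q}$ is a linear automorphism of $\Sym{n}$, it preserves dimension and commutes with taking relative interiors; hence \eqref{eq:conjugate-face-sdp-ri} follows from $\relint\paren[\big]{0 \oplus \Psd{n-r}} = 0 \oplus \Pd{n-r}$, while \eqref{eq:conjugate-face-sdp-dim} follows from $\dim\paren[\big]{0 \oplus \Psd{n-r}} = \binom{n-r+1}{2}$ together with $n - r = \dim\paren[\big]{\Null(\Xb)}$.

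It remains to prove \eqref{eq:conjugate-face-sdp-cone}. The set $\Psd{n} \cap \set{\Xb}^{\perp}$ is a convex cone containing the origin, and for each $b \in \Null(\Xb)$ the rank-one matrix $\oprodsym{b}$ is positive semidefinite and satisfies $\Xb \oprodsym{b} = 0$; hence $\Psd{n} \cap \set{\Xb}^{\perp} \supseteq \cone\setst[\big]{\oprodsym{b}}{b \in \Null(\Xb)}$. For the reverse inclusion, take $Y \in \Psd{n}$ with $\Xb Y = 0$ and a spectral decomposition $Y = \sum_i \mu_i \oprodsym{v_i}$ with $\mu_i > 0$ and $v_i$ orthonormal eigenvectors of $Y$; then $\mu_i \Xb v_i = \Xb Y v_i = 0$ and $\mu_i > 0$ give $v_i \in \Null(\Xb)$, so $Y$ is a nonnegative combination of the stated generators.

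I do not anticipate a serious obstacle: the proof is essentially bookkeeping once the first display is in hand. The two steps that require a little care are the square-root manipulation establishing that display — in particular, justifying that $\Xb^{1/2} Y \Xb^{1/2} = 0$ forces $\Xb Y = 0$ — and the passage from orthogonality against $\Xb$ to a vanishing diagonal block of $Z = Q^{\transp} Y Q$, which is where the ordering of the eigenvalues of $\Xb$ (so that the positive ones occupy the first $r$ coordinates) is used.
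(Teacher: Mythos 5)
Your proof is correct and follows essentially the same route as the paper: reduce to the block description \(0 \oplus \Psd{n-r}\) via the congruence \(\CongMap{Q}\), then read off the relative interior, the dimension, and the rank-one generators. The only minor difference is that you establish the block form through the coordinate-free equivalence \(\iprod{\Xb}{Y} = 0 \iff \Xb Y = 0\) (via square roots), whereas the paper argues directly on the diagonal entries of \(Y\) against \(\Diag\paren[\big]{\lambda^{\downarrow}(\Xb)}\); both arguments are valid and of comparable length.
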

\begin{proof}
  Set \(\lambda \coloneqq \lambda^{\downarrow}(\Xb)\). Let us prove
  that
  \begin{equation}
    \label{eq:conjugate-face-sdp-aux1}
    \Psd{n} \cap \set{\Diag(\lambda)}^{\perp}
    =
    0 \oplus \Psd{n-r}.
  \end{equation}
  It is clear that~`\(\supseteq\)' holds. For the reverse inclusion,
  let \(Y \in \Psd{n} \cap \set{\Diag(\lambda)}^{\perp}\). Then \(0 =
  \iprod{Y}{\Diag(\lambda)} = \iprod{\diag(Y)}{\lambda}\), which
  together with \(\diag(Y) \geq 0\) and \(\lambda \geq 0\) implies
  that \(Y_{ii} = 0\) for each \(i \in \supp(\lambda) = [r]\). Since
  \(Y \in \Psd{n}\) we find that \(Y_{ij} = 0\) for each \(i \in [r]\)
  and \(j \in [n]\), so \(Y \in 0 \oplus \Psd{n-r}\). This
  proves~\eqref{eq:conjugate-face-sdp-aux1}.

  Set \(D \coloneqq \Diag(\lambda)\) and apply the map \(\CongMap{Q} =
  \CongMap{Q}^{-*}\) to both sides
  of~\eqref{eq:conjugate-face-sdp-aux1} to obtain
  \begin{equation*}
    \begin{split}
      Q
      \paren[\big]{
        0 \oplus \Psd{n-r}
      }
      Q^{\transp}
      & =
      \CongMap{Q}\paren[\big]{
        \Psd{n}
        \cap
        \paren{
          \linspan\set{D}
        }^{\perp}
      }
      =
      \CongMap{Q}\paren{
        \Psd{n}
      }
      \cap
      \CongMap{Q}\paren[\big]{
        \paren{
          \linspan\set{D}
        }^{\perp}
      }
      \\
      & =
      \Psd{n}
      \cap
      \paren[\big]{
        \CongMap{Q}^{-*}\paren{
          \linspan\set{D}
        }
      }^{\perp}
      =
      \Psd{n}
      \cap
      \paren{
        \linspan\set{\Xb}
      }^{\perp}.
    \end{split}
  \end{equation*}
  This proves~\eqref{eq:conjugate-face-sdp}.

  To prove~\eqref{eq:conjugate-face-sdp-ri}, apply \(\relint(\cdot)\)
  to both sides of~\eqref{eq:conjugate-face-sdp} to get
  \begin{equation*}
    \relint\paren[\big]{
      \Psd{n} \cap \set{\Xb}^{\perp}
    }
    =
    \relint\paren[\big]{
      \CongMap{Q}\paren{
        0 \oplus \Psd{n-r}
      }
    }
    =
    \CongMap{Q}\paren[\big]{
      \relint\paren{
        0 \oplus \Psd{n-r}
      }
    }
    =
    Q
    \paren[\big]{
      0 \oplus \Pd{n-r}
    }
    Q^{\transp}.
  \end{equation*}
  For~\eqref{eq:conjugate-face-sdp-dim}, use the fact that the
  nonsingular map \(\CongMap{Q}\) preserves dimension:
  \begin{equation*}
    \begin{split}
      \dim\paren{
        \Psd{n} \cap \set{\Xb}^{\perp}
      }
      & =
      \dim\paren[\big]{
        \CongMap{Q}\paren{
          0 \oplus \Psd{n-r}
        }
      }
      =
      \dim\paren[\big]{
        0 \oplus \Psd{n-r}
      }
      \\
      & =
      \dim\paren[\big]{
        \Psd{n-r}
      }
      =
      \binom{n-r+1}{2}
      =
      \binom{\dim\paren[\big]{\Null(\Xb)} + 1}{2}
    \end{split}
  \end{equation*}

  Finally, we prove~\eqref{eq:conjugate-face-sdp-cone}. Let \(Y \in
  \Psd{n}\) be arbitrary, and write~\(Y\) as \(Y = \sum_{i=1}^k
  \oprodsym{h_i}\) where \(\setst{h_i}{i \in [k]} \subseteq
  \Reals^n\). Since \(Y \in \Psd{n}\), the equation \(\iprod{\Xb}{Y} =
  0\) is equivalent to \(\qform{\Xb}{h_i} = 0\) for each \(i \in
  [k]\). Since \(\Xb \in \Psd{n}\), the latter is equivalent to \(h_i
  \in \Null(\Xb)\) for each \(i \in [k]\).
\end{proof}

As the proof of Proposition~\ref{prop:conjugate-face-sdp} illustrates,
it is often helpful to restrict our attention to a specific class of
positive semidefinite matrices (e.g., diagonal matrices) for which it
is easy to prove a result, and then extend it by changing the basis,
e.g., by applying a congruence~\(\CongMap{Q}\). We now look at how
normal cones behave when we apply such transformations.

Let \(\ConvexSet \subseteq \Euclidean\) be a convex set and let \(\xb
\in \ConvexSet\). If \(T \ffrom \EuclideanA \fto \EuclideanB\) is a
linear bijection, then
\begin{equation}
  \label{eq:normal-cone-translation}
  \begin{split}
    \NormalCone[\big]{T(\ConvexSet)}{T(\xb)}
    & =
    \setst[\big]{
      c \in \EuclideanB^*
    }{
      \iprod{c}{T(x)}
      \leq
      \iprod{c}{T(\xb)}\,
      \forall x \in \ConvexSet
    }
    \\
    & =
    \setst[\big]{
      c \in \EuclideanB^*
    }{
      \iprod{T^*(c)}{x}
      \leq
      \iprod{T^*(c)}{\xb}\,
      \forall x \in \ConvexSet
    }
    \\
    & =
    \setst[\big]{
      T^{-*}(d) \in \EuclideanB^*
    }{
      \iprod{d}{x}
      \leq
      \iprod{d}{\xb}\,
      \forall x \in \ConvexSet
    }
    =
    T^{-*}\paren[\big]{
      \NormalCone{\ConvexSet}{\xb}
    }.
  \end{split}
\end{equation}

The identity~\eqref{eq:normal-cone-translation} shows that the
coordinate-free properties of normal cones remain invariant under
linear bijections. In the case of SDPs, we can say a bit more in terms
of the rank of a feasible matrix~\(\Xb\).

\begin{lemma}
  \label{lemma:invariance}
  Let \(\Acal \ffrom \Sym{n} \fto \Reals^p\) and \(\Bcal \ffrom
  \Sym{n} \fto \Reals^q\) be linear functions. Let \(a \in \Reals^p\)
  and \(b \in \Reals^q\). Let \(L \in \Reals^{n \times n}\) be
  nonsingular, and define
  \begin{gather*}
    \Acal_L
    \coloneqq
    \Acal \compose \CongMap{L}^{-1}
    \quad
    \text{and}
    \quad
    \Bcal_L
    \coloneqq
    \Bcal \compose \CongMap{L}^{-1},
    \\
    \ConvexSet
    \coloneqq
    \setst{
      X \in \Psd{n}
    }{
      \Acal(X) = a,\,
      \Bcal(X) \leq b
    },
    \\
    \ConvexSet_L
    \coloneqq
    \CongMap{L}\paren{
      \ConvexSet
    }
    =
    \setst{
      Y \in \Psd{n}
    }{
      \Acal_L(Y) = a,\,
      \Bcal_L(Y) \leq b
    }.
  \end{gather*}
  Then, for any \(\Xb \in \ConvexSet\), we have
  \begin{rmlist*}
  \item\label{list:invariance-slater} \(\ConvexSet \cap \Pd{n} \neq
    \emptyset\) if and only if \(\ConvexSet_L \cap \Pd{n} \neq \emptyset\);
  \item\label{list:invariance-cone}
    \(\NormalCone[\big]{\ConvexSet_L}{\CongMap{L}(\Xb)} =
    \CongMap{L}^{-*}\paren[\big]{\NormalCone{\ConvexSet}{\Xb}}\)
    so
    \(\dim\paren[\big]{\NormalCone[\big]{\ConvexSet_L}{\CongMap{L}(\Xb)}}
    = \dim\paren[\big]{\NormalCone{\ConvexSet}{\Xb}}\);
  \item\label{list:invariance-adjoint} \(\Image(\Acal_L^*) =
    \CongMap{L}^{-*} \paren[\big]{\Image(\Acal^*)}\) so
    \(\dim\paren[\big]{\Image(\Acal_L^*)} =
    \dim\paren[\big]{\Image(\Acal^*)}\); and analogously
    for~\(\Image(\Bcal_L^*)\);
  \item\label{list:invariance-nullspace}
    \(\Null\paren[\big]{\CongMap{L}(\Xb)} = L^{-\transp} \Null(\Xb)\)
    so \(\rank\paren[\big]{\CongMap{L}(\Xb)} = \rank(\Xb)\).
  \end{rmlist*}
\end{lemma}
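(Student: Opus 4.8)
The plan is to obtain all four items from a short list of standard properties of the congruence map $\CongMap{L}$. I would begin by recording that $\CongMap{L} \ffrom \Sym{n} \fto \Sym{n}$ is a linear bijection with inverse $\CongMap{L}^{-1} = \CongMap{L^{-1}}$; that it maps $\Psd{n}$ onto $\Psd{n}$ and $\Pd{n}$ onto $\Pd{n}$, since $\qform{L X L^{\transp}}{v} = \qform{X}{L^{\transp} v}$ for every $v \in \Reals^n$ and $L^{\transp}$ is bijective; and that its Frobenius adjoint is $\CongMap{L}^* = \CongMap{L^{\transp}}$, because $\iprod{\CongMap{L}(X)}{Y} = \trace(L X L^{\transp} Y) = \trace(X L^{\transp} Y L) = \iprod{X}{\CongMap{L^{\transp}}(Y)}$ by cyclic invariance of the trace. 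In particular $\CongMap{L}^{-*} = \CongMap{L^{-\transp}}$ is again a linear bijection of $\Sym{n}$. I would also note (or take from the statement) that $Y \in \ConvexSet_L$ iff $\CongMap{L}^{-1}(Y) \in \ConvexSet$ iff $\Acal_L(Y) = a$, $\Bcal_L(Y) \leq b$ and $Y \in \Psd{n}$, which is the asserted description of $\ConvexSet_L$.

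With these in hand, \ref{list:invariance-slater} follows since $\ConvexSet_L \cap \Pd{n} = \CongMap{L}(\ConvexSet) \cap \CongMap{L}(\Pd{n}) = \CongMap{L}(\ConvexSet \cap \Pd{n})$ is nonempty exactly when $\ConvexSet \cap \Pd{n}$ is. For \ref{list:invariance-cone}, I would apply the identity~\eqref{eq:normal-cone-translation} with the linear bijection $T \coloneqq \CongMap{L}$ and the point $\Xb \in \ConvexSet$, getting $\NormalCone{\ConvexSet_L}{\CongMap{L}(\Xb)} = \CongMap{L}^{-*}\paren{\NormalCone{\ConvexSet}{\Xb}}$; the dimension claim is then immediate because $\CongMap{L}^{-*}$ is a bijection. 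For \ref{list:invariance-adjoint}, from $\Acal_L = \Acal \compose \CongMap{L}^{-1}$ we get $\Acal_L^* = \paren{\CongMap{L}^{-1}}^* \compose \Acal^* = \CongMap{L}^{-*} \compose \Acal^*$, so $\Image(\Acal_L^*) = \CongMap{L}^{-*}\paren{\Image(\Acal^*)}$, and the dimensions agree since $\CongMap{L}^{-*}$ is bijective; the argument for $\Bcal_L$ is word for word the same. For \ref{list:invariance-nullspace}, a vector $v \in \Reals^n$ satisfies $\CongMap{L}(\Xb) v = L \Xb L^{\transp} v = 0$ iff $\Xb L^{\transp} v = 0$ (as $L$ is nonsingular) iff $L^{\transp} v \in \Null(\Xb)$ iff $v \in L^{-\transp} \Null(\Xb)$; hence $\Null\paren{\CongMap{L}(\Xb)} = L^{-\transp} \Null(\Xb)$, and since $L^{-\transp}$ is a bijection this preserves dimension, giving $\rank\paren{\CongMap{L}(\Xb)} = n - \dim \Null\paren{\CongMap{L}(\Xb)} = n - \dim \Null(\Xb) = \rank(\Xb)$.

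I do not expect any real obstacle here; every step is a one-line consequence of the facts collected above. The only point that demands a bit of care is the adjoint bookkeeping: pinning down $\CongMap{L}^* = \CongMap{L^{\transp}}$ (hence $\CongMap{L}^{-*} = \CongMap{L^{-\transp}}$), and then making sure the correct map is fed into~\eqref{eq:normal-cone-translation} and into the composition rule $\paren{\Acal \compose \CongMap{L}^{-1}}^* = \CongMap{L}^{-*} \compose \Acal^*$. Once that is settled, the proof is short, and I would present the four items in the order above.
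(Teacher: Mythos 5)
Your proposal is correct and follows essentially the same route as the paper: item (i) via $\ConvexSet_L \cap \Pd{n} = \CongMap{L}\paren[\big]{\ConvexSet \cap \Pd{n}}$, item (ii) via the identity~\eqref{eq:normal-cone-translation} applied to $T = \CongMap{L}$, item (iii) by the adjoint composition rule, and item (iv) by the same nullspace computation. The only difference is that you spell out the routine facts (e.g.\ $\CongMap{L}^* = \CongMap{L^{\transp}}$ and hence $\CongMap{L}^{-*} = \CongMap{L^{-\transp}}$) that the paper leaves as ``elementary linear algebra.''
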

\begin{proof}
  We shall use throughout the fact that the map \(\CongMap{L}\) is an
  automorphism of~\(\Psd{n}\), and in particular it is nonsingular.

  Note that \(\ConvexSet_L \cap \Pd{n} = \CongMap{L}(\ConvexSet) \cap
  \Pd{n} = \CongMap{L}\paren[\big]{\ConvexSet \cap \Pd{n}}\). This
  proves~\eqref{list:invariance-slater}.
  Statement~\eqref{list:invariance-cone} follows
  from~\eqref{eq:normal-cone-translation},
  whereas~\eqref{list:invariance-adjoint} is elementary linear
  algebra. For~\eqref{list:invariance-nullspace}, let \(h \in
  \Reals^n\) and note that \(L\Xb L^{\transp} h = 0\) is equivalent to
  \(\Xb L^{\transp} h = 0\), i.e., \(L^{\transp} h \in \Null(\Xb)\).
\end{proof}

\subsection{Vertices of the Elliptope}

We now recall some of the results from the papers~\cite{LaurentP95a,
  LaurentP96a}. We first state a slightly generalized version of a
result by Laurent and Poljak~\cite{LaurentP96a} and give a proof for
the sake of completeness.
\begin{theorem}[\cite{LaurentP96a}]
  \label{thm:normal-cone-modular-rank}
  Let \(\Acal \ffrom \Sym{n} \fto \Reals^m\) be a linear map, and let
  \(A_i \coloneqq \Acal^*(e_i)\) for each \(i \in [m]\). Let \(b \in
  \Reals^m\) such that \(\supp(b) = [m]\). Set \(\ConvexSet \coloneqq
  \setst{X \in \Psd{n}}{\Acal(X) = b}\). Suppose that \(\ConvexSet
  \cap \Pd{n} \neq \emptyset\) and \(\rank(\sum_{i=1}^m A_i) =
  \sum_{i=1}^m \rank(A_i)\). Then, for every \(\Xb \in \ConvexSet\),
  we have
  \begin{equation}
    \label{eq:dim-normal-cone-from-Null}
    \dim\paren[\big]{
      \NormalCone{\ConvexSet}{\Xb}}
    =
    \dim\paren[\big]{
      \Image(\Acal^*)
    }
    +
    \binom{
      \dim\paren[\big]{
        \Null(\Xb)
      }
      +
      1
    }{
      2
    }.
  \end{equation}
\end{theorem}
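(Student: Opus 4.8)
The approach is to read off the normal cone from Proposition~\ref{prop:conic-lp-normal-cone}, identify the only place where its two pieces can overlap, and use the rank hypothesis to rule that overlap out. First I would apply Proposition~\ref{prop:conic-lp-normal-cone} to the description \(\ConvexSet = \setst{X \in \Psd{n}}{\Acal(X) = b}\), with \(\Cone \coloneqq \Psd{n}\) (a pointed, closed, self-dual cone with nonempty interior) and with no inequality constraints; the hypothesis \(\ConvexSet \cap \Pd{n} \neq \emptyset\) supplies the required restricted Slater point. For every \(\Xb \in \ConvexSet\) this gives
\[
  \NormalCone{\ConvexSet}{\Xb}
  =
  \Image(\Acal^*) - \paren[\big]{\Psd{n} \cap \set{\Xb}^{\perp}}.
\]
The right-hand side is the Minkowski sum of the linear subspace \(\Image(\Acal^*)\) with the pointed closed convex cone \(-(\Psd{n} \cap \set{\Xb}^{\perp})\), which contains the origin; hence its affine hull, and so the dimension we want, is \(\dim\paren[\big]{\Image(\Acal^*) + W}\), where \(W \coloneqq \linspan\paren[\big]{\Psd{n} \cap \set{\Xb}^{\perp}}\). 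By Proposition~\ref{prop:conjugate-face-sdp}, \(W = \CongMap{Q}\paren[\big]{0 \oplus \Sym{n-r}} = \setst{M \in \Sym{n}}{\Xb M = 0}\), and by~\eqref{eq:conjugate-face-sdp-dim} it has dimension \(\binom{\dim(\Null(\Xb))+1}{2}\). Therefore~\eqref{eq:dim-normal-cone-from-Null} reduces to the transversality statement
\[
  \Image(\Acal^*) \cap \setst{M \in \Sym{n}}{\Xb M = 0} = \set{0},
\]
that is, \(\Xb\,\Acal^*(y) = 0\) forces \(\Acal^*(y) = 0\): then \(\dim\paren[\big]{\Image(\Acal^*) + W} = \dim\paren[\big]{\Image(\Acal^*)} + \dim(W)\), which is~\eqref{eq:dim-normal-cone-from-Null}.

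To prove the transversality statement I would first reduce, via Lemma~\ref{lemma:invariance}, to the case in which the ranges \(\Image(A_1), \dotsc, \Image(A_m)\) are pairwise orthogonal. The rank hypothesis gives the relevant directness immediately: from \(\Image\paren[\big]{\sum_i A_i} \subseteq \sum_i \Image(A_i)\) we get \(\rank\paren[\big]{\sum_i A_i} \leq \dim\paren[\big]{\sum_i \Image(A_i)} \leq \sum_i \rank(A_i)\), so equality of the two ends forces the sum \(\sum_i \Image(A_i)\) to be direct. Choose a nonsingular \(M \in \Reals^{n \times n}\) taking a basis of \(\Reals^n\) adapted to \(\bigoplus_i \Image(A_i)\) to an orthonormal basis respecting that decomposition, so that \(M\,\Image(A_1), \dotsc, M\,\Image(A_m)\) are pairwise orthogonal, set \(L \coloneqq M^{-\transp}\), and pass to the data \(\Acal_L, \ConvexSet_L, \CongMap{L}(\Xb)\) of Lemma~\ref{lemma:invariance}. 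One checks: \(\Acal_L^*(e_i) = \CongMap{L}^{-*}(A_i) = M A_i M^{\transp}\), whose range is \(M\,\Image(A_i)\); \(\sum_i M A_i M^{\transp} = M\paren[\big]{\sum_i A_i}M^{\transp}\), so the rank hypothesis persists; \(\supp(b) = [m]\) is untouched; and, by Lemma~\ref{lemma:invariance}, the restricted Slater condition still holds and the dimensions \(\dim\paren[\big]{\NormalCone{\ConvexSet}{\Xb}}\), \(\dim\paren[\big]{\Image(\Acal^*)}\) and \(\dim(\Null(\Xb))\) are all preserved. Hence I may assume the subspaces \(V_i \coloneqq \Image(A_i)\) are pairwise orthogonal.

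Let \(P_i\) denote the orthogonal projector onto \(V_i\). Since \(A_i\) is symmetric with range \(V_i\) and nullspace \(V_i^{\perp}\), we have \(A_i = P_i A_i P_i\), the restriction of \(A_i\) to \(V_i\) is a bijection of \(V_i\), and \(P_i P_j = 0\) for \(i \neq j\). Now suppose \(\Xb N = 0\) for \(N \coloneqq \Acal^*(y) = \sum_i y_i A_i\). Then \(N v = \sum_i y_i A_i P_i v\) equals \(y_j A_j v\) for \(v \in V_j\) and equals \(0\) for \(v\) orthogonal to every \(V_i\); since \(A_j\) maps \(V_j\) onto \(V_j\), it follows that \(\Image(N) = \bigoplus_{i : y_i \neq 0} V_i\). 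But \(\Xb N = 0\) says \(\Image(N) \subseteq \Null(\Xb)\), so for each \(j\) with \(y_j \neq 0\) we have \(\Xb v = 0\) for all \(v \in V_j\), whence \(P_j \Xb P_j = 0\) and
\[
  b_j = \iprod{A_j}{\Xb} = \iprod{P_j A_j P_j}{\Xb} = \iprod{A_j}{P_j \Xb P_j} = 0,
\]
contradicting \(\supp(b) = [m]\). Therefore \(y = 0\), so \(N = 0\), proving the transversality statement and hence~\eqref{eq:dim-normal-cone-from-Null}.

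The one genuine obstacle is the transversality statement, and inside it the decisive step is the reduction to pairwise-orthogonal ranges: without orthogonality the \(A_i\) need not share any common block structure, and since \(\Xb\) may be singular one cannot simply cancel \(\Xb\) (or \(\Xb^{1/2}\)). Once the ranges are orthogonal the conclusion is forced — every nonzero combination \(\sum_i y_i A_i\) fills the whole block \(V_j\) for each \(j\) with \(y_j \neq 0\), while \(b_j \neq 0\) forbids \(\Xb\) from annihilating that block. The remaining ingredients — invoking the two propositions, the affine-hull computation for the Minkowski sum, and the invariances of Lemma~\ref{lemma:invariance} — are routine.
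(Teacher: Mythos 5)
Your proof is correct and follows essentially the same route as the paper's: the dual normal-cone formula of Proposition~\ref{prop:conic-lp-normal-cone} together with~\eqref{eq:conjugate-face-sdp-dim}, a congruence reduction via Lemma~\ref{lemma:invariance} and the rank hypothesis to pairwise orthogonal ranges (equivalent to the paper's normalization \(A_iA_j=0\)), and the observation that \(b_j\neq 0\) forbids \(\Null(\Xb)\) from containing \(\Image(A_j)\). The only differences are cosmetic: you phrase the key step as transversality of \(\Image(\Acal^*)\) with \(\linspan\paren[\big]{\Psd{n}\cap\set{\Xb}^{\perp}}\), which yields both inequalities at once, and you argue with the orthogonal projectors onto the ranges instead of simultaneously diagonalizing the \(A_i\).
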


\begin{proof}
  The proof of~`\(\leq\)' in~\eqref{eq:dim-normal-cone-from-Null}
  follows from Proposition~\ref{prop:conic-lp-normal-cone}
  and~\eqref{eq:conjugate-face-sdp-dim}.

  Now we prove the reverse inequality. We shall use the fact that,
  \begin{claimeq}
    \label{eq:dim-normal-cone-from-Null-aux1}
    if \(L \in \Reals^{n \times n}\) is nonsingular, then the
    hypotheses and conclusion of the result hold if and only if they
    also hold if \(\Acal\) is replaced with \(\Acal_L \coloneqq \Acal
    \compose \CongMap{L}^{-1}\) and \(\ConvexSet\) is replaced with
    \(\ConvexSet_L \coloneqq \CongMap{L}(\ConvexSet)\).
  \end{claimeq}
  Note that \(\rank\paren[\big]{\sum_{i=1}^m \Acal_L^*(e_i)} =
  \rank\paren[\big]{\sum_{i=1}^m \Acal^*(e_i)} = \sum_{i=1}^m
  \rank\paren[\big]{ \Acal^*(e_i)} = \sum_{i=1}^m \rank\paren[\big]{
    \Acal_L^*(e_i)}\) since \(\rank(\Acal_L^*(x)) =
  \rank\paren{L^{-\transp} \Acal^*(x) L^{-1}} = \rank(\Acal^*(x))\)
  for each \(x \in \Reals^m\). Together with
  Lemma~\ref{lemma:invariance}, this
  proves~\eqref{eq:dim-normal-cone-from-Null-aux1}.

  Let us prove that
  \begin{equation}
    \label{eq:dim-normal-cone-from-Null-aux2}
    \text{
      we may assume that \(A_i A_j = 0\) whenever \(i,j \in [m]\) are
      distinct.
    }
  \end{equation}
  For each \(i \in [m]\), set \(r_i \coloneqq \rank(A_i)\) and let
  \(B_i \in \Reals^{n \times r_i}\) with full column-rank and
  \(\Image(A_i) = \Image(B_i)\). Set \(r \coloneqq \sum_{i=1}^m r_i\).
  Then the \(n \times r\) matrix
  \begin{equation*}
    B
    \coloneqq
    \begin{bmatrix}
      B_1 & \dotsm & B_m
    \end{bmatrix}
  \end{equation*}
  has full column-rank, since our hypothesis and the relation
  \(\Image\paren{\sum_{i=1}^m A_i} \subseteq \sum_{i=1}^m \Image(A_i)
  = \sum_{i=1}^m \Image(B_i) = \Image(B)\) imply that
  \begin{equation*}
    r
    =
    \textstyle\sum_{i=1}^m r_i
    =
    \rank\paren[\big]{
      \textstyle\sum_{i=1}^m
      A_i
    }
    =
    \dim\paren*{
      \Image\paren[\big]{
        \textstyle\sum_{i=1}^m
        A_i
      }
    }
    \leq
    \dim\paren*{
      \Image(B)
    }
    =
    \rank(B).
  \end{equation*}
  Thus, there exists a nonsingular \(L \in \Reals^{n \times n}\) such
  that \(LB = \sum_{k=1}^r \oprodsym{e_k}\). If \(i,j \in [m]\) are
  distinct, then \(\Image(LB_i) \perp \Image(LB_j)\) holds and so does
  \(\Image\paren{\CongMap{L}(A_i)} \perp
  \Image\paren{\CongMap{L}(A_j)}\). Thus, \(\CongMap{L}(A_i)
  \CongMap{L}(A_j) = 0\) whenever \(i,j \in [m]\) are distinct. Thus,
  by replacing \(\Acal\) with \(\Acal \compose
  \CongMap{L^{-\transp}}^{-1}\) and
  applying~\eqref{eq:dim-normal-cone-from-Null-aux1}, this
  proves~\eqref{eq:dim-normal-cone-from-Null-aux2}.

  Next we shall refine~\eqref{eq:dim-normal-cone-from-Null-aux2}
  and show that
  \begin{claimeq}
    \label{eq:dim-normal-cone-from-Null-aux3}
    we may assume that \(A_i = \Diag(a_i)\) for each \(i \in [m]\),
    where \(a_1, \dotsc, a_m \in \Reals^n\) are vectors with pairwise
    disjoint support.
  \end{claimeq}
  Since the matrices \(A_1, \dotsc, A_m\) pairwise commute
  by~\eqref{eq:dim-normal-cone-from-Null-aux2}, there exists \(P \in
  \OOrtho(n)\) such that \(P^{\transp} A_i P\) is diagonal for each \(i
  \in [m]\). Let \(a_i \in \Reals^n\) such that \(A_i = P \Diag(a_i)
  P^{\transp}\) for each \(i \in [m]\). For distinct \(i, j \in [m]\),
  we have \(0 = P^{\transp} (A_i A_j) P = \Diag(a_i) \Diag(a_j)\),
  whence \(\supp(a_i) \cap \supp(a_j) = \emptyset\). Thus, by
  replacing \(\Acal\) with \(\Acal \compose \CongMap{P^{-1}}^{-1}\)
  and applying~\eqref{eq:dim-normal-cone-from-Null-aux1}, this
  proves~\eqref{eq:dim-normal-cone-from-Null-aux3}.

  Let \(\Xb \in \ConvexSet\), let \(\set{R_1,\dotsc,R_p}\) be a basis
  of \(\Sym{n-\rank(\Xb)}\), and let \(Q \in \OOrtho(n)\) such that
  \(\Xb = Q \Diag(\lambda) Q^{\transp}\), where \(\lambda \coloneqq
  \lambda^{\downarrow}(\Xb)\). To prove~`\(\geq\)'
  in~\eqref{eq:dim-normal-cone-from-Null}, it suffices by
  Proposition~\ref{prop:conic-lp-normal-cone} to show that the set of
  matrices \(\set{A_1, \dotsc, A_m} \cup \set{ Q (0 \oplus R_1)
    Q^{\transp}, \dotsc, Q (0 \oplus R_p) Q^{\transp}}\) is linearly
  independent. Let \(\alpha \in \Reals^m\) and \(\beta \in \Reals^p\)
  such that
  \begin{equation}
    \label{eq:dim-normal-cone-from-Null-aux4}
    \sum_{i=1}^m \alpha_i A_i
    +
    \sum_{j=1}^p \beta_j Q (0 \oplus R_j) Q^{\transp}
    =
    0.
  \end{equation}

  Let \(u \in \Null(\Xb)^{\perp}\). Then \(Q^{\transp} u \in
  Q^{\transp}\Image(\Xb) = \Image(Q^{\transp} \Xb) =
  \Image\paren[\big]{\Diag(\lambda)Q^{\transp}} \subseteq
  \Image\paren[\big]{\Diag(\lambda)}\), whence \(\supp(Q^{\transp} u)
  \subseteq [\rank(\Xb)]\). Thus, if we
  multiply~\eqref{eq:dim-normal-cone-from-Null-aux4} on the right
  by~\(u\) we obtain \(u \in \Null\paren[\big]{\sum_{i=1}^m \alpha_i
    A_i}\). So \(\Null(\Xb)^{\perp} \subseteq
  \Null\paren[\big]{\sum_{i=1}^m \alpha_i A_i}\), or equivalently
  \begin{equation*}
    \Image\paren[\big]{\textstyle \sum_{i=1}^m \alpha_i A_i}
    \subseteq
    \Null(\Xb).
  \end{equation*}
  Let \(k \in [m]\). Then
  from~\eqref{eq:dim-normal-cone-from-Null-aux3} we have
  \(\Image(\alpha_k A_k) \subseteq \Image(\sum_{i=1}^m \alpha_i A_i)
  \subseteq \Null(\Xb)\) so \(\alpha_k \Xb A_k = 0\). Since \(0 \neq
  b_k = \iprod{A_k}{\Xb} = \trace(A_k \Xb)\), we have \(\Xb A_k \neq
  0\), so it must be the case that \(\alpha_k = 0\). This proves that
  \(\alpha = 0\), whence \(\beta = 0\). This concludes the proof
  of~\eqref{eq:dim-normal-cone-from-Null}.
\end{proof}

Let \(V\) be a finite set. The set
\begin{equation*}
  \Elliptope{V}
  \coloneqq
  \setst[\big]{
    X \in \Psd{V}
  }{
    \diag(X) = \ones
  },
\end{equation*}
known as the \emph{elliptope}, is a well-known relaxation for the
\emph{cut polytope} \(\conv\setst[\big]{\oprodsym{x}}{x \in
  \set{\pm1}^V}\). The SDP used by Goemans and
Williamson~\cite{GoemansW95a} in their celebrated approximation
algorithm for \MaxCut\ has \(\Elliptope{V}\) as its feasible region
when applied to a graph on~\(V\). When we apply
Theorem~\ref{thm:normal-cone-modular-rank} to the
elliptope~\(\Elliptope{V}\), we find that a point \(\Xb\)
of~\(\Elliptope{V}\) is a vertex of~\(\Elliptope{V}\) precisely when
\(\dim\paren[\big]{\Null(\Xb)} = \card{V} - 1\):
\begin{corollary}[\cite{LaurentP95a}]
  \label{cor:elliptope-vertices}
  Let \(V\) be a finite set. Then a point \(\Xb\) of~\(\Elliptope{V}\)
  is a vertex of~\(\Elliptope{V}\) if and only if \(\rank(\Xb) = 1\).
  Thus, the vertices of~\(\Elliptope{V}\) are precisely the matrices
  of the form \(\oprodsym{x}\) with \(x \in \set{\pm1}^V\).
\end{corollary}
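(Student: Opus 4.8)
The plan is to derive Corollary~\ref{cor:elliptope-vertices} directly from Theorem~\ref{thm:normal-cone-modular-rank} applied to the elliptope. First I would set up the linear map: take \(\Acal = \diag \ffrom \Sym{V} \fto \Reals^V\), so that \(\Elliptope{V} = \setst{X \in \Psd{V}}{\Acal(X) = \ones}\), and the right-hand side vector is \(b = \ones\), which has full support. I would then check the two hypotheses of the theorem. For the Slater condition, \(I \in \Elliptope{V} \cap \Pd{V}\), so \(\Elliptope{V} \cap \Pd{V} \neq \emptyset\). For the rank condition, \(A_i = \Acal^*(e_i) = \Diag(e_i) = \oprodsym{e_i}\), so each \(A_i\) has rank~\(1\), and \(\sum_{i \in V} A_i = I\) has rank \(\card{V} = \sum_{i \in V} \rank(A_i)\); hence the modular rank hypothesis holds.

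With the theorem in force, for any \(\Xb \in \Elliptope{V}\) we get
\begin{equation*}
  \dim\paren[\big]{\NormalCone{\Elliptope{V}}{\Xb}}
  =
  \dim\paren[\big]{\Image(\diag^*)}
  +
  \binom{\dim\paren[\big]{\Null(\Xb)} + 1}{2}
  =
  \card{V}
  +
  \binom{\card{V} - \rank(\Xb) + 1}{2},
\end{equation*}
using that \(\diag^* = \Diag\) is injective so its image has dimension \(\card{V}\), and \(\dim(\Null(\Xb)) = \card{V} - \rank(\Xb)\). The ambient space \(\Sym{V}\) has dimension \(\binom{\card{V}+1}{2}\). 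Thus \(\Xb\) is a vertex if and only if \(\card{V} + \binom{\card{V} - \rank(\Xb) + 1}{2} = \binom{\card{V}+1}{2}\), i.e.\ \(\binom{\card{V} - \rank(\Xb) + 1}{2} = \binom{\card{V}+1}{2} - \card{V} = \binom{\card{V}}{2}\). Since \(t \mapsto \binom{t+1}{2}\) is strictly increasing in \(t \geq 0\), this forces \(\card{V} - \rank(\Xb) = \card{V} - 1\), i.e.\ \(\rank(\Xb) = 1\). (One should note \(\rank(\Xb) \geq 1\) since the diagonal of \(\Xb\) is \(\ones \neq 0\), so the range \(\card{V} - \rank(\Xb) \in \set{0,\dotsc,\card{V}-1}\) is exactly where the strict monotonicity argument applies.)

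For the second assertion, I would characterize the rank-one matrices in \(\Elliptope{V}\): if \(\Xb = \oprodsym{x}\) with \(\Xb \succeq 0\), then \(\diag(\Xb) = \ones\) says \(x_i^2 = 1\) for all \(i\), i.e.\ \(x \in \set{\pm 1}^V\); conversely every such \(\oprodsym{x}\) is positive semidefinite of rank one with all-ones diagonal. Combining with the vertex characterization gives the claim. I do not anticipate a serious obstacle here — the entire argument is a bookkeeping exercise once Theorem~\ref{thm:normal-cone-modular-rank} is available; the only point requiring a moment's care is the strict monotonicity of \(\binom{t+1}{2}\) together with the observation that \(\rank(\Xb) \geq 1\), which pins down the rank exactly rather than merely bounding it.
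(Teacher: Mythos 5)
Your proposal is correct and follows essentially the same route as the paper: the corollary is presented there precisely as the application of Theorem~\ref{thm:normal-cone-modular-rank} to \(\Elliptope{V}\) with \(\Acal = \diag\) and \(b = \ones\), yielding that \(\Xb\) is a vertex exactly when \(\dim\paren[\big]{\Null(\Xb)} = \card{V}-1\), i.e.\ \(\rank(\Xb) = 1\). Your verification of the Slater and modular rank hypotheses and the identification of the rank-one points of the elliptope with \(\oprodsym{x}\), \(x \in \set{\pm1}^V\), match the intended bookkeeping.
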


In the proof of Corollary~\ref{cor:elliptope-vertices} by Laurent and
Poljak~\cite{LaurentP95a}, the fact that \(\oprodsym{\xb}\) is a
vertex of~\(\Elliptope{n}\) if \(\xb \in \set{\pm1}^n\) follows from
the simple observation that \(\setst{(-1)^{\Iverson{\xb_i \xb_j <
      0}}\Symmetrize(\oprod{e_i}{e_j})}{i,j \in [n]} \subseteq
\NormalCone{\Elliptope{n}}{\oprodsym{\xb}}\). For the proof that all
vertices of~\(\Elliptope{n}\) have rank one, Laurent and Poljak give
the following argument, which we include for the sake of completeness:
\begin{proposition}[\cite{LaurentP95a}]
  \label{prop:free-offdiag-rank-bound}
  Let \(\Acal \ffrom \Sym{n} \fto \Reals^m\) and \(b \in \Reals^m\)
  and set \(\ConvexSet \coloneqq \setst{X \in \Psd{n}}{\Acal(X) =
    b}\). Suppose that \(\ConvexSet \cap \Pd{n} \neq \emptyset\).
  Suppose that for some \(k \in [n-1]\) there exists a linearly
  independent subset \(\set{h_0} \cup \setst{h_i}{i \in [k]}\)
  of~\(\Reals^n\) such that
  \(\linspan\setst{\Symmetrize(\oprod{h_0}{h_i})}{i \in [k]} \subseteq
  \Null(\Acal)\). Then every vertex of~\(\ConvexSet\) has rank \(\leq
  n - k\).
\end{proposition}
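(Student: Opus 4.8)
The plan is to use the dual description of the normal cone from Proposition~\ref{prop:conic-lp-normal-cone}, specialized to the case $\Cone = \Psd{n}$ and with no inequality constraints, so that for $\Xb \in \ConvexSet$ we have
\begin{equation*}
  \NormalCone{\ConvexSet}{\Xb}
  =
  \Image(\Acal^*)
  -
  \paren[\big]{\Psd{n} \cap \set{\Xb}^{\perp}}.
\end{equation*}
Since $\Xb$ is a vertex iff this set is all of $\Sym{n}$, and since $\Image(\Acal^*)$ is orthogonal-complement to $\Null(\Acal)$, the vertex condition forces a lower bound on $\dim\paren[\big]{\Psd{n} \cap \set{\Xb}^{\perp}}$ that must be compensated by the size of $\Null(\Acal)$ being \emph{small}. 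Concretely, I would first observe that $\Xb$ being a vertex implies
\begin{equation*}
  \Null(\Acal)
  \subseteq
  \Image(\Acal^*)^{\perp}
  \cap \Sym{n}
  \subseteq
  \linspan\paren[\big]{\Psd{n} \cap \set{\Xb}^{\perp}},
\end{equation*}
because any direction in $\Null(\Acal)$ that is \emph{not} in the span of the conjugate face cannot be cancelled and would make the normal cone not full-dimensional. Hence every matrix in $\Null(\Acal)$ is a linear combination of rank-one matrices $\oprodsym{b}$ with $b \in \Null(\Xb)$ (using \eqref{eq:conjugate-face-sdp-cone}), i.e. $\Null(\Acal) \subseteq \Sym{\Null(\Xb)}$ after an appropriate identification.

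The key step is then the following: the hypothesis produces a linearly independent set $\set{h_0} \cup \setst{h_i}{i \in [k]}$ with $\Symmetrize(\oprod{h_0}{h_i}) \in \Null(\Acal)$ for $i \in [k]$. By the containment just established, each $\Symmetrize(\oprod{h_0}{h_i})$ lies in $\linspan\setst{\oprodsym{b}}{b \in \Null(\Xb)}$. I would argue that this forces $h_0 \in \Null(\Xb)$ and also $h_1, \dotsc, h_k \in \Null(\Xb)$. The mechanism is a rank/range argument: after a congruence (invoking Lemma~\ref{lemma:invariance} to preserve all relevant quantities), we may assume $\Null(\Xb) = \linspan\set{e_{r+1}, \dotsc, e_n}$ where $r = \rank(\Xb)$, so that $\Psd{n} \cap \set{\Xb}^{\perp} = 0 \oplus \Psd{n-r}$ and $\linspan\paren[\big]{\Psd{n}\cap\set{\Xb}^{\perp}} = 0 \oplus \Sym{n-r}$. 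A symmetric matrix $\Symmetrize(\oprod{h_0}{h_i})$ lies in $0 \oplus \Sym{n-r}$ precisely when both $h_0$ and $h_i$ have support contained in $\set{r+1,\dotsc,n}$, unless one of them is zero — and here linear independence rules that out. Thus $h_0, h_1, \dotsc, h_k \in \Null(\Xb)$, giving $k+1$ linearly independent vectors in $\Null(\Xb)$, so $\dim\paren[\big]{\Null(\Xb)} \geq k+1$, i.e. $\rank(\Xb) \leq n - k - 1 \leq n-k$.

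Wait — I should double-check the exact bound claimed ($\rank(\Xb) \le n-k$ versus $n-k-1$); the argument above seems to give the slightly stronger $n-k-1$, which is consistent with (and implies) the stated conclusion, so there is no problem. The main obstacle I anticipate is the careful handling of the claim that a symmetric matrix of the form $\Symmetrize(\oprod{u}{v})$ lies in $0 \oplus \Sym{n-r}$ only if $u$ and $v$ are both supported on $\set{r+1,\dotsc,n}$ (given $u,v \neq 0$): one must inspect the block structure, noting that the $(1,1)$-block of $\Symmetrize(\oprod{u}{v})$ is $\Symmetrize(\oprod{u_{[r]}}{v_{[r]}})$ and the off-diagonal block involves $u_{[r]}v_{[r+1:n]}^{\transp}$ etc., and then deduce $u_{[r]} = 0$ or $v_{[r]} = 0$, followed by a symmetric argument for the complementary block. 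A clean alternative that avoids case analysis is to pair the identity \eqref{eq:dim-normal-cone-from-Null} style reasoning with \eqref{eq:conjugate-face-sdp-cone} directly: since $\Null(\Acal) \ni \Symmetrize(\oprod{h_0}{h_i})$ and $\Null(\Acal) \subseteq \linspan\setst{\oprodsym{b}}{b \in \Null(\Xb)}$, and the latter span equals $\setst{S \in \Sym{n}}{\Image(S) \subseteq \Null(\Xb)}$, we conclude $\Image\paren[\big]{\Symmetrize(\oprod{h_0}{h_i})} \subseteq \Null(\Xb)$, so in particular $h_0 \in \Null(\Xb)$ (taking $i$ with $\iprod{h_0}{h_i} \ne 0$, or perturbing within the span) and likewise all $h_i \in \Null(\Xb)$; this is the route I would actually write up.
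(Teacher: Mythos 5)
The central step of your argument --- the containment \(\Null(\Acal) \subseteq \linspan\paren[\big]{\Psd{n} \cap \set{\Xb}^{\perp}}\) for a vertex \(\Xb\) --- is the gap: it does not follow from full-dimensionality of the normal cone. Full-dimensionality of \(\NormalCone{\ConvexSet}{\Xb} = \Image(\Acal^*) - \paren[\big]{\Psd{n} \cap \set{\Xb}^{\perp}}\) says exactly that
\begin{equation*}
  \Image(\Acal^*) + \linspan\paren[\big]{\Psd{n} \cap \set{\Xb}^{\perp}} = \Sym{n},
  \qquad\text{equivalently}\qquad
  \Null(\Acal) \cap \sqbrac[\big]{\linspan\paren[\big]{\Psd{n} \cap \set{\Xb}^{\perp}}}^{\perp} = \set{0},
\end{equation*}
and two subspaces whose sum is the whole space need not satisfy ``the orthogonal complement of one lies inside the other.'' The elliptope already refutes your containment: at the vertex \(\Xb = \oprodsym{(1,1)^{\transp}}\) of \(\Elliptope{2}\) one has \(\Null(\Acal) = \linspan\set{\Symmetrize(\oprod{e_1}{e_2})}\), while \(\linspan\paren[\big]{\Psd{2} \cap \set{\Xb}^{\perp}} = \linspan\set{\oprodsym{(1,-1)^{\transp}}}\), and the former is not contained in the latter. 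The same example falsifies the conclusion you draw from it (that \(h_0, h_1, \dotsc, h_k \in \Null(\Xb)\), hence \(\rank(\Xb) \leq n-k-1\)): for \(\Elliptope{n}\) with \(h_0 = e_n\), \(h_i = e_i\), \(k = n-1\), the vertices have rank exactly \(1 = n-k\), not \(\leq 0\). So the ``slightly stronger bound'' you flagged was not a harmless bonus but a symptom that the argument proves too much; your ``clean alternative'' at the end rests on the same false containment, so it fails for the same reason.

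The correct route (the paper's) uses only the weaker, correct consequence of full-dimensionality. After a congruence (Lemma~\ref{lemma:invariance}) one may assume \(h_0 = e_n\) and \(h_i = e_i\) for \(i \in [k]\). Project the normal cone orthogonally onto \(\Null(\Acal)\): the \(\Image(\Acal^*)\) part is killed, so full-dimensionality forces the projections \(\Proj{\Null(\Acal)}\paren[\big]{\oprodsym{b}}\), \(b \in \Null(\Xb)\), to span \(\Null(\Acal)\), say of dimension \(d\). Because \(\Symmetrize(\oprod{e_n}{e_i}) \in \Null(\Acal)\) for \(i \in [k]\), this projection preserves the \((i,n)\) entries for \(i \in [k]\); reading off those coordinates of \(d\) spanning projections \(\Proj{\Null(\Acal)}\paren[\big]{\oprodsym{b_j}}\) and selecting \(k\) of them with linearly independent such coordinate vectors yields \(b_1, \dotsc, b_k \in \Null(\Xb)\) with \(\sqbrac{b_j}_n \neq 0\) and linearly independent restrictions to \([k]\). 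Hence \(\dim\paren[\big]{\Null(\Xb)} \geq k\) and \(\rank(\Xb) \leq n-k\) --- and no more; nothing forces \(e_n, e_1, \dotsc, e_k\) themselves into \(\Null(\Xb)\).
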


\begin{proof}
  We first show that,
  \begin{claimeq}
    \label{eq:free-offdiag-rank-bound-aux1}
    if \(L \in \Reals^{n \times n}\) is nonsingular, then the
    hypotheses and conclusion of the result hold if and only if they
    also hold if \(\Acal\) is replaced with \(\Acal_L \coloneqq \Acal
    \compose \CongMap{L}^{-1}\) and \(\ConvexSet\) is replaced with
    \(\ConvexSet_L \coloneqq \CongMap{L}(\ConvexSet)\).
  \end{claimeq}
  Note that linear independence of \(\set{h_0} \cup \setst{h_i}{i \in
    [k]} \subseteq \Reals^n\) is equivalent to that of \(\set{L h_0}
  \cup \setst{L h_i}{i \in [k]}\), and the inclusion
  \(\linspan\setst{\Symmetrize(\oprod{h_0}{h_i})}{i \in [k]} \subseteq
  \Null(\Acal)\) is equivalent to \(\linspan\setst{\Symmetrize(L
    \oprod{h_0}{h_i} L^{\transp})}{i \in [k]} =
  \CongMap{L}\paren[\big]{
    \linspan\setst{\Symmetrize(\oprod{h_0}{h_i})}{i \in [k]}}
  \subseteq \Null(\Acal \compose \CongMap{L}^{-1}) = \Null(\Acal_L)\).
  The proof of~\eqref{eq:free-offdiag-rank-bound-aux1} follows from
  these facts together with Lemma~\ref{lemma:invariance}.

  By applying~\eqref{eq:free-offdiag-rank-bound-aux1} with \(L \in
  \Reals^{n \times n}\) nonsingular such that \(L h_0 = e_n\) and \(L
  h_i = e_i\) for \(i \in [k]\),
  \begin{equation*}
    \text{
      we may assume that \(h_0 = e_n\) and
      \(h_i = e_i\) for all \(i \in [k]\).
    }
  \end{equation*}
  Set \(d \coloneqq \dim(\Null(\Acal))\). Let \(\Proj{\Null(\Acal)}
  \ffrom \Sym{n} \fto \Sym{n}\) denote the orthogonal projection
  onto~\(\Null(\Acal)\). Since the elements of
  \(\setst{\Symmetrize(\oprod{e_n}{e_i})}{i \in [k]} \subseteq
  \Null(\Acal)\) are pairwise orthogonal, we have
  \begin{equation}
    \label{eq:free-offdiag-rank-bound-aux2}
    \Proj{\Null(\Acal)}\paren[\big]{
      \Symmetrize(\oprod{e_n}{e_i})
    }
    =
    \Symmetrize(\oprod{e_n}{e_i})
    \qquad
    \forall i \in [k]
  \end{equation}
  and
  \begin{equation}
    \label{eq:free-offdiag-rank-bound-aux3}
    \text{
      there is a linear isomorphism \(\varphi \ffrom \Null(\Acal) \fto
      \Reals^{d}\) such that \(\sqbrac{\varphi(X)}_i = X_{in}\) for
      all \(i \in [k]\).
    }
  \end{equation}

  Let \(\Xb\) be a vertex of~\(\ConvexSet\). By
  Proposition~\ref{prop:conic-lp-normal-cone}
  and~\eqref{eq:conjugate-face-sdp-cone}, we have
  \begin{equation*}
    \NormalCone{\ConvexSet}{\Xb}
    =
    \Image(\Acal^*) - \paren{\Psd{n} \cap \set{\Xb}^{\perp}}
    =
    \Image(\Acal^*) - \cone\setst[\big]{\oprodsym{b}}{b \in \Null(\Xb)}.
  \end{equation*}
  Then
  \begin{equation*}
    \Proj{\Null(\Acal)}\paren[\big]{
      \NormalCone{\ConvexSet}{\Xb}
    }
    =
    -\cone\setst[\big]{
      \Proj{\Null(\Acal)}\paren{
        \oprodsym{b}
      }
    }{
      b \in \Null(\Xb)
    }
  \end{equation*}
  has dimension \(d\). Hence, there exists a set \(\setst{b_j}{j \in
    [d]} \subseteq \Null(\Xb)\) such that, if we define \(B_j
  \coloneqq \oprodsym{b_j}\) for \(j \in [d]\), the set
  \(\setst{\Proj{\Null(\Acal)}\paren{B_j}}{j \in [d]}\) is linearly
  independent. So the \(d \times d\) matrix~\(M\) whose \(j\)th column
  is \(\varphi\paren{\Proj{\Null(\Acal)}\paren{B_j}}\) is nonsingular,
  and its submatrix \(M_1 \coloneqq M[[k],[d]]\) has \(k\) linearly
  independent columns. By possibly relabeling the~\(B_j\)'s, we may
  assume that the first \(k\) columns of~\(M_1\) are linearly
  independent, i.e.,
  \begin{equation*}
    \setst[\big]{
      \sqbrac{b_j}_{n}
      \paren[\big]{
        b_j \restriction_{[k]}
      }
    }{
      j \in [k]
    }
    =
    \setst[\big]{
      \varphi\paren{
        \Proj{\Null(\Acal)}\paren{
          B_j
        }
      }
      \restriction_{[k]}
    }{
      j \in [k]
    }
    \text{ is linearly independent},
  \end{equation*}
  where the first equation follows
  from~\eqref{eq:free-offdiag-rank-bound-aux2}
  and~\eqref{eq:free-offdiag-rank-bound-aux3} since
  \(\sqbrac{\varphi\paren{\Proj{\Null(\Acal)}(B_j)}}_i
  = \paren{\Proj{\Null(\Acal)}(B_j)}_{in} = \sqbrac{B_j}_{in} =
  \sqbrac{b_j}_n \sqbrac{b_j}_i\) for each \(i \in [k]\). In
  particular, \(\sqbrac{b_j}_n \neq 0\) for each \(j \in [k]\) and
  \(\setst{b_j}{j \in [k]}\) is linearly independent. Since \(b_j \in
  \Null(\Xb)\) for each \(j \in [k]\), we get \(\rank(\Xb) \leq n -
  k\).
\end{proof}

When Proposition~\ref{prop:free-offdiag-rank-bound} is applied
to~\(\Elliptope{n}\) in the proof of
Corollary~\ref{cor:elliptope-vertices} with \(h_0 \coloneqq e_n\) and
\(h_i \coloneqq e_i\) for each \(i \in [n-1]\), we find again that
each vertex of~\(\Elliptope{n}\) is rank-one. However, the bound
provided by Proposition~\ref{prop:free-offdiag-rank-bound} may be
quite weak: the set \(\ConvexSet \coloneqq \setst{X \in
  \Psd{n}}{\iprod{\Symmetrize(\oprod{e_i}{e_j})}{X} = 0\,\forall ij
  \in \tbinom{[n]}{2}}\) has a unique vertex (and extreme point) and
its rank is~\(0\), whereas the best upper bound that
Proposition~\ref{prop:free-offdiag-rank-bound} yields in this case is
\(n-1\). Still, with the same application as for the elliptope,
Proposition~\ref{prop:free-offdiag-rank-bound} yields the following
unexpected fact:
\begin{corollary}
  \label{cor:smash-to-rank-one}
  Let \(\Acal \ffrom \Sym{n} \fto \Reals^m\) be a linear map. Let \(b
  \in \Reals^m\). Define \(\ConvexSet \coloneqq \setst{X \in
    \Psd{n}}{\Acal(X) = b}\). Suppose that \(\ConvexSet \cap \Pd{n} \neq
  \emptyset\). Then every vertex of \(\setst[\big]{\Xh \in
    \Psd{\setlift{[n]}}}{\Xh[[n]] \in \ConvexSet}\) has rank one.
\end{corollary}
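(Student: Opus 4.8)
The plan is to reduce the rank-one claim for vertices of the lifted set $\widehat{\mathcal K}\coloneqq\setst{\Xh\in\Psd{\setlift{[n]}}}{\Xh[[n]]\in\ConvexSet}$ to a direct application of Proposition~\ref{prop:free-offdiag-rank-bound}, working in $\Sym{\setlift{[n]}}$, i.e.\ in dimension $n+1$ where the index set is $\set{0}\cup[n]$. First I would express $\widehat{\mathcal K}$ in the standard form $\setst{\Xh\in\Psd{\setlift{[n]}}}{\widehat{\Acal}(\Xh)=\widehat b}$: take $\widehat{\Acal}$ to collect the constraints $\Acal(\Xh[[n]])=b$ together with no constraints on the row/column indexed by~$0$, so that $\Null(\widehat{\Acal})$ contains every symmetric matrix supported on the ``$0$'' row and column. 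Concretely, $\Symmetrize(\oprod{e_0}{e_i})\in\Null(\widehat{\Acal})$ for each $i\in[n]$, and also $\oprodsym{e_0}\in\Null(\widehat{\Acal})$; more importantly the whole subspace of matrices with zero $[n]\times[n]$ block lies in $\Null(\widehat{\Acal})$.

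Next I would verify the Slater condition for $\widehat{\mathcal K}$: given $\Xb\in\ConvexSet\cap\Pd{n}$, the matrix $\begin{bmatrix}1&0\\0&\Xb\end{bmatrix}\in\Psd{\setlift{[n]}}$ is positive definite and lies in $\widehat{\mathcal K}$, so $\widehat{\mathcal K}\cap\Pd{\setlift{[n]}}\neq\emptyset$ by the hypothesis $\ConvexSet\cap\Pd{n}\neq\emptyset$. Then I would apply Proposition~\ref{prop:free-offdiag-rank-bound} with ambient dimension $n+1$, with $h_0\coloneqq e_0$ and $h_i\coloneqq e_i$ for $i\in[n]$, so $k=n$. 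The set $\set{h_0}\cup\setst{h_i}{i\in[n]}$ is a basis of $\Reals^{\setlift{[n]}}$, hence linearly independent, and $\linspan\setst{\Symmetrize(\oprod{e_0}{e_i})}{i\in[n]}\subseteq\Null(\widehat{\Acal})$ by construction of $\widehat{\Acal}$. Proposition~\ref{prop:free-offdiag-rank-bound} then yields that every vertex of $\widehat{\mathcal K}$ has rank $\leq (n+1)-n=1$, which is the claim.

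The only real point requiring care is making precise that the constraint map $\widehat{\Acal}$ defining $\widehat{\mathcal K}$ genuinely imposes nothing on the entries involving index~$0$, so that the off-diagonal ``free'' directions $\Symmetrize(\oprod{e_0}{e_i})$ really do lie in its nullspace; this is immediate once $\widehat{\Acal}$ is taken to be $\Xh\mapsto\Acal(\Xh[[n]])$, since that map factors through the projection $\Xh\mapsto\Xh[[n]]$ and therefore annihilates every matrix whose $[n]\times[n]$ block vanishes. I do not anticipate a genuine obstacle here; the statement is essentially the $k=n-1$ case of Proposition~\ref{prop:free-offdiag-rank-bound} (as already used for the elliptope) transplanted from $\Sym{n}$ to $\Sym{\setlift{[n]}}$, with the extra coordinate~$0$ supplying exactly one more independent ``free off-diagonal'' direction than the dimension count can absorb without forcing the rank down to~$1$.
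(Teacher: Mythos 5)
Your proposal is correct and is essentially the paper's own argument: the paper proves this corollary precisely by applying Proposition~\ref{prop:free-offdiag-rank-bound} in \(\Sym{\setlift{[n]}}\) ``with the same application as for the elliptope,'' i.e.\ with \(h_0 = e_0\), \(h_i = e_i\) for \(i \in [n]\), \(k = n\), using that the constraint map \(\Xh \mapsto \Acal(\Xh[[n]])\) annihilates the row and column indexed by \(0\), which is exactly what you spelled out (including the Slater point \(1 \oplus \Xb\), which the paper leaves implicit). Note only that, as in the paper, the proposition literally gives rank \(\leq (n+1) - n = 1\) rather than rank exactly one, so your conclusion matches the paper's at the same level of precision.
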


\subsection{Vertices of the Theta Body}

We briefly recall some basic results about the theta body of a graph
and its lifted version, with emphasis on the boundary structure of the
former in the setting of antiblocking duality.

Let \(V\) be a finite set. Define the map
\begin{equation}
  \label{eq:bool-quad-map-def1}
  \BoolQuadMap{\setlift{V}}
  \ffrom
  \Xh \in \Sym{\setlift{V}}
  \mapsto
  \BoolQuadMap{\set{0}}(\Xh)
  \oplus
  \BoolQuadMap{V}(\Xh)
  \in
  \Reals^{\set{0}}
  \oplus
  \Reals^V
\end{equation}
where \(\BoolQuadMap{\set{0}} \ffrom \Sym{\setlift{V}} \fto
\Reals^{\set{0}}\) and \(\BoolQuadMap{V} \ffrom \Sym{\setlift{V}} \fto
\Reals^V\) are defined by
\begin{equation}
  \label{eq:bool-quad-map-def2}
  \BoolQuadMap{\set{0}}^*(e_0)
  \coloneqq
  \oprodsym{e_0}
  \qquad
  \text{and}
  \qquad
  \BoolQuadMap{V}^*(e_i)
  \coloneqq
  \Symmetrize\paren[\big]{
    \oprod{
      e_i
    }{
      (e_i - e_0)
    }
  }
  \quad
  \forall i \in V.
\end{equation}

Let \(G = (V,E)\) be a graph. The \emph{lifted theta body of~\(G\)} is
defined as
\begin{equation}
  \label{eq:def-lifted-theta-body}
  \liftedTH(G)
  \coloneqq
  \setst[\big]{
    \Xh \in \Psd{\setlift{V}}
  }{
    \BoolQuadMap{\setlift{V}}(\Xh) = 1 \oplus 0,\,
    \Acal_E(\Xh[V]) = 0
  },
\end{equation}
where \(\Acal_E \ffrom \Sym{V} \fto \Reals^E\) is defined by
\begin{equation}
  \label{eq:edge-entries-def}
  \Acal_E^*(e_{ij}) \coloneqq \Symmetrize(\oprod{e_i}{e_j})
  \qquad
  \forall ij \in E.
\end{equation}
The \emph{theta body of~\(G\)}, first introduced
in~\cite{GroetschelLS86a}, is the projection
\begin{equation*}
  \TH(G)
  \coloneqq
  \setst*{
    \diag\paren[\big]{
      \Xh[V]
    }
    \in \Reals^V
  }{
    \Xh \in \liftedTH(G)
  }.
\end{equation*}

The facets of~\(\TH(G)\) are defined precisely by the inequalities
\(x_i \geq 0\) for each \(i \in V\), and by the clique inequalities
\(\iprod{\incidvector{K}}{x} \leq 1\) for each clique~\(K\) of~\(G\);
see, e.g., \cite[Theorem~67.13]{Schrijver03b}. Consequently, as
mentioned by Shepherd~\cite{Shepherd01a}, the vertices of~\(\TH(G)\)
are precisely the incidence vectors of stable sets of~\(G\). This
follows from the formula \(\abl{\TH(G)} = \TH(\overline{G})\)
(see~\cite[Theorem~67.12]{Schrijver03b} for a proof) and the simple
duality correspondence between facets and vertices in antiblocking
pairs of convex corners, which may be stated as follows:
\begin{theorem}
  \label{thm:abl-facet-vertex}
  Let \(\ConvexSet \subseteq \Reals_+^V\) be a convex corner. Then:
  \begin{enumerate}[(i)]
  \item the facets of~\(\ConvexSet\) are determined precisely by the
    inequalities \(x_i \geq 0\), for each \(i \in V\), and
    \(\iprod{\bar{y}}{x} \leq 1\), where \(\bar{y}\) ranges over all
    vertices of \(\abl{\ConvexSet}\) such that
    \(\NormalCone{\Cscr}{\bar{y}} \cap \Reals_+^V\) is
    full-dimensional.
  \item if \(x\) is a vertex of~\(\Cscr\) with support~\(S\), then
    \(x_S \coloneqq x\restriction_S\) is a vertex of \(\Cscr_S
    \coloneqq \setst{z \in \Reals_+^S}{z \oplus 0 \in \Cscr}\) and
    \(\NormalCone{\Cscr_S}{x_S} \subseteq \Reals_+^S\).
  \end{enumerate}
\end{theorem}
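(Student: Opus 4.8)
The plan is to prove~(ii) first, since it uses only the definition of a convex corner, and then~(i), which rests on the classical facts that $\abl{\abl{\ConvexSet}} = \ConvexSet$ — equivalently, $\ConvexSet = \setst{x \in \Reals_+^V}{\iprod{y}{x} \le 1 \ \forall y \in \abl{\ConvexSet}}$ — and that $\abl{\ConvexSet}$ is again a convex corner (in particular compact with nonempty interior). These are the only external inputs.

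For~(ii), let $x$ be a vertex of $\ConvexSet$ with support $S$. Down-monotonicity gives $\setst{z \in \Reals_+^V}{0 \le z \le x} \subseteq \ConvexSet$, and since $\supp(x) = S$ this set has dimension $\card{S}$; hence $\ConvexSet_S$ is a compact, convex, down-monotone subset of $\Reals_+^S$ of dimension $\card{S}$, i.e.\ a convex corner. Restricting the inequalities defining $\NormalCone{\ConvexSet}{x}$ to points of the form $z \oplus 0$ with $z \in \ConvexSet_S$ shows that the coordinate restriction map $\pi_S \ffrom \Reals^V \fto \Reals^S$ sends $\NormalCone{\ConvexSet}{x}$ into $\NormalCone{\ConvexSet_S}{x_S}$; as the former is full-dimensional and $\pi_S$ is surjective, its image is full-dimensional, so $x_S$ is a vertex of $\ConvexSet_S$. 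Finally, if some $c \in \NormalCone{\ConvexSet_S}{x_S}$ had $c_j < 0$ for a $j \in S$, then (using $x_j > 0$) the point $x_S - \delta e_j$ lies in $\ConvexSet_S$ for small $\delta > 0$ while $\iprod{c}{x_S - \delta e_j} > \iprod{c}{x_S}$, a contradiction; thus $\NormalCone{\ConvexSet_S}{x_S} \subseteq \Reals_+^S$.

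For~(i), I first observe that each coordinate face $\ConvexSet \cap \setst{x}{x_i = 0}$ is a facet: $\ConvexSet$ contains a box $[0,\epsilon]^V$ for some $\epsilon > 0$ (down-monotonicity from a strictly positive interior point), so this face has dimension $\card{V} - 1$, and it is proper since interior points of $\ConvexSet$ are strictly positive. Conversely, given a facet $F$, pick $\xb \in \relint(F)$; since $\dim(F) = \card{V} - 1$, the cone $\NormalCone{\ConvexSet}{\xb}$ lies in the $1$-dimensional space $\paren{\operatorname{aff}(F) - \xb}^{\perp}$, and being nonzero (as $\xb \in \bd(\ConvexSet)$) but not a full line (as $\dim(\ConvexSet) = \card{V}$) it is a ray $\Reals_+ a$; consequently $F = \ConvexSet \cap \setst{x}{\iprod{a}{x} = \suppf{\ConvexSet}{a}}$. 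A short case analysis now shows $F$ is either a coordinate facet or of the required form: if $\suppf{\ConvexSet}{a} = 0$ then $a \le 0$ and $F = \ConvexSet \cap \setst{x}{x_i = 0\ \forall i \in \supp(a)}$, which forces $\card{\supp(a)} = 1$; if $\suppf{\ConvexSet}{a} > 0$ and $\bar y \coloneqq a/\suppf{\ConvexSet}{a}$ has a negative entry $\bar y_j$, then the perturbation argument of~(ii) shows $F \subseteq \setst{x}{x_j = 0}$, again a coordinate facet; otherwise $\bar y \in \abl{\ConvexSet}$ and $F = F_{\bar y}$, where $F_{\bar y} \coloneqq \ConvexSet \cap \setst{x}{\iprod{\bar y}{x} = 1}$.

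It remains to decide, for $\bar y \in \abl{\ConvexSet}$, when $F_{\bar y}$ is a facet; this is the crux, and I would settle it through the identity
\begin{equation*}
  \NormalCone{\abl{\ConvexSet}}{\bar y} \cap \Reals_+^V = \cone(F_{\bar y}).
\end{equation*}
For `$\supseteq$', every $x \in F_{\bar y}$ lies in $\ConvexSet = \abl{\abl{\ConvexSet}}$, is nonnegative, and satisfies $\iprod{\bar y}{x} = 1 = \max_{y' \in \abl{\ConvexSet}} \iprod{x}{y'}$, so $x \in \NormalCone{\abl{\ConvexSet}}{\bar y}$. For `$\subseteq$', if $0 \ne c \in \NormalCone{\abl{\ConvexSet}}{\bar y} \cap \Reals_+^V$ then $\nu \coloneqq \iprod{c}{\bar y} = \max_{y' \in \abl{\ConvexSet}} \iprod{c}{y'} > 0$ (positive because $\abl{\ConvexSet}$ has nonempty interior and $c \ge 0$, $c \ne 0$), and then $\iprod{c/\nu}{y'} \le 1$ for all $y' \in \abl{\ConvexSet}$ together with $c/\nu \ge 0$ gives $c/\nu \in \abl{\abl{\ConvexSet}} = \ConvexSet$; since $\iprod{\bar y}{c/\nu} = 1$, in fact $c/\nu \in F_{\bar y}$. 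As $0 \notin \operatorname{aff}(F_{\bar y})$ we have $\dim\paren{\cone(F_{\bar y})} = \dim(F_{\bar y}) + 1$, so $F_{\bar y}$ is a facet exactly when $\NormalCone{\abl{\ConvexSet}}{\bar y} \cap \Reals_+^V$ is full-dimensional — in which case $\NormalCone{\abl{\ConvexSet}}{\bar y}$ itself is full-dimensional, so $\bar y$ is automatically a vertex of $\abl{\ConvexSet}$. Combined with the previous paragraph this gives~(i). The main obstacle is precisely the displayed identity: one has to keep straight which of the two mutually antiblocking corners each inequality or normal direction lives over, and the decisive observation is that a nonnegative normal direction to $\abl{\ConvexSet}$ at $\bar y$, once rescaled, returns to $\ConvexSet$, turning the normal cone of $\abl{\ConvexSet}$ at $\bar y$ into a cone over a face of $\ConvexSet$ and hence ``$\bar y$ yields a facet'' into a dimension count.
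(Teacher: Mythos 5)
Your proof is correct, but note that the paper itself offers no proof of this theorem: it is stated as the standard facet--vertex correspondence for antiblocking pairs of convex corners, so there is no in-paper argument to compare against, and your self-contained derivation is a legitimate route. Part (ii) is a routine projection argument and is fine. In part (i) you have (correctly) resolved a notational ambiguity: the condition ``\(\NormalCone{\Cscr}{\bar y} \cap \Reals_+^V\) full-dimensional'' must refer to the normal cone of the antiblocker \(\abl{\ConvexSet}\) at \(\bar y\), since \(\bar y\) need not lie in \(\ConvexSet\) and the literal reading already fails for \(\ConvexSet = \setst{x \in \Reals_+^2}{x_1 + x_2 \leq 1}\); this is also the reading under which (i) and (ii) combine to give the intended application to \(\TH(G)\). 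The crux of your argument, the identity \(\NormalCone{\abl{\ConvexSet}}{\bar y} \cap \Reals_+^V = \cone\paren[\big]{F_{\bar y}}\) with \(F_{\bar y} = \setst{x \in \ConvexSet}{\iprod{\bar y}{x} = 1}\), is proved correctly in both directions (the positivity of \(\iprod{c}{\bar y}\) uses that \(\abl{\ConvexSet}\) has nonempty interior, which you noted), and the dimension count \(\dim\cone\paren[\big]{F_{\bar y}} = \dim\paren[\big]{F_{\bar y}} + 1\) is valid because \(F_{\bar y}\) lies in a hyperplane missing the origin; this converts the identity into the stated criterion, with the bonus observation that any \(\bar y \in \abl{\ConvexSet}\) whose inequality is facet-defining is automatically a vertex of \(\abl{\ConvexSet}\). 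Your only external inputs, \(\abl{\abl{\ConvexSet}} = \ConvexSet\) and the fact that \(\abl{\ConvexSet}\) is again a convex corner, are classical antiblocking duality and are an acceptable level of citation given that the paper cites the whole statement as known. The steps you gloss over --- that a facet is exposed by any nonzero normal at one of its relative-interior points, and that a face of \(\ConvexSet\) contained in a face of the same dimension equals it --- are standard convex geometry and do not constitute gaps.
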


Thus, as in the case of the elliptope via
Corollary~\ref{cor:elliptope-vertices}, the vertices of the theta
body~\(\TH(G)\) are the exact solutions for the problem for which
\(\TH(G)\) yields a relaxation. Our main result is the
corresponding statement in matrix space, i.e., for \(\liftedTH(G)\).
One may argue that the set \(\TH(G)\), which lives
in~\(\Reals^V\), is more natural to study, and indeed this is a good
argument if we only consider \(\TH(G)\) as a relaxation of the
stable set polytope. However, when one actually needs to optimize a
linear function over~\(\TH(G)\), the latter set is represented as
a projection of~\(\liftedTH(G)\), and the optimization takes place in
the ambient space~\(\Sym{\setlift{V}}\) of~\(\liftedTH(G)\).
Qualitatively, \(\liftedTH(G)\) is a spectrahedron, whereas
\(\TH(G)\) is the projection of a spectrahedron. These classes of
sets have quite different properties in general. We refer the reader
to the paper~\cite{ChuaT08a} for more details.

\subsection{Our Main Tool}

Now we prove the principal tool for our main result: a simple
algebraic expression for the dimension of the normal cone:
\begin{theorem}
  \label{thm:dim-normal-cone-sdp}
  Let \(\Acal \ffrom \Sym{n} \fto \Reals^p\) and \(\Bcal \ffrom
  \Sym{n} \fto \Reals^q\) be linear functions. Let \(a \in \Reals^p\)
  and \(b \in \Reals^q\). Set \(\ConvexSet \coloneqq \setst{X \in
    \Psd{n}}{\Acal(X) = a,\, \Bcal(X) \leq b}\). Suppose that
  \(\ConvexSet \cap \Pd{n} \neq \emptyset\). Let \(\Xb \in
  \ConvexSet\), and let \(P\) denote the orthogonal projection onto
  \(\setst{z \in \Reals^q}{\supp(z) \cap \supp\paren{\Bcal(\Xb) - b} =
    \emptyset}\). Then
  \begin{equation}
    \label{eq:dim-normal-cone-sdp}
    \dim\paren*{
      \NormalCone{\ConvexSet}{\Xb}
    }
    =
    \dim(\Sym{n})
    -
    \dim\paren[\Big]{
      \Null(\Acal)
      \cap
      \Null(P \compose \Bcal)
      \cap
      \linspan\setst*{
        \Symmetrize(\Xb \oprod{u}{v})
      }{
        u, v \in \Reals^n
      }
    }.
  \end{equation}
  In particular, if \(\Xb = \oprodsym{\xb}\) for some nonzero \(\xb
  \in \Reals^n\), then
  \begin{equation}
    \label{eq:dim-normal-cone-sdp-rank-one}
    \dim\paren[\big]{
      \NormalCone{\ConvexSet}{\oprodsym{\xb}}
    }
    =
    \dim(\Sym{n})
    -
    \dim\paren[\Big]{
      \paren*{
        \setst{
          A_i \xb
        }{
          i \in [p]
        }
        \cup
        \setst*{
          B_i \xb
        }{
          i \in [q] \drop \supp\paren*{\Bcal(\oprodsym{\xb}) - b}
        }
      }^{\perp}
    },
  \end{equation}
  where \(A_i \coloneqq \Acal^*(e_i)\) for all \(i \in [p]\) and \(B_i
  \coloneqq \Bcal^*(e_i)\) for all \(i \in [q]\); thus,
  \begin{claimeq}
    \(\oprodsym{\xb} \in \ConvexSet\) is a vertex of~\(\ConvexSet\) if
    and only if \(\setst{A_i \xb}{i \in [p]} \cup \setst{B_i \xb}{i
      \in [q] \drop \supp\paren{\Bcal(\oprodsym{\xb}) - b}}\)
    spans~\(\Reals^n\).
  \end{claimeq}
\end{theorem}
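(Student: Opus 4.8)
The plan is to combine the explicit description of the normal cone in Proposition~\ref{prop:conic-lp-normal-cone} with the description of the conjugate face in Proposition~\ref{prop:conjugate-face-sdp}, and then pass to orthogonal complements within~\(\Sym{n}\). Since \(\NormalCone{\ConvexSet}{\Xb}\) is a convex cone containing the origin, its dimension equals that of its linear span. Taking the span of the right-hand side of~\eqref{eq:conic-lp-normal-cone}, and using that a negation does not affect a span, that \(\Image(\Acal^*)\) is already a subspace, and that the span of the cone \(\setst{\Bcal^*(z)}{z \in \Reals_+^q,\ \supp(z) \cap \supp(\Bcal(\Xb)-b) = \emptyset}\) is \(\Bcal^*\) applied to the coordinate subspace onto which \(P\) projects, i.e.\ \(\Image(\Bcal^* \compose P) = \Image\paren{(P \compose \Bcal)^*}\) because \(P^* = P\), I obtain
\[
\linspan\paren[\big]{\NormalCone{\ConvexSet}{\Xb}}
=
\Image(\Acal^*)
+
\Image\paren[\big]{(P \compose \Bcal)^*}
+
\linspan\paren[\big]{\Psd{n} \cap \set{\Xb}^{\perp}}.
\]

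The crux is to identify each summand as \(V^{\perp}\) for \(V\) one of the three subspaces in~\eqref{eq:dim-normal-cone-sdp}. Two are immediate: \(\Image(\Acal^*)^{\perp} = \Null(\Acal)\) and \(\Image\paren{(P\compose\Bcal)^*}^{\perp} = \Null(P \compose \Bcal)\). For the third, use~\eqref{eq:conjugate-face-sdp-cone} to rewrite \(\linspan\paren{\Psd{n}\cap\set{\Xb}^{\perp}} = \linspan\setst{\oprodsym{b}}{b \in \Null(\Xb)}\), and establish
\[
\paren[\Big]{
  \linspan\setst[\big]{\oprodsym{b}}{b \in \Null(\Xb)}
}^{\perp}
=
\linspan\setst[\big]{\Symmetrize(\Xb \oprod{u}{v})}{u,v \in \Reals^n}.
\]
The inclusion ``\(\supseteq\)'' holds since \(\iprod{\oprodsym{b}}{\Symmetrize(\Xb\oprod{u}{v})} = (v^{\transp}b)(b^{\transp}\Xb u) = 0\) whenever \(b \in \Null(\Xb)\) (as then \(\Xb b = 0\)). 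For ``\(\subseteq\)'' I would compare dimensions. Writing \(\Reals^n = \Image(\Xb) \oplus \Null(\Xb)\) and \(r \coloneqq \rank(\Xb)\), and noting that \(\Xb u\) ranges over \(\Image(\Xb)\) while \(v\) decomposes along \(\Image(\Xb) \oplus \Null(\Xb)\), the right-hand side is the subspace of \(\Sym{n}\) of matrices whose \(\Null(\Xb) \times \Null(\Xb)\) block vanishes, of dimension \(\binom{r+1}{2} + r(n-r) = \binom{n+1}{2} - \binom{n-r+1}{2}\); this is \(\dim(\Sym{n})\) minus \(\dim\linspan\setst{\oprodsym{b}}{b\in\Null(\Xb)} = \dim\paren{\Psd{n}\cap\set{\Xb}^{\perp}} = \binom{n-r+1}{2}\) by~\eqref{eq:conjugate-face-sdp-dim}. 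Taking orthogonal complements of the span identity and using \((U+V+W)^{\perp} = U^{\perp}\cap V^{\perp}\cap W^{\perp}\) together with \(\dim(\cdot) + \dim(\cdot^{\perp}) = \dim(\Sym{n})\) then yields~\eqref{eq:dim-normal-cone-sdp}.

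For the rank-one case I would set \(\Xb = \oprodsym{\xb}\) with \(\xb \neq 0\). Then \(\Xb\oprod{u}{v} = (\xb^{\transp}u)\,\oprod{\xb}{v}\), and since \(\xb^{\transp}u\) ranges over \(\Reals\) we get \(\linspan\setst{\Symmetrize(\Xb\oprod{u}{v})}{u,v} = \setst{\Symmetrize(\oprod{\xb}{v})}{v \in \Reals^n}\); moreover \(v \mapsto \Symmetrize(\oprod{\xb}{v})\) is injective (if \(\oprod{\xb}{v} + \oprod{v}{\xb} = 0\), applying both sides to \(\xb\) forces \(v \in \linspan\set{\xb}\), and then \(v = 0\)), so this space has dimension \(n\). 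Hence the triple intersection in~\eqref{eq:dim-normal-cone-sdp} is the image under this injection of \(\setst{v \in \Reals^n}{\Symmetrize(\oprod{\xb}{v}) \in \Null(\Acal) \cap \Null(P\compose\Bcal)}\). Using \(\iprod{A_i}{\Symmetrize(\oprod{\xb}{v})} = (A_i\xb)^{\transp}v\), and that \((P\compose\Bcal)(\Symmetrize(\oprod{\xb}{v})) = 0\) iff \((B_i\xb)^{\transp}v = 0\) for every \(i \in [q] \drop \supp(\Bcal(\oprodsym{\xb})-b)\), this set equals \(\paren[\big]{\setst{A_i\xb}{i\in[p]} \cup \setst{B_i\xb}{i \in [q]\drop\supp(\Bcal(\oprodsym{\xb})-b)}}^{\perp}\), so its dimension matches that complement, giving~\eqref{eq:dim-normal-cone-sdp-rank-one}. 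Finally, \(\oprodsym{\xb}\) is a vertex iff \(\dim(\NormalCone{\ConvexSet}{\oprodsym{\xb}}) = \dim(\Sym{n})\), iff that orthogonal complement is \(\set{0}\), iff the listed vectors span \(\Reals^n\), which is the claim. The main obstacle is the orthogonal-complement identity for \(\linspan\setst{\oprodsym{b}}{b\in\Null(\Xb)}\); everything else is routine bookkeeping with adjoints and supports.
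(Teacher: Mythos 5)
Your proposal is correct, and its skeleton is the same as the paper's: both start from Proposition~\ref{prop:conic-lp-normal-cone}, take linear spans, and compute the orthogonal complement \(\Null(\Acal) \cap \Null(P \compose \Bcal) \cap \sqbrac[\big]{\linspan\paren{\Psd{n} \cap \set{\Xb}^{\perp}}}^{\perp}\), and your rank-one specialization (injectivity of \(h \mapsto \Symmetrize(\oprod{\xb}{h})\), the computation \(\iprod{A_i}{\Symmetrize(\oprod{\xb}{h})} = (A_i\xb)^{\transp}h\), and the vertex criterion) is essentially identical to the paper's. The one place where you genuinely diverge is the key identity \(\sqbrac[\big]{\linspan\paren{\Psd{n} \cap \set{\Xb}^{\perp}}}^{\perp} = \linspan\setst{\Symmetrize(\Xb\oprod{u}{v})}{u,v \in \Reals^n}\), i.e.\ the paper's~\eqref{eq:dim-normal-cone-sdp-aux1}: the paper proves it by first handling the diagonal matrix \(D = \Diag(\lambda^{\downarrow}(\Xb))\), where the conjugate face is \(0 \oplus \Psd{n-r}\), and then transporting the identity via the congruence \(\CongMap{Q}\) through a chain of commutation steps with \(\linspan\), \(\perp\), and \(\Symmetrize\); you instead invoke the description~\eqref{eq:conjugate-face-sdp-cone} of the conjugate face as \(\cone\setst{\oprodsym{b}}{b \in \Null(\Xb)}\), check one inclusion by the direct computation \(\iprod{\oprodsym{b}}{\Symmetrize(\Xb\oprod{u}{v})} = 0\) for \(b \in \Null(\Xb)\) (both terms of the symmetrization vanish since \(b^{\transp}\Xb = 0\)), and close the gap with a block-dimension count against~\eqref{eq:conjugate-face-sdp-dim}. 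Both routes lean on Proposition~\ref{prop:conjugate-face-sdp}, just on different parts of it; yours is somewhat more elementary and self-contained at this point (no change-of-basis bookkeeping), while the paper's reduce-to-diagonal-and-conjugate pattern is the one it reuses elsewhere (e.g.\ in Theorem~\ref{thm:normal-cone-modular-rank}), which is presumably why it is phrased that way. Your dimension count is valid because \(\Image(\Xb) = \Null(\Xb)^{\perp}\) for symmetric \(\Xb\), so the block decomposition you use is orthogonal; it is worth saying this explicitly, but it is not a gap.
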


\begin{proof}
  We start by proving that
  \begin{equation}
    \label{eq:dim-normal-cone-sdp-aux1}
    \sqbrac[\big]{
      \linspan
      \paren[\big]{
        \Psd{n}
        \cap
        \set{\Xb}^{\perp}
      }
    }^{\perp}
    =
    \linspan\setst*{
      \Symmetrize(\Xb \oprod{u}{v})
    }{
      u, v \in \Reals^n
    }.
  \end{equation}
  Let \(Q \in \OOrtho(n)\) such that \(\Xb = Q \Diag(\lambda)
  Q^{\transp}\), where \(\lambda \coloneqq
  \lambda^{\downarrow}(\Xb)\). Set \(D \coloneqq \Diag(\lambda)\) and
  \(r \coloneqq \rank(\Xb)\). Note that
  \begin{equation}
    \label{eq:conjugate-face-sdp-perp-aux1}
    \sqbrac[\big]{
      \linspan
      \paren[\big]{
        \Psd{n}
        \cap
        \set{D}^{\perp}
      }
    }^{\perp}
    =
    \linspan\setst*{
      \Symmetrize(D \oprod{u}{v})
    }{
      u, v \in \Reals^n
    }
  \end{equation}
  since by Proposition~\ref{prop:conjugate-face-sdp} we have
  \begin{equation*}
    \begin{split}
      \sqbrac[\big]{
        \linspan\paren[\big]{
          \Psd{n}
          \cap
          \set{D}^{\perp}
        }
      }^{\perp}
      & =
      \sqbrac[\big]{
        \linspan\paren[\big]{
          0 \oplus \Psd{n-r}
        }
      }^{\perp}
      =
      \sqbrac[\big]{
        0 \oplus \Sym{n-r}
      }^{\perp}
      \\
      & =
      \linspan\setst[\big]{
        \Symmetrize\paren{\oprod{e_i}{e_j}}
      }{
        i \in [r],\, j \in [n]
      }
      \\
      & =
      \linspan\setst[\big]{
        \Symmetrize\paren{D \oprod{u}{v}}
      }{
        u,v \in \Reals^n
      }.
    \end{split}
  \end{equation*}
  In the latter equality, the inclusion~`\(\subseteq\)' is obvious.
  For the reverse inclusion, let \(u,v \in \Reals^n\) and note that
  \(\Symmetrize(D \oprod{u}{v}) = \sum_{i=1}^n \sum_{j=1}^n u_i v_j
  \Symmetrize(D\oprod{e_i}{e_j}) = \sum_{i=1}^r \sum_{j=1}^n u_i v_j
  \Symmetrize(D\oprod{e_i}{e_j}) \). This
  proves~\eqref{eq:conjugate-face-sdp-perp-aux1}.

  To prove~\eqref{eq:dim-normal-cone-sdp-aux1}, apply \(\CongMap{Q}\)
  to both sides of~\eqref{eq:conjugate-face-sdp-perp-aux1} to get
  \begin{equation*}
    \begin{split}
      \sqbrac[\big]{
        \linspan
        \paren[\big]{
          \Psd{n}
          \cap
          \set{\Xb}^{\perp}
        }
      }^{\perp}
      & =
      \sqbrac[\big]{
        \linspan
        \paren[\big]{
          \Psd{n}
          \cap
          \set{\CongMap{Q}(D)}^{\perp}
        }
      }^{\perp}
      =
      \sqbrac[\big]{
        \linspan
        \paren[\big]{
          \Psd{n}
          \cap
          \CongMap{Q}^{-*}\paren{
            \set{D}^{\perp}
          }
        }
      }^{\perp}
      \\
      & =
      \sqbrac[\big]{
        \linspan\paren[\big]{
          \CongMap{Q}^{-*}\paren{
            \Psd{n}
            \cap
            \set{D}^{\perp}
          }
        }
      }^{\perp}
      =
      \sqbrac[\big]{
        \CongMap{Q}^{-*}\paren[\big]{
          \linspan\paren{
            \Psd{n}
            \cap
            \set{D}^{\perp}
          }
        }
      }^{\perp}
      \\
      & =
      \CongMap{Q}\paren[\big]{
        \paren[\big]{
          \linspan\paren{
            \Psd{n}
            \cap
            \set{D}^{\perp}
          }
        }^{\perp}
      }
      \\
      & =
      \CongMap{Q}\paren[\big]{
        \linspan\setst[\big]{
          \Symmetrize(D \oprod{u}{v})
        }{
          u,v \in \Reals^n
        }
      }
      \\
      & =
      \linspan\setst[\big]{
        \CongMap{Q}\paren[\big]{
          \Symmetrize(D \oprod{u}{v})
        }
      }{
        u,v \in \Reals^n
      }
      \\
      & =
      \linspan\setst[\big]{
        \Symmetrize\paren[\big]{
          \CongMap{Q}(D \oprod{u}{v})
        }
      }{
        u,v \in \Reals^n
      }
      \\
      & =
      \linspan\setst[\big]{
        \Symmetrize\paren[\big]{
          \Xb \oprod{u}{v}
        }
      }{
        u,v \in \Reals^n
      }.
    \end{split}
  \end{equation*}

  By Proposition~\ref{prop:conic-lp-normal-cone}
  and~\eqref{eq:dim-normal-cone-sdp-aux1}, we have
  \begin{equation*}
    \begin{split}
      \paren[\Big]{
        \linspan\paren[\big]{
          \NormalCone{\ConvexSet}{\Xb}
        }
      }^{\perp}
      & =
      \paren[\Big]{
        \Image(\Acal^*)
        +
        \Image(\Bcal^* \compose P)
        -
        \linspan\paren[\big]{
          \Psd{n}
          \cap
          \set{\Xb}^{\perp}
        }
      }^{\perp}
      \\
      & =
      \Null(\Acal)
      \cap
      \Null(P \compose \Bcal)
      \cap
      \sqbrac[\Big]{
        \linspan\paren[\big]{
          \Psd{n}
          \cap
          \set{\Xb}^{\perp}
        }
      }^{\perp}
      \\
      & =
      \Null(\Acal)
      \cap
      \Null(P \compose \Bcal)
      \cap
      \linspan\setst[\big]{
        \Symmetrize(\Xb \oprod{u}{v})
      }{
        u,v \in \Reals^n
      }.
    \end{split}
  \end{equation*}
  This proves~\eqref{eq:dim-normal-cone-sdp}.

  For the remainder of the proof, suppose that \(\Xb =
  \oprodsym{\xb}\) for some nonzero \(\xb \in \Reals^n\). Note
  that~\eqref{eq:dim-normal-cone-sdp-aux1} specializes to
  \begin{equation}
    \label{eq:conjugate-face-sdp-perp-rank-one}
    \sqbrac[\big]{
      \linspan
      \paren[\big]{
        \Psd{n}
        \cap
        \set{\oprodsym{\xb}}^{\perp}
      }
    }^{\perp}
    =
    \setst*{
      \Symmetrize(\oprod{\xb}{h})
    }{
      h \in \Reals^n
    }
  \end{equation}
  since the RHS of~\eqref{eq:conjugate-face-sdp-perp-rank-one} is a
  linear subspace of \(\Sym{n}\).

  Let \(h \in \Reals^n\). Then
  \(\sqbrac[\big]{\Acal\paren{\Symmetrize(\oprod{\xb}{h})}}_i =
  h^{\transp} A_i \xb\) for \(i \in [p]\) and
  \(\sqbrac[\big]{\Bcal\paren{\Symmetrize(\oprod{\xb}{h})}}_i =
  h^{\transp} B_i \xb\) for \(i \in [q]\). Thus,
  using~\eqref{eq:conjugate-face-sdp-perp-rank-one}, we find that
  \begin{multline*}
    \Null(\Acal)
    \cap
    \Null (P \compose \Bcal)
    \cap
    \sqbrac[\Big]{
      \linspan\paren[\big]{
        \Psd{n}
        \cap
        \set{\Xb}^{\perp}
      }
    }^{\perp}
    \\
    =
    \setst[\Big]{
      \Symmetrize(\oprod{\xb}{h})
    }{
      h \in \paren[\big]{
        \setst{
          A_i \xb
        }{
          i \in [p]
        }
        \cup
        \setst{
          B_i \xb
        }{
          i \in [q] \drop \supp\paren*{\Bcal(\oprodsym{\xb}) - b}
        }
      }^{\perp}
    },
  \end{multline*}
  which has the same dimension as \(\paren[\big]{\setst{A_i\xb}{i \in
      [p]} \cup \setst{B_i\xb}{i \in [q] \drop
      \supp\paren*{\Bcal(\oprodsym{\xb}) - b}}}^{\perp}\) since the
  linear map \(h \in \Reals^n \mapsto \Symmetrize(\oprod{\xb}{h})\) is
  injective. This concludes the proof
  of~\eqref{eq:dim-normal-cone-sdp-rank-one}.
\end{proof}

\section{Vertices of the Lifted Representation of the Theta Body and its
  Variants}

In this section, we shall use Theorem~\ref{thm:dim-normal-cone-sdp} to
characterize the vertices of the lifted theta body, defined
in~\eqref{eq:def-lifted-theta-body}. In fact, we shall identify the
vertices of all the spectrahedra in a slightly more general family,
which includes variations of the lifted theta body that may be used to
define the graph parameters~\(\theta\), \(\theta'\) and~\(\theta^+\),
introduced in~\cite{Lovasz79a, McElieceRR78a, Schrijver79a,
  Szegedy94a}. This will allow us to determine the vertices of some
other spectrahedra that arise as relaxations in combinatorial
optimization; in particular, we shall see the extent to which our
result generalizes the characterization of the vertices of the
elliptope by Laurent and Poljak~\cite{LaurentP95a, LaurentP96a}.

Let~\(V\) be a finite set. Let \(E \subseteq \tbinom{V}{2}\). Recall
the linear maps \(\BoolQuadMap{\setlift{V}}\) and~\(\Acal_E\) defined
on~\eqref{eq:bool-quad-map-def1}--\eqref{eq:edge-entries-def}. We
shall use this notation extensively throughout this section.

We will compute all the vertices of \(\liftedTH(G)\) and its variants,
which we introduce next. For a graph \(G = (V,E)\), define
\begin{gather*}
  \liftedTH{}'(G)
  \coloneqq
  \setst[\big]{
    \Xh \in \Psd{\setlift{V}}
  }{
    \BoolQuadMap{\setlift{V}}(\Xh) = 1 \oplus 0,\,
    \Acal_E(\Xh[V]) = 0,\,
    \Acal_{\overline{E}}(\Xh[V]) \geq 0
  },
  \\
  \TH'(G)
  \coloneqq
  \setst*{
    \diag\paren[\big]{
      \Xh[V]
    }
    \in \Reals^V
  }{
    \Xh \in \liftedTH{}'(G)
  },
  \shortintertext{and}
  \liftedTH{}^+(G)
  \coloneqq
  \setst[\big]{
    \Xh \in \Psd{\setlift{V}}
  }{
    \BoolQuadMap{\setlift{V}}(\Xh) = 1 \oplus 0,\,
    \Acal_E(\Xh[V]) \leq 0,\,
  },
  \\
  \TH^+(G)
  \coloneqq
  \setst*{
    \diag\paren[\big]{
      \Xh[V]
    }
    \in \Reals^V
  }{
    \Xh \in \liftedTH{}^+(G)
  }.
\end{gather*}
It is well known that the Lovász theta number and its variants are the
support functions of these sets, i.e., for a graph \(G = (V,E)\) and
\(w \in \Reals_+^V\), we have
\begin{gather*}
  \theta(G;w)
  =
  \max\setst[\big]{
    \iprod{w}{x}
  }{
    x \in \TH(G)
  },
  \\
  \theta'(G;w)
  =
  \max\setst[\big]{
    \iprod{w}{x}
  }{
    x \in \TH'(G)
  },
  \\
  \theta^+(G;w)
  =
  \max\setst[\big]{
    \iprod{w}{x}
  }{
    x \in \TH^+(G)
  }.
\end{gather*}
We refer the reader to~\cite{Knuth94a, GvozdenovicL08a} and the
references therein for more details.

We may now present our main result, which identifies the vertices
of~\(\liftedTH(G)\), \(\liftedTH{}'(G)\), and~\(\liftedTH{}^+(G)\):
\begin{theorem}
  \label{thm:lifted-TH-vertex-is-rank-one}
  Let \(V\) be a finite set, and let \(E^+, E^- \subseteq
  \tbinom{V}{2}\). Set
  \begin{equation*}
    \ConvexSetLifted
    \coloneqq
    \setst*{
      \Xh \in \Psd{\setlift{V}}
    }{
      \BoolQuadMap{\setlift{V}}(\Xh) = 1 \oplus 0,\,
      \Acal_{E^+}(\Xh[V]) \geq 0,\,
      \Acal_{E^-}(\Xh[V]) \leq 0
    }.
  \end{equation*}
  Let \(\Xh \in \ConvexSetLifted\). Then \(\Xh\) is a vertex
  of~\(\ConvexSetLifted\) if and only if \(\rank(\Xh) = 1\).
\end{theorem}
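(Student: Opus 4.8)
The plan is to use Theorem~\ref{thm:dim-normal-cone-sdp} as the workhorse. One direction is easy: if $\rank(\Xh)=1$, then $\Xh = \oprodsym{\xh}$ for some $\xh \in \Reals^{\setlift{V}}$, and we must show the vectors $\set{A_i \xh}$ arising from the active constraints span $\Reals^{\setlift{V}}$, where the constraint maps are $\BoolQuadMap{\setlift{V}}$ (always active, being an equality) together with the active inequalities among $\Acal_{E^+}, \Acal_{E^-}$. The constraint $\BoolQuadMap{\set{0}}(\Xh)=1$ gives $\oprodsym{e_0}$, contributing $(\oprodsym{e_0})\xh = \xh_0\, e_0$; since $\Xh[\set{0}] = \xh_0^2 = 1$, this yields $e_0$. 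The constraint $\BoolQuadMap{V}(\Xh)=0$ contributes $\Symmetrize(\oprod{e_i}{(e_i-e_0)})\,\xh = \thalf\bigl((\xh_i - \xh_0) e_i + \xh_i (e_i - e_0)\bigr)$ for each $i \in V$; using $e_0$ already in hand, and the feasibility relations $\Xh_{ii} = \xh_i^2 = \Xh_{0i} = \xh_0 \xh_i$ (which force $\xh_i \in \set{0,\xh_0}$, i.e. $\xh_i \in \set{0,\pm 1}$), one checks that for each $i$ with $\xh_i \neq 0$ one recovers $e_i$. For $i$ with $\xh_i = 0$, the vector $A_i \xh$ reduces to $-\thalf \xh_0 e_0$, which is not new — so one must produce $e_i$ from an edge constraint or else argue it is unneeded. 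The key point: if $\xh_i = 0$ then the matrix $\oprodsym{\xh}$ is identically zero in row/column $i$, so $e_i \in \Null(\Xh)$; in fact I would instead directly verify the rank-one case via the span condition restricted to $\supp(\xh) \cup \set{0}$, and handle coordinates outside the support by noting that the relevant active equalities (diagonal entries $\BoolQuadMap{V}$) already pin them down. Concretely $\set{e_0} \cup \setst{e_i}{i \in \supp(\xh)\cap V}$ is spanned, and for $i \notin \supp(\xh)$ the vector $A_i\xh$ combined with the already-obtained $e_0$ still does not give $e_i$; here the clean fix is that $\Symmetrize(\oprod{e_i}{(e_i-e_0)})\xh$ together with needing the \emph{full} span forces us to check — in fact $\oprodsym{\xh}$ with $\xh_i=0$ has $(\oprodsym \xh)_{ii}=0$, and plugging $\xh$ into $A_i = \BoolQuadMap{V}^*(e_i)$ gives $-\thalf\xh_0 e_0$, a multiple of $e_0$; so the $e_i$ with $i\notin\supp\xh$ are genuinely missing and $\set{A_i\xh}$ does \emph{not} span unless $\supp(\xh) = V$. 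This means the rank-one vertices are exactly the $\oprodsym\xh$ with $\xh_0 = \pm1$ and $\xh_i = \xh_0$ for all $i\in V$ — but that cannot be right for arbitrary $E^\pm$; \textbf{so the edge constraints must be doing the work}, and the correct statement is that for $i\notin\supp\xh$ some active edge constraint supplies $e_i$. I would therefore re-examine: if $\xh_i = 0$, is the constraint $(\Acal_{E^-})_{ij} \le 0$ or $(\Acal_{E^+})_{ij}\ge 0$ active at some $j$? Not necessarily. Hence the honest route is: show $\rank(\Xh)=1 \Rightarrow \Xh$ a vertex fails in general, so actually the theorem's "only if" is the easy direction and "if" is subtler — reread: the theorem says vertex $\iff$ rank one, so I must establish \emph{both}; the span argument above shows that $\oprodsym\xh$ is a vertex precisely when $\supp(\xh)=\setlift{V}$, i.e. $\xh \in \set{\pm1}^{\setlift{V}}$ with all entries equal up to the $e_0$-normalization. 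I would then reconcile this with feasibility: if $\Xh = \oprodsym\xh \in \ConvexSetLifted$ has $\xh_i = 0$ for some $i$, is it still a vertex? By the span criterion, no. So the set of rank-one points of $\ConvexSetLifted$ that are vertices is a proper subset unless every rank-one feasible point has full support — which is false. \textbf{This tension is the heart of the matter and signals I have mis-derived $A_i\xh$; I should recompute $\BoolQuadMap{V}^*(e_i)\,\xh$ carefully, as it likely equals $\thalf\xh_i(2e_i - e_0) + \cdots$ and does yield $e_i$ whenever the $0$-entry and $i$-entry feasibility constraints combine correctly.}

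For the converse — if $\rank(\Xh) \ge 2$ then $\Xh$ is not a vertex — I would invoke Proposition~\ref{prop:free-offdiag-rank-bound} in the spirit of Corollary~\ref{cor:smash-to-rank-one}. The structure of $\ConvexSetLifted$ is exactly a spectrahedron in $\Psd{\setlift V}$ whose defining equalities include $\BoolQuadMap{V}$, which pins the entire first row/column (indexed by $0$) to equal the diagonal of the $V$-block. The relevant observation: the linear span of the equality constraints, viewed inside $\Null(\Acal)$, omits all of $\setst{\Symmetrize(\oprod{e_0}{e_i})}{i\in V}$ only up to the diagonal relation — so one can find a large linearly independent family of $\Symmetrize(\oprod{h_0}{h_i})$ in $\Null(\Acal)$ with $h_0 = e_0$. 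Then Proposition~\ref{prop:free-offdiag-rank-bound} bounds the rank of every vertex by $\card{\setlift V} - k$ for the appropriate $k$; choosing $k = \card V$ (the off-diagonal directions $\oprod{e_0}{e_i}$, $i\in V$, that are free once we quotient by the diagonal equalities) forces vertices to have rank $\le 1$, i.e. rank exactly $1$ since $\Xh_{00}=1\neq0$. The inequality constraints $\Acal_{E^+}, \Acal_{E^-}$ only shrink $\ConvexSetLifted$ and only add more active-constraint directions at boundary points, so they cannot turn a non-vertex into a vertex in a way that violates this rank bound — more precisely, at a point where some inequalities are active, one drops those from the "free" count, but the $\Symmetrize(\oprod{e_0}{e_i})$ directions are never among the edge-constraint directions (edges live in $\tbinom V2$, not touching $0$), so $k = \card V$ survives regardless.

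The main obstacle, then, is \textbf{pinning down the easy direction cleanly}: verifying that for $\xh \in \set{\pm1}^{\setlift V}$ (suitably normalized so $\xh_0 = 1$) corresponding to a stable set — i.e. $\xh_i \in \set{0,1}$ after the affine change of variables $x \mapsto \thalf(x + \text{const})$, since $\liftedTH$ is a \emph{boolean} quadric relaxation, not a $\pm1$ one — the span condition of Theorem~\ref{thm:dim-normal-cone-sdp} holds. The subtlety is that here the "elliptope-type" $\pm1$ analysis of Laurent–Poljak must be replaced by the boolean $0/1$ analysis, where $\Xh_{0i} = \Xh_{ii}$ (not $\Xh_{ii}=1$), so a feasible rank-one $\xh$ has $\xh_i \in \set{0,\xh_0}$, and the stable-set condition $x_i x_j = 0$ for $ij \in E^-$ (resp.\ the edge sign pattern for $E^+$) must be shown to be automatically respected or else to constrain which $\xh$ are feasible. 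I expect the correct bookkeeping to be: feasibility of $\oprodsym\xh$ already forces $\xh$ (up to scaling $\xh_0=1$) to be the symmetric tensor of a $0/1$ vector that is a stable set in the graph with edge set $E^-$ and satisfies the $E^+$ sign constraints, and for every such $\xh$ the span condition holds because the equality constraints $\BoolQuadMap{\setlift V}$ alone already recover $e_0$ and every $e_i$ with $\xh_i\neq0$, while for $\xh_i = 0$ one uses that $e_i\in\Null(\oprodsym\xh)$ together with $\Symmetrize(\oprod{e_0}{e_i})$ lying in $\Image(\BoolQuadMap{V}^*) + (\Psd{}\cap\set{\Xh}^\perp)$. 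Carefully resolving this last point — i.e.\ showing the coordinates outside the support of $\xh$ do not obstruct full-dimensionality of the normal cone — is where I anticipate spending the real effort; everything else is linear algebra and a direct appeal to the two propositions above.
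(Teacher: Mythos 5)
Your overall plan (drive everything through Theorem~\ref{thm:dim-normal-cone-sdp}) is the right instinct, and the ``if'' half is salvageable, but as written both halves have problems. For the ``if'' direction the difficulty you describe is only a computational slip, which you flag but never resolve: for \(\Xh = \oprodsym{(1\oplus\xb)}\) feasible, the constraint \(\BoolQuadMap{V}(\Xh)=0\) forces \(\xb\in\set{0,1}^V\), and then \(2\,\Symmetrize\paren{\oprod{e_i}{(e_i-e_0)}}(1\oplus\xb) = (\xb_i-1)e_i + \xb_i(e_i-e_0) = (2\xb_i-1)e_i - \xb_i e_0\); when \(\xb_i=0\) this is \(-e_i\), a nonzero multiple of \(e_i\), \emph{not} of \(e_0\) as you wrote. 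Since \(2\xb_i-1=\pm1\) for every \(i\), these vectors together with \(\oprodsym{e_0}(1\oplus\xb)=e_0\) always form a basis of \(\Reals^{\setlift{V}}\), so the span criterion of Theorem~\ref{thm:dim-normal-cone-sdp} applies with the equality constraints alone, for every feasible rank-one point, with no case distinction on the support and no help from the edge constraints. This is exactly the paper's argument; the ``tension'' you describe disappears once the computation is redone, but since you leave it unresolved, this half of your proof is not actually complete.

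The genuine gap is the converse (vertex \(\Rightarrow\) rank one). Your plan is to apply Proposition~\ref{prop:free-offdiag-rank-bound} with \(h_0=e_0\) and \(h_i=e_i\), but no matrix of the form \(\Symmetrize(\oprod{e_0}{h})\) other than \(0\) lies in \(\Null(\BoolQuadMap{\setlift{V}})\): one computes \(\iprod{\BoolQuadMap{V}^*(e_i)}{\Symmetrize(\oprod{e_0}{h})} = -\thalf h_i\) and \(\iprod{\oprodsym{e_0}}{\Symmetrize(\oprod{e_0}{h})} = h_0\), so the required family with \(h_0=e_0\) simply does not exist, and ``up to the diagonal relation'' does not produce one. (A family of size \(\card{V}\) does exist with \(h_0=e_m\) for \(m\in V\), but any active edge constraint at \(m\) destroys it, and at points such as \(\oprodsym{(1\oplus 0)}\) every edge constraint is active.) Worse, Proposition~\ref{prop:free-offdiag-rank-bound} is stated and proved only for equality-constrained spectrahedra: its proof projects the normal cone onto \(\Null(\Acal)\) and uses that \(\Image(\Acal^*)\) vanishes under that projection. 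Here the active edge matrices \(\Symmetrize(\oprod{e_i}{e_j})\), \(ij\in\tbinom{V}{2}\), \emph{do} lie in \(\Null(\BoolQuadMap{\setlift{V}})\), so they survive the projection and the argument no longer yields \(\card{V}\) independent vectors in \(\Null(\Xh)\); your claim that active inequalities ``only drop from the free count'' is exactly backwards. The paper's proof of this direction is genuinely different and more delicate: for each \(k\in V\) it builds an explicit matrix \(C_k \in \linspan\setst{\Symmetrize(\Xh\oprod{u}{v})}{u,v\in\Reals^{\setlift{V}}}\) annihilated by \(\BoolQuadMap{V}\) and by the inactive edge constraints, takes a difference \(D\) of two suitably scaled \(C_k\)'s to also kill the \(\oprodsym{e_0}\) component, concludes \(D=0\) from the vertex hypothesis via \eqref{eq:dim-normal-cone-sdp}, deduces \(\Xh_{kk}=\Xh_{\ell\ell}=\Xh_{k\ell}\) on the support of the diagonal, writes \(\Xh\) as a convex combination of \(\oprodsym{(1\oplus 0)}\) and \(\oprodsym{(1\oplus\incidvector{S})}\), and finishes with extremality. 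Without an argument of that kind (or a proved inequality-constrained version of Proposition~\ref{prop:free-offdiag-rank-bound}), your converse does not go through.
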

\begin{proof}
  \newcommand*{\AlmostFullMap}{\Fcal}
  We first prove the `if' part. Let \(\Xh \in \ConvexSetLifted\) be
  rank-one, so that \(\Xh\) is of the form \(\Xh = \oprodsym{(1 \oplus
    \xb)}\) for some \(\xb \in \Reals^V\). Since
  \(\BoolQuadMap{V}(\Xh) = 0\), we have \(\xb \in \set{0,1}^V\). Then
  \(\sqbrac{\BoolQuadMap{\set{0}}^*(e_0)}(1 \oplus \xb) =
  \oprodsym{e_0}(1 \oplus \xb) = e_0\) and, for \(i \in V\), we have
  \(2 \sqbrac{\BoolQuadMap{V}^*(e_i)}(1 \oplus \xb) = 2
  \Symmetrize\paren{\oprod{e_i}{(e_i-e_0)}} (1 \oplus \xb) = (\xb_i -
  1)e_i + \xb_i(e_i - e_0) = \paren[\big]{2\xb_i-1}e_i - \xb_i e_0\).
  These vectors form a basis for~\(\Reals^{\set{0} \cup V}\), whence
  \(\Xh\) is a vertex of~\(\ConvexSetLifted\) by
  Theorem~\ref{thm:dim-normal-cone-sdp}.

  Now we prove that `only if' part. Let \(\Xh\) be a vertex
  of~\(\ConvexSetLifted\). For each \(k \in V\), define
  \begin{equation*}
    \thalf C_k
    \coloneqq
    \Symmetrize\paren{
      \Xh \oprod{e_k}{e_0}
    }
    +
    \sum
    \setst[\bigg]{
      \frac{
        \Xh_{k\ell}
      }{
        \Xh_{\ell\ell}
      }
      \Symmetrize\paren[\big]{
        \Xh \oprodsym{e_{\ell}}
      }
    }{
      \ell \in V,\,
      \Xh_{\ell\ell} > 0
    }.
  \end{equation*}
  For \(E \in \set{E^+, E^-}\), let \(P_E\) denote the orthogonal
  projection onto \(\setst{z \in \Reals^E}{\supp(z) \cap
    \supp\paren{\Acal_E(\Xh[V])} = \emptyset}\). Let \(\AlmostFullMap
  \ffrom \Sym{\setlift{V}} \fto \Reals^V \oplus \Reals^{E+} \oplus
  \Reals^{E-}\) be defined as \(\AlmostFullMap(\Yh) \coloneqq
  \BoolQuadMap{V}(\Yh) \oplus \paren[\big]{P_{E^+} \compose
    \Acal_{E^+}(\Yh[V])} \oplus \paren[\big]{P_{E^-} \compose
    \Acal_{E^-}(\Yh[V])}\) for each \(\Yh \in \Sym{\setlift{V}}\).
  (Note the absence of~\(\set{0}\) in the index set
  of~\(\BoolQuadMap{V}\).) Let us prove that
  \begin{equation}
    \label{eq:lifted-TH-vertex-is-rank-one-aux1}
    C_k \in \Null(\AlmostFullMap).
  \end{equation}

  Let \(i,j \in \set{0} \cup V\). Then
  \begin{equation}
    \label{eq:lifted-TH-vertex-is-rank-one-aux2}
    \begin{split}
      \sqbrac{
        C_k
      }_{ij}
      & =
      \Xh_{ik}
      \Iverson{j = 0}
      +
      \Iverson{i = 0}
      \Xh_{kj}
      +
      \sum
      \setst[\bigg]{
        \frac{
          \Xh_{k\ell}
        }{
          \Xh_{\ell\ell}
        }
        \paren[\Big]{
          \Xh_{i\ell}
          \Iverson{\ell = j}
          +
          \Iverson{\ell = i}
          \Xh_{\ell j}
        }
      }{
        \ell \in V,\,
        \Xh_{\ell\ell} > 0
      }
      \\
      & =
      \Xh_{ik}
      \Iverson{j = 0}
      +
      \Iverson{i = 0}
      \Xh_{kj}
      +
      \sum
      \setst[\bigg]{
        \frac{
          \Xh_{k\ell}
        }{
          \Xh_{\ell\ell}
        }
        \paren[\Big]{
          \Xh_{ij}
          \Iverson{\ell = j}
          +
          \Iverson{\ell = i}
          \Xh_{ij}
        }
      }{
        \ell \in V,\,
        \Xh_{\ell\ell} > 0
      }
      \\
      & =
      \Xh_{ik}
      \Iverson{j = 0}
      +
      \Iverson{i = 0}
      \Xh_{kj}
      +
      \Xh_{ij}
      \sum
      \setst[\bigg]{
        \frac{
          \Xh_{k\ell}
        }{
          \Xh_{\ell\ell}
        }
        \paren[\Big]{
          \Iverson{\ell = j}
          +
          \Iverson{\ell = i}
        }
      }{
        \ell \in V,\,
        \Xh_{\ell\ell} > 0
      }.
    \end{split}
  \end{equation}
  Thus, if \(i,j \in V\) are distinct and \(\Xh_{ij} = 0\), then
  \(\sqbrac{C_k}_{ij} = 0\). Let \(i \in V\). Then
  \begin{equation*}
    \sqbrac{
      C_k
    }_{ii}
    =
    \Xh_{ii}
    \sum
    \setst[\bigg]{
      \frac{
        \Xh_{k\ell}
      }{
        \Xh_{\ell\ell}
      }
      2\Iverson{\ell = i}
    }{
      \ell \in V,\,
      \Xh_{\ell\ell} > 0
    }
    =
    2\Iverson{\Xh_{ii} > 0}
    \Xh_{ki}
    =
    2\Xh_{ki}
  \end{equation*}
  whereas
  \begin{equation}
    \label{eq:lifted-TH-vertex-is-rank-one-aux3}
    \sqbrac{
      C_k
    }_{i0}
    =
    \Xh_{ik}
    +
    \Xh_{i0}
    \sum
    \setst[\bigg]{
      \frac{
        \Xh_{k\ell}
      }{
        \Xh_{\ell\ell}
      }
      \Iverson{\ell = i}
    }{
      \ell \in V,\,
      \Xh_{\ell\ell} > 0
    }
    =
    \Xh_{ik}
    +
    \Iverson{\Xh_{ii} > 0}
    \Xh_{ki}
    =
    2\Xh_{ki}.
  \end{equation}
  This concludes the proof
  of~\eqref{eq:lifted-TH-vertex-is-rank-one-aux1}.

  We claim that
  \begin{equation}
    \label{eq:lifted-TH-vertex-is-rank-one-aux4}
    \text{if }
    k, \ell \in V
    \text{ are such that }
    \Xh_{kk} > 0
    \text{ and }
    \Xh_{\ell\ell} > 0,
    \text{then }
    \Xh_{kk} = \Xh_{\ell\ell} = \Xh_{k\ell}.
  \end{equation}
  Let \(k, \ell \in V\) such that \(\Xh_{kk} > 0\) and
  \(\Xh_{\ell\ell} > 0\). Set
  \begin{equation*}
    D
    \coloneqq
    \frac{1}{\Xh_{kk}}
    C_k
    -
    \frac{1}{\Xh_{\ell\ell}}
    C_{\ell}.
  \end{equation*}
  Note that \(\sqbrac{C_k}_{00} = 2\Xh_{0k} = 2\Xh_{kk}\) and
  \(\sqbrac{C_{\ell}}_{00} = 2\Xh_{0\ell} = 2\Xh_{\ell\ell}\), whence
  \(D_{00} = 0\). Hence, \(D \in \Null(\BoolQuadMap{\set{0}})\).
  By~\eqref{eq:lifted-TH-vertex-is-rank-one-aux1}, we also have \(D
  \in \Null(\AlmostFullMap)\). Thus, by
  Theorem~\ref{thm:dim-normal-cone-sdp}, we must have \(D = 0\). Now
  from~\eqref{eq:lifted-TH-vertex-is-rank-one-aux3} we get
  \begin{equation*}
    0
    =
    D_{k0}
    =
    \frac{
      \sqbrac{
        C_k
      }_{k0}
    }{
      \Xh_{kk}
    }
    -
    \frac{
      \sqbrac{
        C_{\ell}
      }_{k0}
    }{
      \Xh_{\ell\ell}
    }
    =
    \frac{
      2 \Xh_{kk}
    }{
      \Xh_{kk}
    }
    -
    \frac{
      2 \Xh_{\ell k}
    }{
      \Xh_{\ell\ell}
    }
    \implies
    \Xh_{\ell\ell}
    =
    \Xh_{k \ell}
  \end{equation*}
  and
  \begin{equation*}
    0
    =
    D_{\ell 0}
    =
    \frac{
      \sqbrac{
        C_k
      }_{\ell 0}
    }{
      \Xh_{kk}
    }
    -
    \frac{
      \sqbrac{
        C_{\ell}
      }_{\ell 0}
    }{
      \Xh_{\ell\ell}
    }
    =
    \frac{
      2 \Xh_{k \ell}
    }{
      \Xh_{kk}
    }
    -
    \frac{
      2 \Xh_{\ell\ell}
    }{
      \Xh_{\ell\ell}
    }
    \implies
    \Xh_{kk}
    =
    \Xh_{k \ell}.
  \end{equation*}
  This concludes the proof
  of~\eqref{eq:lifted-TH-vertex-is-rank-one-aux4}.

  From~\eqref{eq:lifted-TH-vertex-is-rank-one-aux4} we find that there
  exists \(\eta \in \Reals\) such that
  \begin{equation}
    \label{eq:lifted-TH-vertex-is-rank-one-aux5}
    \Xh
    =
    (1-\eta)
    \sqbrac[\big]{
      \oprodsym{
        \paren{
          1 \oplus 0
        }
      }
    }
    +
    \eta
    \sqbrac[\big]{
      \oprodsym{
        \paren{
          1 \oplus \incidvector{S}
        }
      }
    }
  \end{equation}
  where \(S \coloneqq \supp(\diag(\Xh[V]))\). If \(S = \emptyset\),
  the proof is complete, so assume that \(S \neq \emptyset\). Then
  \(\Xh \succeq 0\) is equivalent to~\(\eta \in [0,1]\). If \(\eta =
  0\) the proof is complete, so assume \(\eta > 0\). Then
  \eqref{eq:lifted-TH-vertex-is-rank-one-aux5} describes the extreme
  point~\(\Xh\) as a convex combination of two distinct points
  of~\(\ConvexSetLifted\), from which we conclude that \(\eta \in
  \set{0,1}\). Now \(\rank(\Xh) = 1\) follows
  from~\eqref{eq:lifted-TH-vertex-is-rank-one-aux5}.
\end{proof}

We immediately obtain from
Theorem~\ref{thm:lifted-TH-vertex-is-rank-one} the vertices of all the
lifted theta bodies defined above:
\begin{corollary}
  \label{cor:liftedTH-vertices}
  Let \(G = (V,E)\) be a graph. Let \(\ConvexSetLifted \in
  \set[\big]{\liftedTH(G), \liftedTH{}'(G), \liftedTH{}^+(G)}\). Then
  a point \(\Xh\) of~\(\ConvexSetLifted\) is a vertex
  of~\(\ConvexSetLifted\) if and only if \(\rank(\Xh) = 1\). Thus, the
  vertices of~\(\ConvexSetLifted\) are precisely the matrices of the
  form \(\oprodsym{(1 \oplus \incidvector{S})}\) where \(S \subseteq
  V\) is a stable set of~\(G\).
\end{corollary}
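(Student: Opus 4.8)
The plan is to deduce the corollary directly from Theorem~\ref{thm:lifted-TH-vertex-is-rank-one}, so the work splits into two routine parts: first, recognising each of $\liftedTH(G)$, $\liftedTH{}'(G)$ and $\liftedTH{}^+(G)$ as an instance of the set $\ConvexSetLifted$ appearing in that theorem for a suitable choice of $E^+, E^- \subseteq \tbinom{V}{2}$; and second, writing down the rank-one members of each such set.

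For the first part, observe that the equation $\Acal_E(\Xh[V]) = 0$ is the conjunction of $\Acal_E(\Xh[V]) \geq 0$ and $\Acal_E(\Xh[V]) \leq 0$, so $\liftedTH(G)$ equals $\ConvexSetLifted$ with $E^+ = E^- = E$; similarly $\liftedTH{}^+(G)$ equals $\ConvexSetLifted$ with $E^+ = \emptyset$ and $E^- = E$. For $\liftedTH{}'(G)$ one takes $E^+ = \tbinom{V}{2}$ and $E^- = E$: imposing $\Acal_{\tbinom{V}{2}}(\Xh[V]) \geq 0$ together with $\Acal_E(\Xh[V]) \leq 0$ forces the entries of $\Xh[V]$ indexed by $E$ to vanish and those indexed by $\overline{E}$ to be nonnegative, which is exactly the system defining $\liftedTH{}'(G)$. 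Note that $E^- = E$ in all three cases. Theorem~\ref{thm:lifted-TH-vertex-is-rank-one} then asserts that, in each of these sets, the vertices are precisely the rank-one points.

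For the second part, let $\Xh \in \ConvexSetLifted$ have rank one, say $\Xh = \oprodsym{w}$ with $w = \alpha \oplus x \in \Reals^{\setlift{V}}$. The constraint $\BoolQuadMap{\set{0}}(\Xh) = 1$ reads $\Xh_{00} = \alpha^2 = 1$, so after replacing $w$ by $-w$ we may assume $\alpha = 1$; the constraint $\BoolQuadMap{V}(\Xh) = 0$ reads $\Xh_{ii} - \Xh_{i0} = x_i^2 - x_i = 0$ for each $i \in V$, so $x = \incidvector{S}$ with $S \coloneqq \supp(x) \subseteq V$. The inequalities $\Acal_{E^+}(\Xh[V]) \geq 0$ then hold automatically, since each entry $x_i x_j$ lies in $\set{0,1}$, whereas $\Acal_{E^-}(\Xh[V]) \leq 0$ forces $x_i x_j = 0$ for every $ij \in E^- = E$, i.e., $S$ is a stable set of $G$. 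Conversely, any $\oprodsym{(1 \oplus \incidvector{S})}$ with $S$ stable plainly satisfies every defining constraint of $\ConvexSetLifted$ and has rank one, hence is a vertex. Combining this with Theorem~\ref{thm:lifted-TH-vertex-is-rank-one} gives the stated description.

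There is essentially no obstacle here beyond bookkeeping; the only step that requires a moment's thought is the $\liftedTH{}'(G)$ case, where the mixed equality/inequality system must be produced from the overlapping sets $E^+ = \tbinom{V}{2} \supseteq E = E^-$ rather than from a partition of $\tbinom{V}{2}$ into edge and non-edge pairs.
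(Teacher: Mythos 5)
Your proposal is correct and follows the paper's own route: the paper's proof consists exactly of the same choices \(E^+ = E^- = E\) for \(\liftedTH(G)\), \(E^+ = \tbinom{V}{2}\), \(E^- = E\) for \(\liftedTH{}'(G)\), and \(E^+ = \emptyset\), \(E^- = E\) for \(\liftedTH{}^+(G)\), invoking Theorem~\ref{thm:lifted-TH-vertex-is-rank-one}. Your second part merely spells out the identification of the rank-one feasible points with \(\oprodsym{(1 \oplus \incidvector{S})}\) for stable \(S\), which the paper leaves implicit.
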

\begin{proof}
  Immediate from Theorem~\ref{thm:lifted-TH-vertex-is-rank-one}: for
  \(\ConvexSetLifted = \liftedTH(G)\), take \(E^+ \coloneqq E^-
  \coloneqq E\); for \(\ConvexSetLifted = \liftedTH{}'(G)\), take
  \(E^+ \coloneqq \tbinom{V}{2}\) and \(E^- \coloneqq E\); for
  \(\ConvexSetLifted = \liftedTH{}^+(G)\), take \(E^+ \coloneqq
  \emptyset\) and \(E^- \coloneqq E\).
\end{proof}

Let \(V\) be a finite set. Define
\begin{subequations}
  \begin{gather}
    \label{eq:BQbody-def}
    \BQbody{\setlift{V}}
    \coloneqq
    \setst[\big]{
      \Xh \in \Psd{\setlift{V}}
    }{
      \BoolQuadMap{\setlift{V}}(\Xh) = 1 \oplus 0
    },
    \\
    \BQbody{\setlift{V}}'
    \coloneqq
    \setst[\big]{
      \Xh \in \BQbody{\setlift{V}}
    }{
      \Xh[V] \geq 0
    },
    \\
    \BQbody{\setlift{V}}''
    \coloneqq
    \setst[\big]{
      \Xh \in \BQbody{\setlift{V}}
    }{
      \iprod[\big]{
        \Symmetrize\paren[\big]{
          \oprod{
            (e_0-e_i)
          }{
            (e_0-e_j)
          }
        }
      }{
        \Xh
      }
      \geq
      0,\,
      \forall ij \in \tbinom{V}{2}
    }.
  \end{gather}
\end{subequations}
These sets are well-known relaxations for the \emph{boolean quadric
  polytope} \(\conv\setst[\big]{\oprodsym{(1 \oplus x)}}{x \in
  \set{0,1}^V}\). Also, set
\begin{equation*}
  \FlipMatrix
  \coloneqq
  \oprodsym{e_0}
  +
  \sum_{i \in V} \oprod{e_i}{(e_0 - e_i)}
  \in
  \Reals^{
    \paren{\setlift{V}}
    \times
    \paren{\setlift{V}}
  }.
\end{equation*}
Note that \(\FlipMatrix (1 \oplus \incidvector{S}) = (1 \oplus
\incidvector{V \drop S})\) for each \(S \subseteq V\). In fact,
\(\FlipMatrix\) is its own inverse. It is easy to check that
\(\FlipMap\) is an automorphism of~\(\BQbody{\setlift{V}}\), and that
\begin{equation}
  \label{eq:flip-BQbody-isomorphism}
  \FlipMap\paren[\big]{
    \BQbody{\setlift{V}}'
  }
  =
  \BQbody{\setlift{V}}''.
\end{equation}

\begin{corollary}
  \label{cor:BQbody-vertices}
  Let \(V\) be a finite set. Let \(\ConvexSetLifted \in
  \set[\big]{\BQbody{\setlift{V}}, \BQbody{\setlift{V}}',
    \BQbody{\setlift{V}}''}\). Then a point \(\Xh\)
  of~\(\ConvexSetLifted\) is a vertex of~\(\ConvexSetLifted\) if and
  only if \(\rank(\Xh) = 1\). Thus, the vertices
  of~\(\ConvexSetLifted\) are precisely the matrices of the form
  \(\oprodsym{(1 \oplus \incidvector{S})}\) where \(S \subseteq V\).
\end{corollary}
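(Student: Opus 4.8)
The plan is to recognize each of the three bodies as an instance of the spectrahedron $\ConvexSetLifted$ handled in Theorem~\ref{thm:lifted-TH-vertex-is-rank-one}, or, in the case of $\BQbody{\setlift{V}}''$, as a nonsingular congruence of such an instance, so that the desired characterization transfers with essentially no new work.

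First I would take $G := (V,\emptyset)$, the edgeless graph on $V$. Since the map $\Acal_E$ acts only on off-diagonal entries of $\Xh[V]$, the constraints defining $\liftedTH(G)$ and $\liftedTH{}'(G)$ for this $G$ coincide with those defining $\BQbody{\setlift{V}}$ and $\BQbody{\setlift{V}}'$, respectively: the only ostensibly extra requirement, namely $\Xh_{ii}\ge 0$ for $i\in V$ hidden inside ``$\Xh[V]\ge 0$'', is automatic for $\Xh\in\Psd{\setlift{V}}$. Hence $\BQbody{\setlift{V}}=\liftedTH(G)$ and $\BQbody{\setlift{V}}'=\liftedTH{}'(G)$, and Corollary~\ref{cor:liftedTH-vertices} (which rests on Theorem~\ref{thm:lifted-TH-vertex-is-rank-one}) immediately gives that a point of either body is a vertex if and only if it has rank one; since every subset of $V$ is a stable set of the edgeless graph, these vertices are precisely the matrices $\oprodsym{(1\oplus\incidvector{S})}$ with $S\subseteq V$.

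For $\BQbody{\setlift{V}}''$ I would invoke~\eqref{eq:flip-BQbody-isomorphism}, which identifies $\BQbody{\setlift{V}}''=\FlipMap(\BQbody{\setlift{V}}')=\CongMap{\FlipMatrix}(\BQbody{\setlift{V}}')$, where $\FlipMatrix$ is nonsingular (indeed its own inverse). Since $\BQbody{\setlift{V}}'$ has the form to which Lemma~\ref{lemma:invariance} applies, taking $L:=\FlipMatrix$ there: part~\eqref{list:invariance-cone} shows that $\CongMap{\FlipMatrix}$ sends each vertex of $\BQbody{\setlift{V}}'$ to a vertex of $\BQbody{\setlift{V}}''$ while preserving the dimension of the normal cone, and part~\eqref{list:invariance-nullspace} shows it preserves rank. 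As $\CongMap{\FlipMatrix}$ is a bijection of $\BQbody{\setlift{V}}'$ onto $\BQbody{\setlift{V}}''$ that is its own inverse, every $\Yh\in\BQbody{\setlift{V}}''$ equals $\CongMap{\FlipMatrix}(\Xh)$ for the unique $\Xh:=\CongMap{\FlipMatrix}(\Yh)\in\BQbody{\setlift{V}}'$, so $\Yh$ is a vertex of $\BQbody{\setlift{V}}''$ if and only if $\Xh$ is a vertex of $\BQbody{\setlift{V}}'$, equivalently $\rank(\Xh)=1$, equivalently $\rank(\Yh)=1$. Finally, using $\FlipMatrix(1\oplus\incidvector{S})=1\oplus\incidvector{V\drop S}$ we get $\CongMap{\FlipMatrix}\paren[\big]{\oprodsym{(1\oplus\incidvector{S})}}=\oprodsym{(1\oplus\incidvector{V\drop S})}$, and as $S$ runs over all subsets of $V$ so does $V\drop S$; this yields the stated list of vertices of $\BQbody{\setlift{V}}''$.

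The argument is essentially bookkeeping rather than substance; the one step deserving attention is the first, namely matching the constraint systems, where one uses that $\Acal_E$ never touches diagonal entries so that the literal definitions of $\BQbody{\setlift{V}}$, $\BQbody{\setlift{V}}'$ and of $\liftedTH$, $\liftedTH{}'$ of the edgeless graph agree. The nondegeneracy implicit in Theorem~\ref{thm:lifted-TH-vertex-is-rank-one} for these sets is witnessed, for instance, by the positive definite matrix $2^{-\card{V}}\sum_{S\subseteq V}\oprodsym{(1\oplus\incidvector{S})}$, which by part~\eqref{list:invariance-slater} of Lemma~\ref{lemma:invariance} is inherited by $\BQbody{\setlift{V}}''$ as well.
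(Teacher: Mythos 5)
Your proposal is correct and follows essentially the same route as the paper: identify \(\BQbody{\setlift{V}}\) and \(\BQbody{\setlift{V}}'\) with \(\liftedTH\) and \(\liftedTH{}'\) of the edgeless graph \(\overline{K_V}\) and invoke Corollary~\ref{cor:liftedTH-vertices}, then transfer to \(\BQbody{\setlift{V}}''\) via the flip isomorphism~\eqref{eq:flip-BQbody-isomorphism} together with Lemma~\ref{lemma:invariance}. Your additional remarks (the diagonal nonnegativity being automatic, rank and normal-cone preservation under \(\CongMap{\FlipMatrix}\), and the explicit Slater point) merely spell out details the paper leaves implicit.
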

\begin{proof}
  For \(\ConvexSetLifted \in \set[\big]{\BQbody{\setlift{V}},
    \BQbody{\setlift{V}}'}\), this follows from
  Corollary~\ref{cor:liftedTH-vertices} via
  Lemma~\ref{lemma:invariance}, since \(\BQbody{\setlift{V}} =
  \liftedTH\paren{\overline{K_V}}\) and \(\BQbody{\setlift{V}}'=
  \liftedTH{}'\paren{\overline{K_V}}\), where \(K_V\) denotes the
  complete graph on~\(V\). For \(\ConvexSetLifted =
  \BQbody{\setlift{V}}''\), this follows from the previous sentence
  together with~\eqref{eq:flip-BQbody-isomorphism} and
  Lemma~\ref{lemma:invariance}.
\end{proof}

Let \(V\) be a finite set. Define
\begin{gather*}
  \Elliptope{\setlift{V}}'
  \coloneqq
  \setst[\big]{
    \Xh \in \Elliptope{\setlift{V}}
  }{
    \iprod[\big]{
      \Symmetrize\paren[\big]{
        \oprod{
          (e_0+e_i)
        }{
          (e_0+e_j)
        }
      }
    }{
      \Xh
    }
    \geq
    0,\,
    \forall ij \in \tbinom{V}{2}
  },
  \\
  \Elliptope{\setlift{V}}''
  \coloneqq
  \setst[\big]{
    \Xh \in \Elliptope{\setlift{V}}
  }{
    \iprod[\big]{
      \Symmetrize\paren[\big]{
        \oprod{
          (e_0-e_i)
        }{
          (e_0-e_j)
        }
      }
    }{
      \Xh
    }
    \geq
    0,\,
    \forall ij \in \tbinom{V}{2}
  }.
\end{gather*}
Like~\(\Elliptope{\setlift{V}}\), these sets are also relaxations for
the \(\conv\setst[\big]{\oprodsym{(1 \oplus x)}}{x \in
  \set{\pm1}^V}\), which is a variant of the cut polytope. Also, set
\begin{equation*}
  \SignToIncidMatrix
  \coloneqq
  \thalf
  \sum_{
    \mathclap{i \in \setlift{V}}
  }
  \oprod{
    e_i
  }{
    (e_0 + e_i)
  }
  \in
  \Reals^{
    \paren{\setlift{V}}
    \times
    \paren{\setlift{V}}
  }.
\end{equation*}
Note that \(\SignToIncidMatrix \paren[\big]{1 \oplus (\incidvector{S}
  - \incidvector{V \drop S})} = 1 \oplus \incidvector{S}\) for each
\(S \subseteq V\). It is easy to check that \(\SignToIncidMatrix\) is
invertible and
\begin{subequations}
  \label{eq:elliptopes-from-BQbodies}
  \begin{gather}
    \SignToIncidMap(\Elliptope{\setlift{V}})
    =
    \BQbody{\setlift{V}},
    \\
    \SignToIncidMap(\Elliptope{\setlift{V}}')
    =
    \BQbody{\setlift{V}}',
    \\
    \SignToIncidMap(\Elliptope{\setlift{V}}'')
    =
    \BQbody{\setlift{V}}''.
  \end{gather}
\end{subequations}
The linear isomorphism~\(\SignToIncidMap\) is quite interesting in the
sense that it also maps the cut polytope to the boolean quadric
polytope, the sets for which~\(\Elliptope{\setlift{V}}\)
and~\(\BQbody{\setlift{V}}\) are relaxations, respectively;
see~\cite{DeSimone90a, LaurentPR97a}.
\begin{corollary}
  \label{cor:elliptope-variants-vertices}
  Let \(V\) be a finite set. Let \(\ConvexSetLifted \in
  \set[\big]{\Elliptope{\setlift{V}}, \Elliptope{\setlift{V}}',
    \Elliptope{\setlift{V}}''}\). Then a point \(\Xh\) of
  \(\,\ConvexSetLifted\) is a vertex of~\(\,\ConvexSetLifted\) if and
  only if \(\rank(\Xh) = 1\). Thus, the vertices
  of~\(\,\ConvexSetLifted\) are precisely the matrices of the form
  \(\oprodsym{(1 \oplus x_S)}\) where \(x_S = \incidvector{S} -
  \incidvector{V \drop S}\) for some \(S \subseteq V\).
\end{corollary}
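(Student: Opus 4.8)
The plan is to transport everything across the linear isomorphism~$\SignToIncidMap$, exactly as the three identities collected in~\eqref{eq:elliptopes-from-BQbodies} invite us to do. Write $L \coloneqq \SignToIncidMatrix$, so that $\SignToIncidMap = \CongMap{L}$ is a nonsingular congruence on~$\Sym{\setlift{V}}$. First I would observe that each of $\Elliptope{\setlift{V}}$, $\Elliptope{\setlift{V}}'$ and~$\Elliptope{\setlift{V}}''$ is of the form $\setst{X \in \Psd{\setlift{V}}}{\Acal(X) = a,\ \Bcal(X) \leq b}$ that Lemma~\ref{lemma:invariance} requires: the equality $\diag(X) = \ones$ supplies $\Acal$ and~$a$, while the additional inequality constraints defining $\Elliptope{\setlift{V}}'$ and $\Elliptope{\setlift{V}}''$ (rewritten, after a sign flip, as $\leq 0$ inequalities) supply $\Bcal$ and~$b$. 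Then~\eqref{eq:elliptopes-from-BQbodies} says precisely that $\CongMap{L}$ carries each such $\ConvexSetLifted$ onto the corresponding member of $\set{\BQbody{\setlift{V}}, \BQbody{\setlift{V}}', \BQbody{\setlift{V}}''}$.

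With this set-up, Lemma~\ref{lemma:invariance}\eqref{list:invariance-cone} gives, for every $\Xh \in \ConvexSetLifted$, the equality $\dim\paren[\big]{\NormalCone[\big]{\CongMap{L}(\ConvexSetLifted)}{\CongMap{L}(\Xh)}} = \dim\paren[\big]{\NormalCone{\ConvexSetLifted}{\Xh}}$, so that $\Xh$ is a vertex of~$\ConvexSetLifted$ exactly when $\CongMap{L}(\Xh)$ is a vertex of~$\CongMap{L}(\ConvexSetLifted)$; and Lemma~\ref{lemma:invariance}\eqref{list:invariance-nullspace} gives $\rank\paren[\big]{\CongMap{L}(\Xh)} = \rank(\Xh)$. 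Since $\CongMap{L}(\ConvexSetLifted)$ is one of $\BQbody{\setlift{V}}$, $\BQbody{\setlift{V}}'$, $\BQbody{\setlift{V}}''$, Corollary~\ref{cor:BQbody-vertices} applies to it and says that $\CongMap{L}(\Xh)$ is a vertex of~$\CongMap{L}(\ConvexSetLifted)$ if and only if $\rank\paren[\big]{\CongMap{L}(\Xh)} = 1$. Chaining these three equivalences yields that $\Xh$ is a vertex of~$\ConvexSetLifted$ if and only if $\rank(\Xh) = 1$.

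It then remains to make the vertex description explicit, which I would do by pulling back the corresponding description in Corollary~\ref{cor:BQbody-vertices}. That corollary identifies the vertices of~$\CongMap{L}(\ConvexSetLifted)$ as exactly the matrices $\oprodsym{(1 \oplus \incidvector{S})}$ with $S \subseteq V$, so the vertices of~$\ConvexSetLifted$ are their $\CongMap{L}$-preimages. Using the defining property $L\paren[\big]{1 \oplus x_S} = 1 \oplus \incidvector{S}$ of~$\SignToIncidMatrix$, where $x_S \coloneqq \incidvector{S} - \incidvector{V \drop S}$, together with the fact that $\CongMap{L}$ sends a rank-one matrix $\oprodsym{v}$ to $\oprodsym{(Lv)}$, we get $\CongMap{L}\paren[\big]{\oprodsym{(1 \oplus x_S)}} = \oprodsym{(1 \oplus \incidvector{S})}$, hence $\CongMap{L}^{-1}\paren[\big]{\oprodsym{(1 \oplus \incidvector{S})}} = \oprodsym{(1 \oplus x_S)}$, which is the asserted form. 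As a sanity check, one could instead argue directly that a rank-one $vv^{\transp} \in \Elliptope{\setlift{V}}$ satisfies $\diag(vv^{\transp}) = \ones$, so $v \in \set{\pm1}^{\setlift{V}}$, and after negating $v$ if necessary so that its $0$-entry equals~$1$ one obtains $v = 1 \oplus x$ with $x \in \set{\pm1}^V$, i.e.\ $x = x_S$ for $S \coloneqq \setst{i \in V}{x_i = 1}$.

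I do not anticipate a real obstacle in this argument, since the substantive content already lives in Theorem~\ref{thm:lifted-TH-vertex-is-rank-one} and its corollaries; the steps above only need elementary bookkeeping. The two points that warrant a moment of care are checking that the three elliptope variants genuinely fit the template of Lemma~\ref{lemma:invariance} and that the congruence images are exactly those asserted in~\eqref{eq:elliptopes-from-BQbodies}, both of which become immediate once the definitions of $\SignToIncidMatrix$, $\Elliptope{\setlift{V}}'$ and $\Elliptope{\setlift{V}}''$ are unwound.
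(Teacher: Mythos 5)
Your proposal is correct and follows essentially the same route as the paper: the paper's proof is exactly the one-line reduction via \eqref{eq:elliptopes-from-BQbodies}, Lemma~\ref{lemma:invariance}, and Corollary~\ref{cor:BQbody-vertices}, which you have simply spelled out in detail (including the pullback of the explicit vertex description through $\CongMap{\SignToIncidMatrix}$). No gaps; the extra bookkeeping you flag is indeed all that is needed.
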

\begin{proof}
  Immediate from Corollary~\ref{cor:BQbody-vertices}
  and~\eqref{eq:elliptopes-from-BQbodies} via
  Lemma~\ref{lemma:invariance}.
\end{proof}

Corollary~\ref{cor:elliptope-variants-vertices} allows us to gauge the
extent to which Corollary~\ref{cor:liftedTH-vertices} generalizes
Corollary~\ref{cor:elliptope-vertices}: the latter result
characterizes the vertices for one convex set for each positive
integer~\(n\), whereas the former does the same for all positive
integers~\(n\) and all graphs with~\(n\) nodes.

Kleinberg and Goemans~\cite{KleinbergG98a} presented SDP relaxations
for the vertex cover problem. For a graph \(G = (V,E)\), the feasible
regions of their relaxations are:
\begin{gather*}
  \liftedKleinbergGoemansVC(G)
  \coloneqq
  \setst[\big]{
    \Xh \in \Elliptope{\setlift{V}}
  }{
    \iprod[\big]{
      \Symmetrize\paren[\big]{
        \oprod{
          (e_0-e_i)
        }{
          (e_0-e_j)
        }
      }
    }{
      \Xh
    }
    =
    0,\,
    \forall ij \in E
  },
  \\
  \liftedKleinbergGoemansVC{}'(G)
  \coloneqq
  \liftedKleinbergGoemansVC(G) \cap \Elliptope{\setlift{V}}''.
\end{gather*}
\begin{corollary}
  \label{cor:VC-vertices}
  Let \(G = (V,E)\) be a graph. Let \(\ConvexSetLifted \in
  \set[\big]{\liftedKleinbergGoemansVC(G),
    \liftedKleinbergGoemansVC{}'(G)}\). Then a point \(\Xh\)
  of~\(\ConvexSetLifted\) is a vertex of~\(\ConvexSetLifted\) if and
  only if \(\rank(\Xh) = 1\). Thus, the vertices
  of~\(\ConvexSetLifted\) are precisely the matrices of the form
  \(\oprodsym{(1 \oplus \incidvector{S})}\) where \(S \subseteq V\) is
  a vertex cover of~\(G\).
\end{corollary}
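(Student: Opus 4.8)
The plan is to realize both Kleinberg--Goemans spectrahedra as congruence images of $\liftedTH(G)$ and $\liftedTH{}'(G)$ and then invoke Lemma~\ref{lemma:invariance} together with Corollary~\ref{cor:liftedTH-vertices}. Set $L \coloneqq \SignToIncidMatrix^{-1}\FlipMatrix$; the aim is to prove
\begin{equation*}
  \CongMap{L}\paren[\big]{\liftedTH(G)} = \liftedKleinbergGoemansVC(G)
  \qquad\text{and}\qquad
  \CongMap{L}\paren[\big]{\liftedTH{}'(G)} = \liftedKleinbergGoemansVC{}'(G).
\end{equation*}
Since congruences satisfy $\CongMap{A}\compose\CongMap{B} = \CongMap{AB}$, we have $\CongMap{L} = \SignToIncidMap^{-1}\compose\FlipMap$, so it is enough to push the feasible regions through $\FlipMap$ and then $\SignToIncidMap^{-1}$. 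First, $\FlipMap$ is an automorphism of $\BQbody{\setlift{V}}$ with $\FlipMatrix^{\transp}e_i = e_0 - e_i$ for all $i \in V$, so conjugating the edge constraints $\Acal_E(\Xh[V]) = 0$ (i.e.\ $\Xh_{ij} = 0$ for $ij \in E$) of $\liftedTH(G)$ by $\FlipMatrix$ turns them into the constraints $\iprod[\big]{\Symmetrize(\oprod{(e_0-e_i)}{(e_0-e_j)})}{\Xh} = 0$; hence $\FlipMap(\liftedTH(G))$ is the set of $\Xh \in \BQbody{\setlift{V}}$ satisfying these latter equalities. Using moreover $\liftedTH{}'(G) = \liftedTH(G) \cap \BQbody{\setlift{V}}'$ and~\eqref{eq:flip-BQbody-isomorphism}, the image $\FlipMap(\liftedTH{}'(G))$ is the same set intersected with $\BQbody{\setlift{V}}''$. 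Second, $\SignToIncidMap^{-1}$ maps $\BQbody{\setlift{V}}$ onto $\Elliptope{\setlift{V}}$ and $\BQbody{\setlift{V}}''$ onto $\Elliptope{\setlift{V}}''$ by~\eqref{eq:elliptopes-from-BQbodies}, and a short computation using $\SignToIncidMatrix^{\transp}(e_0-e_i) = \thalf(e_0-e_i)$ for $i \in V$ shows that it carries the equalities above into the defining edge equalities of $\liftedKleinbergGoemansVC(G)$, up to the harmless positive factor $\thalf$. Composing the two stages gives the displayed identities.

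Granting these, Lemma~\ref{lemma:invariance} shows that a point $\Xh$ of $\ConvexSetLifted$ is a vertex if and only if $\CongMap{L}^{-1}(\Xh)$ is a vertex of $\liftedTH(G)$ (when $\ConvexSetLifted = \liftedKleinbergGoemansVC(G)$) or of $\liftedTH{}'(G)$ (when $\ConvexSetLifted = \liftedKleinbergGoemansVC{}'(G)$), and that $\rank(\Xh) = \rank\paren[\big]{\CongMap{L}^{-1}(\Xh)}$. By Corollary~\ref{cor:liftedTH-vertices}, the vertices of $\liftedTH(G)$ and of $\liftedTH{}'(G)$ are exactly their rank-one elements, namely the matrices $\oprodsym{(1 \oplus \incidvector{T})}$ with $T$ a stable set of $G$; hence $\Xh \in \ConvexSetLifted$ is a vertex if and only if $\rank(\Xh) = 1$. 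To identify these rank-one vertices, push $\oprodsym{(1\oplus\incidvector{T})}$ forward through $\CongMap{L}$: since $\FlipMatrix(1\oplus\incidvector{T}) = 1\oplus\incidvector{V\drop T}$ and $\SignToIncidMatrix^{-1}(1\oplus\incidvector{S}) = 1\oplus(\incidvector{S}-\incidvector{V\drop S})$, we get $L(1\oplus\incidvector{T}) = 1\oplus(\incidvector{V\drop T}-\incidvector{T})$. Writing $S \coloneqq V\drop T$, this vector equals $1 \oplus x_S$ with $x_S \coloneqq \incidvector{S}-\incidvector{V\drop S}$, and $S$ is a vertex cover of $G$ exactly when $T = V\drop S$ is a stable set of $G$. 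Thus the vertices of $\ConvexSetLifted$ are precisely the matrices $\oprodsym{(1\oplus x_S)}$ for which $S$ is a vertex cover of~$G$.

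I expect the only real work to be the bookkeeping in the first paragraph. In particular one must verify the identity $\liftedTH{}'(G) = \liftedTH(G)\cap\BQbody{\setlift{V}}'$ — which holds because, for $\Xh \in \BQbody{\setlift{V}}$, imposing $\Xh_{ij} = 0$ for $ij \in E$ together with $\Xh[V] \geq 0$ is the same as imposing $\Acal_E(\Xh[V]) = 0$ together with $\Acal_{\overline{E}}(\Xh[V]) \geq 0$, the diagonal inequalities being automatic from $\Xh \succeq 0$ — and one must check that conjugating the equalities $\Xh_{ij} = 0$ successively by $\FlipMatrix$ and $\SignToIncidMatrix^{-1}$ reproduces the Kleinberg--Goemans edge equalities exactly (discarding positive scalar factors, which does not change the zero set). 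Once this is in place, the rank characterization of the vertices and the explicit list of vertices follow immediately from Lemma~\ref{lemma:invariance} and Corollary~\ref{cor:liftedTH-vertices}.
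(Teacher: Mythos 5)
Your proposal is correct and follows exactly the paper's route: the paper's own proof likewise observes that \(\liftedKleinbergGoemansVC(G) = \paren{\SignToIncidMap^{-1}\compose\FlipMap}\paren[\big]{\liftedTH(G)}\) and \(\liftedKleinbergGoemansVC{}'(G) = \paren{\SignToIncidMap^{-1}\compose\FlipMap}\paren[\big]{\liftedTH{}'(G)}\) and then invokes Lemma~\ref{lemma:invariance} together with Corollary~\ref{cor:liftedTH-vertices}, so your first paragraph merely fills in the bookkeeping the paper leaves implicit. Note also that the explicit description you derive, \(\oprodsym{(1\oplus x_S)}\) with \(x_S = \incidvector{S}-\incidvector{V\drop S}\) and \(S\) a vertex cover, is the form consistent with \(\ConvexSetLifted\subseteq\Elliptope{\setlift{V}}\) (having unit diagonal); the expression \(\oprodsym{(1\oplus\incidvector{S})}\) appearing in the statement is evidently a slip.
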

\begin{proof}
  Immediate from Corollary~\ref{cor:liftedTH-vertices} via
  Lemma~\ref{lemma:invariance}, since we have
  \begin{gather*}
    \liftedKleinbergGoemansVC(G)
    =
    \paren*{
      \SignToIncidMap^{-1}
      \compose
      \FlipMap
    }
    \paren[\big]{
      \,
      \liftedTH(G)
    },
    \\
    \liftedKleinbergGoemansVC{}'(G)
    =
    \paren*{
      \SignToIncidMap^{-1}
      \compose
      \FlipMap
    }
    \paren[\big]{
      \,
      \liftedTH{}'(G)
    }.\qedhere
  \end{gather*}
\end{proof}

The Lovász theta number is sometimes presented using another SDP, in a
smaller dimensional space. We shall now show that the vertices of the
feasible region of this SDP do not coincide with what would be
considered its exact solutions:
\begin{theorem}
  \label{thm:theta3-vertices}
  Let \(V\) be a finite set, and let \(E^+, E^- \subseteq
  \tbinom{V}{2}\). Set
  \begin{equation*}
    \ConvexSet
    \coloneqq
    \setst*{
      X \in \Psd{V}
    }{
      \trace(X) = 1,\,
      \Acal_{E^+}(X) \geq 0,\,
      \Acal_{E^-}(X) \leq 0
    },
  \end{equation*}
  \(H \coloneqq (V,E^{+1} \cup E^{-1})\), and \(n \coloneqq
  \card{V}\). Then the set of vertices of~\(\ConvexSet\) is
  \(\setst[\big]{\oprodsym{e_k}}{\deg_H(k) = n-1}\).
\end{theorem}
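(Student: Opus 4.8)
Here is how I would attack the statement, using Theorem~\ref{thm:dim-normal-cone-sdp} as the main tool in both of its forms.

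\medskip
\noindent\textit{Set-up.} First I would put $\ConvexSet$ into the conic form of Theorem~\ref{thm:dim-normal-cone-sdp}, with $\Acal = \trace$ (so $a = 1$ and the lone matrix is $A_1 = I$) and $\Bcal \coloneqq (-\Acal_{E^+}) \oplus \Acal_{E^-}$, $b = 0$.  The matrix $\tfrac1n I$ is positive definite, has trace $1$, and has all off-diagonal entries equal to $0$, so it lies in $\ConvexSet \cap \Pd{V}$ and the Slater hypothesis is met.  For $\Xb \in \ConvexSet$, I would write $\mathcal A$ for the set of pairs $ij \in E^+ \cup E^-$ with $\Xb_{ij} = 0$ (the active inequalities); then, in the notation of Theorem~\ref{thm:dim-normal-cone-sdp}, $\Null(\Acal) \cap \Null(P \compose \Bcal) = \setst{Y \in \Sym{V}}{\trace Y = 0,\ Y_{ij} = 0\ \forall ij \in \mathcal A}$, and by~\eqref{eq:dim-normal-cone-sdp} a feasible $\Xb$ is a vertex if and only if this set meets $\linspan\setst{\Symmetrize(\Xb \oprod{u}{v})}{u,v \in \Reals^V}$ only in $0$.

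\medskip
\noindent\textit{Main step: a vertex has one-hot diagonal.}  Let $\Xb$ be a vertex and put $T \coloneqq \supp(\diag\Xb)$, which is nonempty since $\trace\Xb = 1$.  I would argue that $\card T \ge 2$ is impossible.  Fix $k \in T$ (so $\Xb_{kk} > 0$) and set $D_k \coloneqq \tfrac{1}{\Xb_{kk}}\Symmetrize(\Xb \oprod{e_k}{e_k})$.  A short computation gives $[D_k]_{ij} = \tfrac{1}{2\Xb_{kk}}\bigl(\Xb_{ik}\Iverson{j=k} + \Xb_{jk}\Iverson{i=k}\bigr)$, so $\diag(D_k) = e_k$ (hence $\trace D_k = 1$), and for $ij \in \mathcal A$ the entry $[D_k]_{ij}$ is either $0$ (when $k \notin \set{i,j}$) or a multiple of $\Xb_{ij} = 0$ (when $k \in \set{i,j}$); thus $D_k$ vanishes on $\mathcal A$.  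Now $Y \coloneqq \Xb - D_k$ is traceless, vanishes on $\mathcal A$ (since $\Xb$ does, by definition of $\mathcal A$), and lies in $\linspan\setst{\Symmetrize(\Xb\oprod{u}{v})}{u,v}$, because $D_k$ is a scalar multiple of $\Symmetrize(\Xb\oprod{e_k}{e_k})$ and $\Xb$ itself lies in that span (by~\eqref{eq:dim-normal-cone-sdp-aux1}, since $\Xb \perp \bigl(\Psd{V} \cap \set{\Xb}^\perp\bigr)$).  Hence $Y$ lies in the intersection that must be $\set{0}$ at a vertex; but $\diag Y = \diag\Xb - e_k \neq 0$ as $\card T \ge 2$, a contradiction.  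So $\card T = 1$, say $T = \set{k}$, whence $\diag\Xb = e_k$; and since $\Xb \succeq 0$ has vanishing diagonal off $k$, its rows and columns indexed by $V \drop \set{k}$ vanish, so $\Xb = \oprodsym{e_k}$.

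\medskip
\noindent\textit{Which $\oprodsym{e_k}$ are vertices (and the reverse inclusion).}  Since $\oprodsym{e_k} \in \ConvexSet$ (trace $1$, off-diagonal entries $0$) and $\Bcal(\oprodsym{e_k}) - b = 0$, no index is dropped in~\eqref{eq:dim-normal-cone-sdp-rank-one}, so $\oprodsym{e_k}$ is a vertex iff $\setst{A_i e_k}{i \in [p]} \cup \setst{B_i e_k}{i \in [q]}$ spans $\Reals^V$.  Here $A_1 e_k = e_k$, and each $B_i$ is $\pm\Symmetrize(\oprod{e_p}{e_q})$ for some $pq \in E^+ \cup E^-$, with $\Symmetrize(\oprod{e_p}{e_q})e_k$ equal to $\pm\tfrac12 e_\ell$ when $\set{p,q} = \set{k,\ell}$ and to $0$ otherwise.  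So the span equals $\linspan\bigl(\set{e_k} \cup \setst{e_\ell}{\ell \in V,\ k\ell \in E^+ \cup E^-}\bigr)$, which is all of $\Reals^V$ exactly when $k$ is adjacent in $H$ to every other vertex, i.e.\ when $\deg_H(k) = n-1$.  Combining with the main step, the vertices of $\ConvexSet$ are precisely $\setst{\oprodsym{e_k}}{\deg_H(k) = n-1}$.

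\medskip
\noindent\textit{Expected obstacle.}  The crux is guessing the right perturbation $D_k$ — recognizing that $\Xb - D_k$ simultaneously annihilates the trace functional, kills every active inequality row, and stays inside the subspace $\linspan\setst{\Symmetrize(\Xb\oprod{u}{v})}{u,v}$ orthogonal to the conjugate face; once $D_k$ is in hand everything else is routine bookkeeping with Theorem~\ref{thm:dim-normal-cone-sdp}.  The one point requiring care is the interaction between $\mathcal A$ and $T$: $D_k$ is well defined only for $k \in T$, and the fact that its off-diagonal entries land solely on inactive positions uses exactly that $\Xb_{ij} = 0$ for $ij \in \mathcal A$.
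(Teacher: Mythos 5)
Your proof is correct and follows essentially the same route as the paper: both halves rest on Theorem~\ref{thm:dim-normal-cone-sdp}, with the rank-one criterion~\eqref{eq:dim-normal-cone-sdp-rank-one} handling exactly which \(\oprodsym{e_k}\) are vertices. The only cosmetic difference is in the first half, where the paper places the traceless matrix \(2\Xb_{\ell\ell}\Symmetrize(\Xb\oprodsym{e_k}) - 2\Xb_{kk}\Symmetrize(\Xb\oprodsym{e_\ell})\) in the null intersection for each pair \(k,\ell\), while you use \(\Xb - \tfrac{1}{\Xb_{kk}}\Symmetrize(\Xb\oprodsym{e_k})\) for a single \(k\) (together with the observation that \(\Xb\) itself lies in the span); both yield the same conclusion.
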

\begin{proof}
  We first show that
  \begin{equation}
    \label{eq:theta3-vertices-aux1}
    \text{
      if \(\Xb\) is a vertex of~\(\ConvexSet\), then \(\Xb =
      \oprodsym{e_k}\) for some \(k \in V\).
    }
  \end{equation}
  Let \(\Xb\) be a vertex of~\(\ConvexSet\). Let \(k, \ell \in V\) be
  distinct. Set
  \begin{equation*}
    \thalf D
    \coloneqq
    \Xb_{\ell\ell}
    \Symmetrize\paren{
      \Xb \oprodsym{e_k}
    }
    -
    \Xb_{kk}
    \Symmetrize\paren{
      \Xb \oprodsym{e_{\ell}}
    }.
  \end{equation*}
  If \(i,j \in V\), then
  \begin{equation*}
    \begin{split}
      D_{ij}
      & =
      \Xb_{\ell\ell}
      \paren[\Big]{
        \Xb_{ik}
        \Iverson{k = j}
        +
        \Iverson{k = i}
        \Xb_{kj}
      }
      -
      \Xb_{kk}
      \paren[\Big]{
        \Xb_{i\ell}
        \Iverson{\ell = j}
        +
        \Iverson{\ell = i}
        \Xb_{\ell j}
      }
      \\
      & =
      \Xb_{\ell\ell}
      \Xb_{ij}
      \paren[\big]{
        \Iverson{k = j}
        +
        \Iverson{k = i}
      }
      -
      \Xb_{kk}
      \Xb_{ij}
      \paren[\big]{
        \Iverson{\ell = j}
        +
        \Iverson{\ell = i}
      }
      \\
      & =
      \Xb_{ij}
      \sqbrac[\Big]{
        \Xb_{\ell\ell}
        \paren[\big]{
          \Iverson{k = j}
          +
          \Iverson{k = i}
        }
        -
        \Xb_{kk}
        \paren[\big]{
          \Iverson{\ell = j}
          +
          \Iverson{\ell = i}
        }
      }.
    \end{split}
  \end{equation*}
  For \(ij \in \tbinom{V}{2}\), we clearly have \(D_{ij} = 0\)
  whenever \(\Xb_{ij} = 0\). We also have
  \begin{equation*}
    \trace(D)
    =
    D_{kk}
    +
    D_{\ell\ell}
    =
    \paren[\big]{
      2 \Xb_{kk} \Xb_{\ell\ell}
    }
    +
    \paren[\big]{
      - 2 \Xb_{\ell\ell} \Xb_{kk}
    }
    =
    0.
  \end{equation*}
  Note that \(\card{V}^{-1} I\) lies in \(\ConvexSet \cap \Pd{V}\), so
  we may apply Theorem~\ref{thm:dim-normal-cone-sdp} to get \(D = 0\).
  Thus, \(0 = D_{kk} = 2 \Xb_{kk} \Xb_{\ell\ell}\). Since~\(k\)
  and~\(\ell\) were arbitrary, \eqref{eq:theta3-vertices-aux1} follows
  from~\(\trace(\Xb) = 1\).

  We will now show that,
  \begin{equation}
    \label{eq:theta3-vertices-aux2}
    \text{
      for \(k \in V\), the point \(\oprodsym{e_k}\)
      is a vertex of~\(\ConvexSet\) if
      and only if \(\deg_H(k) = n-1\).
    }
  \end{equation}
  Let \(k \in V\). Set \(E \coloneqq E^+ \cup E^-\). By
  Theorem~\ref{thm:dim-normal-cone-sdp}, the point \(\oprodsym{e_k}\)
  is a vertex of~\(\ConvexSet\) if and only if \(\set{e_k} \cup
  \setst{\Symmetrize(\oprod{e_i}{e_j}) e_k}{ij \in E}\)
  spans~\(\Reals^V\). The latter set is \(\set{e_k} \cup
  \setst{\Iverson{j=k} e_i + \Iverson{i=k} e_j}{ij \in E} = \set{e_k}
  \cup \setst{e_j}{jk \in E}\), so it spans~\(\Reals^V\) precisely
  when \(\deg_H(k) = n-1\).

  The result now follows from~\eqref{eq:theta3-vertices-aux1}
  and~\eqref{eq:theta3-vertices-aux2}.
\end{proof}

\begin{corollary}
  Let \(G = (V,E)\) be a graph. Set \(P \coloneqq \setst{k \in
    V}{\deg_G(k) = \card{V} - 1}\). Then
  \begin{rmlist*}
  \item the set of vertices of \(\setst{X \in \Psd{V}}{\trace(X) =
      1,\, \Acal_E(X) = 0}\) is \(\setst{\oprodsym{e_k}}{k \in P}\);
  \item the set of vertices of \(\setst{X \in \Psd{V}}{\trace(X) =
      1,\, \Acal_E(X) = 0,\, \Acal_{\overline{E}}(X) \geq 0}\) is
    \(\setst{\oprodsym{e_k}}{k \in V}\);
  \item the set of vertices of \(\setst{X \in \Psd{V}}{\trace(X) =
      1,\, \Acal_E(X) \leq 0}\) is \(\setst{\oprodsym{e_k}}{k \in
      P}\).
  \end{rmlist*}
\end{corollary}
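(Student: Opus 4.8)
This follows at once from Theorem~\ref{thm:theta3-vertices}, and the plan is simply to specialize the edge sets~$E^+$ and~$E^-$ appearing there to each of the three items; in every case one then reads off the feasible region named in the corollary together with the graph~$H$, so that the vertex description is immediate.

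For item~(i), I would take $E^+ \coloneqq E^- \coloneqq E$. Imposing both $\Acal_{E^+}(X) \geq 0$ and $\Acal_{E^-}(X) \leq 0$ is then equivalent to imposing $\Acal_E(X) = 0$, so the set $\ConvexSet$ of Theorem~\ref{thm:theta3-vertices} becomes $\setst{X \in \Psd{V}}{\trace(X) = 1,\, \Acal_E(X) = 0}$, while $H = (V, E \cup E) = G$. Hence its vertices are $\setst{\oprodsym{e_k}}{\deg_G(k) = \card{V} - 1} = \setst{\oprodsym{e_k}}{k \in P}$. For item~(iii), I would instead take $E^+ \coloneqq \emptyset$ and $E^- \coloneqq E$, so that $\ConvexSet = \setst{X \in \Psd{V}}{\trace(X) = 1,\, \Acal_E(X) \leq 0}$ and again $H = (V, E) = G$, giving the vertex set $\setst{\oprodsym{e_k}}{k \in P}$.

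For item~(ii), the right choice is $E^+ \coloneqq \tbinom{V}{2}$ and $E^- \coloneqq E$. Now $\Acal_{E^+}(X) \geq 0$ forces every off-diagonal entry of~$X$ to be nonnegative, and combined with $\Acal_{E^-}(X) \leq 0$ this pins the $E$-entries to zero while leaving the $\overline E$-entries nonnegative; thus $\ConvexSet = \setst{X \in \Psd{V}}{\trace(X) = 1,\, \Acal_E(X) = 0,\, \Acal_{\overline E}(X) \geq 0}$, and $H = (V, \tbinom{V}{2}) = K_V$ is the complete graph, so $\deg_H(k) = \card{V} - 1$ for every $k \in V$ and the vertex set is $\setst{\oprodsym{e_k}}{k \in V}$.

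Since everything reduces to a direct substitution into Theorem~\ref{thm:theta3-vertices}, there is no genuine obstacle here; the only point that warrants a moment's care is the routine bookkeeping that, in each item, the pair of sign constraints collapses to exactly the equality/inequality system in the statement, and that the associated graph~$H$ is correctly identified as~$G$ in items~(i) and~(iii) and as~$K_V$ in item~(ii).
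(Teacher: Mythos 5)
Your proposal is correct and matches the paper's proof: the paper also derives the corollary directly from Theorem~\ref{thm:theta3-vertices} using exactly the substitutions \(E^+ = E^- = E\), \(E^+ = \tbinom{V}{2},\ E^- = E\), and \(E^+ = \emptyset,\ E^- = E\) (the same choices as in Corollary~\ref{cor:liftedTH-vertices}). Your bookkeeping of how the sign constraints collapse and of the graph \(H\) in each case is the same routine verification the paper leaves implicit.
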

\begin{proof}
  Immediate from Theorem~\ref{thm:theta3-vertices}, as in the proof of
  Corollary~\ref{cor:liftedTH-vertices}.
\end{proof}

The results in this section significantly extend the
combinatorially-inspired spectrahedra whose vertices are completely
understood. However, we do not know the set of vertices of some of
their simplest variants, such as \(\BQbody{\setlift{V}}' \cap
\BQbody{\setlift{V}}''\) or even
\begin{equation*}
  \setst[\big]{
    \Xh \in \BQbody{\setlift{V}}
  }{
    \iprod[\big]{
      \Symmetrize\paren[\big]{
        \oprod{e_i}{(e_i - e_j)}
      }
    }{
      \Xh
    }
    \geq 0,\,
    \forall (i,j) \in V \times V
  };
\end{equation*}
the constraints of the latter usually appear in spectrahedra arising
from the lift-and-project operator of Lovász and
Schrijver~\cite{LovaszS91a}. This is just a hint of the complexity of
the vertex structure of spectrahedra that we warned about in the
introduction. We roughly discuss some other difficulties next.

When considering sufficient conditions which bound the rank of
vertices of a spectrahedron, such as the ones from
Theorem~\ref{thm:normal-cone-modular-rank} and
Proposition~\ref{prop:free-offdiag-rank-bound}, ideally one seeks to
obtain coordinate-free conditions that are easy to check and that have
a built-in detection for a change of basis. Let us use
Theorem~\ref{thm:normal-cone-modular-rank} to explain this. Suppose we
replace the rank hypothesis from that theorem with the condition that
\(A_i A_j = 0\) for distinct \(i,j \in [n]\). Note that we eventually
reach this assumption in~\eqref{eq:dim-normal-cone-from-Null-aux2} in
its proof. Then the modified theorem would be applicable to the
elliptope~\(\Elliptope{\setlift{V}}\), but not to its linear
isomorphic image
\begin{equation*}
  \setst*{
    \Xh \in \Psd{\setlift{V}}
  }{
    \iprod[\big]{\oprodsym{(e_0-2e_i)}}{\Xh} = 1\,
    \forall i \in \setlift{V}
  },
\end{equation*}
which is nothing but \(\BQbody{\setlift{V}}\). What happened in this
case was that we have the following equivalence: there exists a
nonsingular \(L \in \Reals^{n \times n}\) such that \(\CongMap{L}(A_i)
\CongMap{L}(A_j) = 0\) for distinct \(i,j \in [n]\) if and only if the
rank condition from Theorem~\ref{thm:normal-cone-modular-rank} holds.
That is, a simple algebraic condition subsumes an existential
predicate about a convenient basis; the rank condition factors out the
trivial congruences. This is in contrast with the existential
hypothesis from Proposition~\ref{prop:free-offdiag-rank-bound}, which
is harder to check, and thus harder to apply. However,
Theorem~\ref{thm:normal-cone-modular-rank} is not yet entirely
coordinate-free; this may be seen from the fact that it does not apply
directly to~\(\BQbody{\setlift{V}}\) using its description
in~\eqref{eq:BQbody-def}, since the theorem requires the RHS of the
defining linear equations to be nonzero everywhere. In this sense,
Theorem~\ref{thm:normal-cone-modular-rank} still has some room for
improvement.

The algebraic aspects just described have a complementary role to
geometry in some situations. For instance, it is easy to see how to
start with a spectrahedron all of whose vertices have rank one and
transform it into one that has all vertices of rank two; one could
take a direct sum with a constant nonzero block, and apply a
congruence transformation to ``hide'' the triviality of this
transformation. Here the geometric aspect of the transformation is
trivial. However, a broad sufficient condition to bound the rank of
vertices needs to factor out all these congruences. This seems hard to
describe algebraically without an existential hypothesis. On the other
direction, Corollary~\ref{cor:smash-to-rank-one} describes a
transformation of spectrahedra that is trivial in terms of algebra,
but geometrically it modifies the boundary structure drastically.

The above results indicate that the approach presented here and in the
previous literature we cited, may lead
to further fruitful results.  To indicate some of this potential, we
move to some other aspects of the boundary structure beyond the
vertices, but continue to utilize the characterizations of normal cone
and duality themes.

\section{Strict Complementarity}
\label{sec:strict-compl}

We continue considering the consequences of the dual
characterization~\eqref{eq:conic-lp-normal-cone} of the normal cone at
a boundary point which has been very fruitful so far. Note that
Proposition~\ref{prop:conic-lp-normal-cone}, which is a
dual characterization of the normal cone at \(\xb\), shows explicitly
that the normal cone at \(\xb\) of the feasible region of a conic
optimization problem is the Minkowski sum of a polyhedral cone
(defined by~\(\Acal\), \(\Bcal\), and \(\xb\)) and the conjugate of
the minimal face of \(\Cone \subseteq \Euclidean\) containing \(\xb\).
By taking the relative interior of both sides of this
characterization, we shall find a strong connection to strict
complementarity. We first recall a definition by
Pataki~\cite{Pataki96a}.
\begin{definition}
  \label{def:conic-lp-setting}
  Let \(\Cone \subseteq \EuclideanA\) be a pointed closed convex cone
  with nonempty interior. Let \(\Acal \ffrom \EuclideanA \fto
  \EuclideanB^*\) be a linear function. Let \(c \in \EuclideanA^*\)
  and \(b \in \EuclideanB^*\). Set
  \begin{subequations}
    \label{eq:conic-primal-dual-pair}
    \begin{gather}
      \ConvexSet_P
      \coloneqq
      \setst{
        x \in \Cone
      }{
        \Acal(x) = b
      },
      \\
      \ConvexSet_D
      \coloneqq
      \setst{
        s \in \Cone^*
      }{
        s \in \Image(\Acal^*) - c
      }.
    \end{gather}
  \end{subequations}
  We say that a pair \((\xb,\bar{s}) \in \ConvexSet_P \times
  \ConvexSet_D\) is \emph{strictly complementary} if there exists a
  face~\(F\) of~\(\Cone\) such that
  \begin{equation*}
    \xb \in \relint(F)
    \text{ and }
    \bar{s} \in \relint(F^{\triangle}).
  \end{equation*}
\end{definition}
In the above \(F^{\triangle} \coloneqq \Cone^* \cap F^{\perp}\) is the
conjugate face of~\(F\). Note that the condition \(\xb \in
\relint(F)\) for a face~\(F\) of~\(\Cone\) is equivalent to the fact
that~\(F\) is the smallest face of~\(\Cone\) that contains~\(\xb\);
see, e.g., \cite[Theorem~18.1]{Rockafellar97a}. Thus, if we
define~\(F\) as the smallest face of~\(\Cone\) containing~\(\xb\),
then \(F^{\triangle} = \Cone^* \cap \set{\xb}^{\perp}\). With this in
mind, the next observation becomes clear from
Proposition~\ref{prop:conic-lp-normal-cone} (it appears in a slightly
different form in~\cite[Sec.~2]{BolteDL11a}):
\begin{proposition}
  \label{prop:4.1}
  In the context of Definition~\ref{def:conic-lp-setting}, suppose
  that \(\ConvexSet_P \cap \interior(\Cone) \neq \emptyset\). Let
  \(\xb \in \ConvexSet_P\). Then there exists \(\bar{s} \in
  \ConvexSet_D\) such that \((\xb,\bar{s})\) is strictly complementary
  if and only if \(c \in
  \relint\paren{\NormalCone{\ConvexSet_P}{\xb}}\).
\end{proposition}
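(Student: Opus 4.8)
The plan is to combine the dual description of the normal cone from Proposition~\ref{prop:conic-lp-normal-cone} with the standard behaviour of the relative interior under Minkowski sums. Apply Proposition~\ref{prop:conic-lp-normal-cone} with no inequality constraints (i.e.\ $q = 0$), identifying $\EuclideanB^*$ with~$\Reals^p$ as needed. Since $\ConvexSet_P \cap \interior(\Cone) \neq \emptyset$ by hypothesis, and letting $F$ be the smallest face of~$\Cone$ containing~$\xb$ — so that $\xb \in \relint(F)$ and $F^{\triangle} = \Cone^* \cap \set{\xb}^{\perp}$, as recalled just before the statement — the proposition gives
\begin{equation*}
  \NormalCone{\ConvexSet_P}{\xb}
  =
  \Image(\Acal^*) - F^{\triangle}.
\end{equation*}

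Next I would invoke the fact that $\relint(C_1 + C_2) = \relint(C_1) + \relint(C_2)$ for convex sets (see, e.g., \cite[Corollary~6.6.2]{Rockafellar97a}). Since $\Image(\Acal^*)$ is a linear subspace it equals its own relative interior, so
\begin{equation*}
  \relint\paren[\big]{\NormalCone{\ConvexSet_P}{\xb}}
  =
  \Image(\Acal^*) - \relint\paren[\big]{F^{\triangle}}.
\end{equation*}

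Finally I would unwind the definition of strict complementarity. Because $\xb \in \relint(G)$ holds for a face~$G$ of~$\Cone$ exactly when $G$ is the smallest face of~$\Cone$ containing~$\xb$, any face witnessing strict complementarity of a pair $(\xb,\bar{s})$ must coincide with~$F$; hence such a pair with $\bar{s} \in \ConvexSet_D$ exists if and only if $\relint\paren[\big]{F^{\triangle}}$ meets $\Image(\Acal^*) - c$ (the requirement $\bar{s} \in \Cone^*$ is automatic, since $\relint(F^{\triangle}) \subseteq F^{\triangle} \subseteq \Cone^*$, so $\bar{s} \in \ConvexSet_D$ reduces to $\bar{s} \in \Image(\Acal^*) - c$). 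That intersection is nonempty precisely when $c \in \Image(\Acal^*) - \relint\paren[\big]{F^{\triangle}} = \relint\paren[\big]{\NormalCone{\ConvexSet_P}{\xb}}$, which is the assertion: for the ``only if'' direction one reads off $\bar{s} \in \relint(F^{\triangle})$ from strict complementarity and writes $c = \Acal^*(y) - \bar{s}$; for the ``if'' direction one extracts $\bar{s}$ from the displayed identity for the relative interior and checks it lies in $\ConvexSet_D$ and that $(\xb, \bar s)$ is strictly complementary via~$F$. The only step requiring any care is the relative-interior-of-a-sum identity together with the observation that the subspace $\Image(\Acal^*)$ absorbs into its own relative interior; everything else is bookkeeping with Proposition~\ref{prop:conic-lp-normal-cone} and the definitions.
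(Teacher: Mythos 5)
Your proposal is correct and follows essentially the same route as the paper: both reduce the statement to the identity \(\relint\paren[\big]{\NormalCone{\ConvexSet_P}{\xb}} = \Image(\Acal^*) - \relint\paren[\big]{\Cone^* \cap \set{\xb}^{\perp}}\) obtained from Proposition~\ref{prop:conic-lp-normal-cone}, and then unwind the definition of strict complementarity using that the witnessing face must be the minimal face of~\(\Cone\) containing~\(\xb\). The only cosmetic difference is that you make explicit the relative-interior-of-a-Minkowski-sum step (with the subspace \(\Image(\Acal^*)\) absorbed into its own relative interior), which the paper uses implicitly when passing to its displayed identity.
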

\begin{proof}
  The condition for strict complementarity of a pair \((\xb,\bar{s})\)
  requires a face~\(F\) of~\(\Cone\) to satisfy \(\xb \in
  \relint(F)\). Since our primal feasible~\(\xb\) is fixed, the
  face~\(F\) is also fixed to be the minimal face of~\(\Cone\)
  containing~\(\xb\). Thus, given \(\bar{s} \in \ConvexSet_D\), strict
  complementarity of \((\xb,\bar{s})\) is equivalent to the membership
  \(\bar{s} \in \relint(F^{\triangle}) = \relint\paren{\Cone^* \cap
    \set{\xb}^{\perp}}\). Under the assumption that \(\ConvexSet_P
  \cap \interior{\Cone} \neq \emptyset\), we have by
  Proposition~\ref{prop:conic-lp-normal-cone} that
  \begin{equation}
    \label{eq:normal-strict-compl-aux1}
    \relint\paren[\big]{
      \NormalCone{\ConvexSet_P}{\xb}
    }
    =
    \Image(\Acal^*)
    -
    \relint\paren[\big]{
      \Cone^* \cap \set{\xb}^{\perp}
    }.
  \end{equation}

  Suppose that \(\bar{s} \in \ConvexSet_D\) is such that \(\bar{s} \in
  \relint\paren{\Cone^* \cap \set{\xb}^{\perp}}\). Since \(\bar{s} \in
  \ConvexSet_D\), we have \(c \in \Image(\Acal^*) - \bar{s} \subseteq
  \Image(\Acal^*) - \relint\paren{\Cone^* \cap \set{\xb}^{\perp}} =
  \relint\paren[\big]{\NormalCone{\ConvexSet_P}{\xb}}\)
  by~\eqref{eq:normal-strict-compl-aux1}. For the converse, suppose
  that \(c \in \relint\paren[\big]{\NormalCone{\ConvexSet_P}{\xb}}\).
  Then by~\eqref{eq:normal-strict-compl-aux1} there exists \(\bar{s}
  \in \relint\paren[\big]{\Cone^* \cap \set{\xb}^{\perp}}\) such that
  \(c \in \Image(\Acal^*) - \bar{s}\). In particular, \(\bar{s} \in
  \ConvexSet_D\). Thus, \((\xb,\bar{s})\) is strictly complementary.
\end{proof}

The above proposition already implies that strict complementarity is
\emph{locally generic} in the following sense:
\begin{corollary}
  In the context of Definition~\ref{def:conic-lp-setting}, suppose
  that \(\ConvexSet_P \cap \interior(\Cone) \neq \emptyset\). Let
  \(\xb \in \ConvexSet_P\). Consider the set
  \(\NormalCone{\ConvexSet_P}{\xb}\) of all \(c \in \Euclidean^*\) for
  which~\(\xb\) is optimal for \(\sup\setst{\iprod{c}{x}}{x \in
    \ConvexSet_P}\). Set \(d \coloneqq
  \dim\paren{\NormalCone{\ConvexSet_P}{\xb}}\). Then, the set
  \begin{equation*}
    \setst[\big]{
      c \in \NormalCone{\ConvexSet_P}{\xb}
    }{
      \text{%
        there does not exist \(\bar{s} \in \ConvexSet_D\) such that
        \((\xb,\bar{s})\) is strictly complementary%
      }
    }
  \end{equation*}
  is of measure zero with respect to \(d\)\nbd-dimensional Hausdorff
  measure.
\end{corollary}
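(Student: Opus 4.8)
The plan is to identify the set in question as the relative boundary of the normal cone, and then invoke the classical fact that a convex set of dimension~$d$ has relative boundary of $d$\nbd-dimensional Hausdorff measure zero.

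Write $N \coloneqq \NormalCone{\ConvexSet_P}{\xb}$, so that $d = \dim(N)$. First I would apply Proposition~\ref{prop:4.1} once for each value of the objective vector. Fix $c \in \Euclidean^*$ and form the primal--dual pair of Definition~\ref{def:conic-lp-setting} with this~$c$; note that $\ConvexSet_P$, and hence~$N$, does not depend on~$c$ (only $\ConvexSet_D$ does). Since $\ConvexSet_P \cap \interior(\Cone) \neq \emptyset$ by hypothesis, Proposition~\ref{prop:4.1} yields: there exists $\bar{s} \in \ConvexSet_D$ with $(\xb,\bar{s})$ strictly complementary if and only if $c \in \relint(N)$. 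Restricting $c$ to $N$ as in the statement, the set to be bounded is therefore exactly the relative boundary
\begin{equation*}
  N \setminus \relint(N)
\end{equation*}
of the nonempty closed convex cone~$N$ (nonempty since $0 \in N$).

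Next I would reduce to a statement in~$\Reals^d$. Because $N$ is a cone, its affine hull equals the linear subspace $L \coloneqq \linspan(N)$, of dimension~$d$. Fix a linear isometry $\iota \ffrom \Reals^d \fto L$ and set $K \coloneqq \iota^{-1}(N)$, a closed convex subset of~$\Reals^d$ with nonempty interior. Since $\iota$ is a homeomorphism of~$\Reals^d$ onto $L = \operatorname{aff}(N)$, it carries $\relint(N)$ to $\interior(K)$ and hence $N \setminus \relint(N)$ to $\bd(K)$, the topological boundary of~$K$ in~$\Reals^d$. It is classical that the boundary of a convex subset of~$\Reals^d$ with nonempty interior is, locally, the graph of a convex (hence locally Lipschitz) function of $d-1$ real variables, so it is a countable union of Lipschitz graphs and in particular has $d$\nbd-dimensional Lebesgue measure zero; thus $\bd(K)$ is Lebesgue-null.

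Finally I would transport the conclusion back along~$\iota$. The $d$\nbd-dimensional Hausdorff measure $\mathcal{H}^d$ is a purely metric notion: it is invariant under isometries and, on~$\Reals^d$, coincides with Lebesgue measure. As $\iota$ is an isometry of~$\Reals^d$ onto the $d$\nbd-dimensional subspace~$L$, we obtain $\mathcal{H}^d\bigl(N \setminus \relint(N)\bigr) = \mathcal{H}^d\bigl(\iota(\bd K)\bigr) = 0$, which is precisely the claim of the corollary. This argument has no real obstacle: all the substance is already contained in Proposition~\ref{prop:4.1}, and the remaining points --- that a cone is full-dimensional inside its own linear span, so that ``relative boundary'' there is a genuine topological boundary, and that $\mathcal{H}^d$ restricted to~$L$ matches Lebesgue measure on~$\Reals^d$ under~$\iota$ --- are routine bookkeeping together with the textbook fact that convex sets have measure-zero boundary.
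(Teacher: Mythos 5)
Your proof is correct, and it diverges from the paper's in the one place where the paper does real work. Both arguments begin identically: Proposition~\ref{prop:4.1} (valid here since \(\ConvexSet_P \cap \interior(\Cone) \neq \emptyset\)) turns the bad set into the relative boundary \(\NormalCone{\ConvexSet_P}{\xb} \setminus \relint(\NormalCone{\ConvexSet_P}{\xb})\), so everything reduces to showing that the relative boundary of a \(d\)-dimensional closed convex set is null for \(d\)-dimensional Hausdorff measure. The paper establishes this by assuming (without elaboration) that the normal cone may be taken pointed, passing to a compact cross-section of dimension \(d-1\), and invoking a theorem of Larman on the countability of the \((d-2)\)-dimensional faces and the \((d-2)\)-dimensional measure of the union of their relative boundaries, from which the nullity of the boundary of the cross-section, and hence of the relative boundary of the cone, is deduced. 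You instead identify the affine hull of the normal cone (a linear subspace, since the cone contains the origin) isometrically with \(\Reals^d\), use the elementary classical fact that the boundary of a closed convex subset of \(\Reals^d\) with nonempty interior is locally a convex, hence Lipschitz, graph over \(d-1\) variables and therefore Lebesgue-null, and transport back using that \(\mathcal{H}^d\) is isometry-invariant and agrees with Lebesgue measure on \(\Reals^d\). Your route is more elementary and self-contained: it needs neither the pointedness reduction nor Larman's result, and it uses the cone structure only through \(0 \in \NormalCone{\ConvexSet_P}{\xb}\), which guarantees the affine hull is a linear subspace. What the paper's cross-section argument buys is finer facial information (countably many faces and measure control on their relative boundaries), but none of that extra strength is needed for the corollary as stated.
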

\begin{proof}
  Normal cone is closed and convex, and we may assume that it is
  pointed. Then, we can analyze its boundary structure by taking a
  cross-section of it via intersection by a hyperplane whose normal is
  defined by an interior point of the cone dual to the normal cone (in
  the \(d\)\nbd-dimensional affine span of the cone). Let
  \(\ConvexSet\) denote this cross-section (whose dimension is
  \(d-1\)). It is well-known that the set of all
  \((d-2)\)\nbd-dimensional faces of such a convex set \(\ConvexSet\)
  is a countable set and furthermore the union of the relative
  boundaries of these faces have zero \((d-2)\)\nbd-dimensional
  Hausdorff measure (a result of Larman~\cite{Larman71a}). Therefore,
  for the \((d-1)\)\nbd-dimensional convex set \(\ConvexSet\), its
  boundary has a zero \((d-1)\)\nbd-dimensional Hausdorff measure.
  Hence, the relative boundary of the normal cone in consideration is
  of measure zero with respect to \(d\)\nbd-dimensional Hausdorff
  measure. Therefore, the claim follows from
  Proposition~\ref{prop:4.1}.
\end{proof}

There are similar strict complementarity results in the literature
starting with Alizadeh, Haeberly and Overton~\cite{AlizadehHO97a},
Pataki and the second author~\cite{PatakiT01a}, Gortler and
Thurston~\cite{GortlerT10a}, Nie, Ranestad and
Sturmfels~\cite{NieRS10a}, and Drusvyatskiy and
Lewis~\cite{DrusvyatskiyL11a}. All of these results are
\emph{generic}.  Many of these papers also address various
related notions of nondegeneracy and establish that it too is generic.
However, it is well-known in LP literature that degeneracy arises often
in applications and in many cases ``naturally.''  Therefore, it is
of interest to characterize when a certain geometric/algebraic condition
can guarantee nondegeneracy or strict complementarity.

We shall next present a characterization of strict complementarity which may be
helpful in proving that some specific SDPs satisfy strict
complementarity.  First we recall an elementary result in convex analysis
(for the sake of completeness
a proof is included in the appendix):
\begin{proposition}
  \label{prop:compact-gauge-closed}
  Let \(\ConvexSet \subseteq \Euclidean\) be a compact convex set.
  Then the gauge function \(\gauge{\ConvexSet}{\cdot}\)
  of~\(\ConvexSet\) is closed.
\end{proposition}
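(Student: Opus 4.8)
The plan is to use the fact that a function valued in $(-\infty,+\infty]$ is closed if and only if it is lower semicontinuous, and then to verify lower semicontinuity of $\gauge{\ConvexSet}{\cdot}$ directly by a compactness argument. If $\ConvexSet=\emptyset$ the gauge is identically $+\infty$ (hence closed), so I may assume $\ConvexSet\neq\emptyset$; then $\gauge{\ConvexSet}{0}=0$ since $0\in 0\cdot\ConvexSet$. Fix $\xb\in\Euclidean$ and an arbitrary sequence $x_k\to\xb$; the goal is $\gauge{\ConvexSet}{\xb}\le\liminf_k\gauge{\ConvexSet}{x_k}$. Passing to a subsequence I may assume $\gauge{\ConvexSet}{x_k}\to L$ for some $L\in[0,+\infty]$, and if $L=+\infty$ there is nothing to prove; so suppose $L<+\infty$, discard the finitely many indices with $\gauge{\ConvexSet}{x_k}=+\infty$, and write $\lambda_k:=\gauge{\ConvexSet}{x_k}<+\infty$.

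The crucial observation is that whenever $x\neq 0$ and $\gauge{\ConvexSet}{x}<+\infty$, the infimum defining $\gauge{\ConvexSet}{x}$ is attained and strictly positive, i.e.\ $x\in\lambda_0\ConvexSet$ with $\lambda_0:=\gauge{\ConvexSet}{x}>0$. Indeed, $x\notin 0\cdot\ConvexSet=\set{0}$ forces every admissible scalar to be positive, so along a minimizing sequence $\mu_j\downarrow\lambda_0$ one has $x/\mu_j\in\ConvexSet$; if $\lambda_0>0$ then $x/\mu_j\to x/\lambda_0$ and closedness of $\ConvexSet$ gives $x/\lambda_0\in\ConvexSet$, whereas $\lambda_0=0$ would make $\set{x/\mu_j}$ an unbounded sequence in $\ConvexSet$, contradicting boundedness. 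Applying this with $x=x_k$: if $x_k=0$ for infinitely many $k$ then $\xb=0$ and $\gauge{\ConvexSet}{\xb}=0\le L$ and we are done; otherwise I may assume $x_k\neq 0$ for all $k$, so $0<\lambda_k<+\infty$ and $x_k/\lambda_k\in\ConvexSet$ for every $k$.

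It remains to pass to the limit, using $M:=\sup_{y\in\ConvexSet}\norm{y}<+\infty$. If $L>0$, then $x_k/\lambda_k\to\xb/L$, so $\xb/L\in\ConvexSet$ by closedness and hence $\gauge{\ConvexSet}{\xb}\le L$. If $L=0$, then $\norm{x_k}=\lambda_k\norm{x_k/\lambda_k}\le\lambda_k M\to 0$, forcing $\xb=0$ and $\gauge{\ConvexSet}{\xb}=0=L$. In every case $\gauge{\ConvexSet}{\xb}\le L$, which proves lower semicontinuity and hence closedness of $\gauge{\ConvexSet}{\cdot}$.

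The only real obstacle is that the statement does not assume $0\in\ConvexSet$, so $\gauge{\ConvexSet}{\cdot}$ need not be convex and may equal $+\infty$ on a large set; one cannot simply invoke the classical identification of the gauge as the Minkowski functional of a scaled copy of $\ConvexSet$, and the attainment/boundedness bookkeeping above is exactly what handles this. A slicker-looking alternative would be to show that the epigraph of $\gauge{\ConvexSet}{\cdot}$ equals $\cone\paren[\big]{\paren[\big]{\conv(\ConvexSet\cup\set{0})}\times\set{1}}$, a cone generated by the compact set $\conv(\ConvexSet\cup\set{0})$ (compact as the continuous image of $[0,1]\times\ConvexSet$ under $(s,y)\mapsto sy$), and then use that the cone generated by a compact set avoiding the origin is closed; but establishing that epigraph identity relies on the same attainment observation, so it is no shorter.
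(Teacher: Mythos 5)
Your proof is correct and takes essentially the same route as the paper: a direct verification of lower semicontinuity in which boundedness of \(\ConvexSet\) keeps the admissible scalars away from zero at nonzero points and closedness of \(\ConvexSet\) allows passage to the limit (the paper phrases this via closed sublevel sets, the bound \(\gauge{\ConvexSet}{x} \geq \norm{x}/M\), and \(\eps_n\)-approximate attainment, whereas you first prove exact attainment of the infimum and argue via \(\liminf\) --- a cosmetic difference). One stray remark in your closing paragraph is wrong, though harmless to the argument: the gauge of a convex set is convex even when \(0 \notin \ConvexSet\), since positive homogeneity together with the inclusion \(\lambda\ConvexSet + \mu\ConvexSet \subseteq (\lambda+\mu)\ConvexSet\) yields subadditivity.
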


Using Proposition~\ref{prop:compact-gauge-closed}, we slightly extend
a characterization of the exposed faces of the polar
from~\cite{BolteDL11a} (again, for the sake of completeness, a proof
is included in the appendix):
\begin{theorem}
  \label{thm:normal-cone-exposed-faces}
  Let \(\ConvexSet \subseteq \Euclidean\) be a compact convex set.
  Then the nonempty exposed faces of~\(\polar{\ConvexSet}\) other
  than~\(\polar{\ConvexSet}\) itself are precisely the nonempty sets
  of the form
  \begin{equation}
    \label{eq:normal-cone-exposed-faces}
    \Face_{\xb}
    \coloneqq
    \setst{
      y \in \NormalCone{\ConvexSet}{\xb}
    }{
      \iprod{y}{\xb} = 1
    }
  \end{equation}
  as \(\xb\) ranges over~\(\ConvexSet\). Moreover, for any such face,
  \begin{equation}
    \label{eq:polar-face-relint}
    \relint\paren*{\Face_{\xb}}
    =
    \setst*{
      y \in \relint\paren*{\NormalCone{\ConvexSet}{\xb}}
    }{
      \iprod{y}{\xb} = 1
    }.
  \end{equation}
\end{theorem}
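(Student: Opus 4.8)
The plan is to establish three things in turn: (i) every nonempty set $\Face_{\xb}$ is a proper exposed face of $\polar{\ConvexSet}$; (ii) conversely, every proper nonempty exposed face of $\polar{\ConvexSet}$ has this form; and (iii) the relative-interior identity~\eqref{eq:polar-face-relint}. Everything rests on the elementary identity
\[
  \Face_{\xb}
  =
  \polar{\ConvexSet} \cap \setst{y}{\iprod{y}{\xb} = 1}
  \qquad\text{for } \xb \in \ConvexSet ,
\]
whose two inclusions I would check directly from the definitions: if $y \in \NormalCone{\ConvexSet}{\xb}$ with $\iprod{y}{\xb} = 1$, then $\iprod{y}{x} \le \iprod{y}{\xb} = 1$ for every $x \in \ConvexSet$, so $y \in \polar{\ConvexSet}$; and if $y \in \polar{\ConvexSet}$ with $\iprod{y}{\xb} = 1$, then $\iprod{y}{x} \le 1 = \iprod{y}{\xb}$ for every $x \in \ConvexSet$, so $y \in \NormalCone{\ConvexSet}{\xb}$. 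For (i): since $\xb \in \ConvexSet$, every $y \in \polar{\ConvexSet}$ satisfies $\iprod{y}{\xb} \le 1$, so whenever $\Face_{\xb} \neq \emptyset$ the hyperplane $\setst{y}{\iprod{y}{\xb} = 1}$ supports $\polar{\ConvexSet}$; hence $\Face_{\xb}$ is an exposed face, and it is proper because $0 \in \polar{\ConvexSet} \setminus \Face_{\xb}$.

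For (ii), let $F = \polar{\ConvexSet} \cap H$ be a proper nonempty exposed face, with $H$ a supporting hyperplane of $\polar{\ConvexSet}$. Since $\ConvexSet$ is bounded, $0 \in \interior(\polar{\ConvexSet})$, so $H$ avoids the origin and, using $F \neq \emptyset$, can be normalized to $H = \setst{y}{\iprod{\xb}{y} = 1}$ with $\polar{\ConvexSet} \subseteq \setst{y}{\iprod{\xb}{y} \le 1}$ and $\suppf{\polar{\ConvexSet}}{\xb} = 1$. The heart of the proof is to show $\xb \in \ConvexSet$. From $\suppf{\polar{\ConvexSet}}{\xb} \le 1$ and the bipolar theorem, $\xb \in \polar{(\polar{\ConvexSet})}$, which is the closed convex hull of $\ConvexSet \cup \set{0}$ and hence, by compactness of $\ConvexSet$, equals $\conv(\ConvexSet \cup \set{0})$; so $\xb = \mu x_{0}$ for some $x_{0} \in \ConvexSet$ and $\mu \in [0,1]$. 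Since $x_{0} \in \ConvexSet$ gives $\suppf{\polar{\ConvexSet}}{x_{0}} \le 1$, we get $1 = \suppf{\polar{\ConvexSet}}{\xb} = \mu\,\suppf{\polar{\ConvexSet}}{x_{0}} \le \mu \le 1$, forcing $\mu = 1$, so $\xb = x_{0} \in \ConvexSet$. One may alternatively run this through the identity $\suppf{\polar{\ConvexSet}}{\cdot} = \gauge{\ConvexSet}{\cdot}$, under which $\suppf{\polar{\ConvexSet}}{\xb} = 1$ says $\gauge{\ConvexSet}{\xb} = 1$, and Proposition~\ref{prop:compact-gauge-closed} ensures the defining infimum is attained, again yielding $\xb \in \ConvexSet$. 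Once $\xb \in \ConvexSet$, the displayed identity gives $F = \polar{\ConvexSet} \cap H = \Face_{\xb}$.

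For (iii), fix $\xb \in \ConvexSet$ with $\Face_{\xb} \neq \emptyset$, and put $N \coloneqq \NormalCone{\ConvexSet}{\xb}$ and $H \coloneqq \setst{y}{\iprod{y}{\xb} = 1}$, so $\Face_{\xb} = N \cap H$. I would apply the standard fact that $\relint(N \cap H) = \relint(N) \cap H$ provided $\relint(N) \cap H \neq \emptyset$. To verify the proviso, choose $y_{2} \in \relint(N)$ and, using $\Face_{\xb} \neq \emptyset$, some $y_{0} \in N$ with $\iprod{y_{0}}{\xb} = 1$; because $N$ is a convex cone, $y_{2} + t y_{0}$ is a positive multiple of a point of the open segment between $y_{0} \in N$ and $y_{2} \in \relint(N)$, hence lies in $\relint(N)$, and $\iprod{y_{2} + t y_{0}}{\xb} = \iprod{y_{2}}{\xb} + t$ is positive for $t$ large; rescaling such a point by the reciprocal of this positive value keeps it in $\relint(N)$ and puts it in $H$. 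This establishes~\eqref{eq:polar-face-relint}.

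I expect the one genuine obstacle to be the step $\xb \in \ConvexSet$ in (ii): this is exactly where compactness of $\ConvexSet$ is essential — boundedness to locate $0$ in $\interior(\polar{\ConvexSet})$ (so that $H$ can be normalized) and compactness to pass back from $\conv(\ConvexSet \cup \set{0})$ to $\ConvexSet$ — and it is the substance of the extension over~\cite{BolteDL11a}. The rest is the displayed set identity and routine relative-interior manipulations.
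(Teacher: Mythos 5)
Your proof is correct, and while it shares the paper's backbone --- the identity \(\Face_{\xb} = \setst{y \in \polar{\ConvexSet}}{\iprod{y}{\xb} = 1}\) for \(\xb \in \ConvexSet\), and the reduction of~\eqref{eq:polar-face-relint} (via the standard relative-interior-of-intersection fact) to showing that \(\relint(\NormalCone{\ConvexSet}{\xb})\) meets the hyperplane \(\setst{y}{\iprod{y}{\xb} = 1}\) --- you carry out the two substantive steps by a genuinely different route. For the converse direction of the face characterization, the paper rescales the exposing vector \(\zb\) by its gauge value and invokes \(\gauge{\ConvexSet}{\cdot} = \suppf{\polar{\ConvexSet}}{\cdot}\), which is precisely what Proposition~\ref{prop:compact-gauge-closed} was set up to deliver; you instead normalize the supporting hyperplane (using that boundedness of \(\ConvexSet\) puts \(0\) in the interior of \(\polar{\ConvexSet}\), so the hyperplane misses the origin) and then use the bipolar theorem together with compactness of \(\conv(\ConvexSet \cup \set{0})\) to write the normal vector as \(\mu x_0\) with \(x_0 \in \ConvexSet\) and force \(\mu = 1\). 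Your version has the merit of making the membership \(\xb \in \ConvexSet\) fully explicit --- the paper leaves this implicit when it applies its identity for \(\Face_{\xb}\) to the rescaled vector with \(\gauge{\ConvexSet}{\xb} = 1\) --- while the paper's version is shorter given that the gauge machinery is already in place and reuses it. For the relative-interior proviso, the paper argues by contradiction with a separating hyperplane, whereas you exhibit a point of \(\relint(\NormalCone{\ConvexSet}{\xb})\) on the hyperplane directly, by adding a large multiple of a point of \(\Face_{\xb}\) to a relative-interior point and rescaling (valid since the relative interior of a convex cone is closed under positive scaling); this is a clean constructive alternative. One small caveat: in your parenthetical alternative route, closedness of the gauge does not by itself assert that the infimum defining \(\gauge{\ConvexSet}{\xb}\) is attained; attainment follows directly from compactness (closedness) of \(\ConvexSet\), so nothing is lost, but the attribution to Proposition~\ref{prop:compact-gauge-closed} is slightly off.
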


Now we can characterize \emph{exactly} the existence of strictly
complementary solutions for linear programs in conic form for a rich
class of objective functions.

\begin{theorem}
  \label{thm:strict-compl-exposed-faces}
  Let \(\Cone \subseteq \Euclidean\) be a pointed closed convex cone
  with nonempty interior. Let \(\Acal \ffrom \Euclidean \fto
  \EuclideanB^*\) be a linear function, and let \(b \in
  \EuclideanB^*\). Set \(\ConvexSet \coloneqq \setst{x \in
    \Cone}{\Acal(x) = b}\). Suppose that \(\ConvexSet \cap
  \interior(\Cone) \neq \emptyset\) and that \(\ConvexSet\) is
  compact. Then the following are equivalent:
  \begin{enumerate}[(i)]
  \item for every \(c \in \EuclideanA^* \drop
    \polar{\sqbrac{\cone(\ConvexSet)}}\), the optimization problem
    \(\max\setst{\iprod{c}{x}}{\Acal(x) = b,\, x \in \Cone}\) and its
    dual have a strictly complementary pair of optimal solutions;
  \item \(\polar{\ConvexSet}\) is facially exposed.
  \end{enumerate}
\end{theorem}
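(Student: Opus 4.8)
The idea is to chain Proposition~\ref{prop:4.1}, which says that strict complementarity of a pair $(\xb,\bar s)$ with the primal-feasible point $\xb$ fixed is equivalent to $c\in\relint(\NormalCone{\ConvexSet}{\xb})$, together with Theorem~\ref{thm:normal-cone-exposed-faces}, which identifies the nonempty proper exposed faces of $\polar{\ConvexSet}$ with the sets $\Face_{\xb}$ and pins down their relative interiors via~\eqref{eq:polar-face-relint}. The first step is to reformulate~(i). For fixed $c$, the problem $\max\setst{\iprod{c}{x}}{\Acal(x)=b,\,x\in\Cone}$ has the Slater point assumed in the hypothesis and finite optimal value $\suppf{\ConvexSet}{c}$ (since $\ConvexSet$ is compact), so by the Strong Duality Theorem the dual attains its value. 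If $(\xb,\bar s)$ is strictly complementary with $\xb\in\ConvexSet$ and $\bar s\in\ConvexSet_D$, then $\bar s$ lies in the conjugate face of the minimal face of $\Cone$ containing $\xb$, so $\iprod{\bar s}{\xb}=0$, and weak duality then forces both $\xb$ and $\bar s$ to be optimal; conversely, any $\xb$ with $c\in\NormalCone{\ConvexSet}{\xb}$ is automatically primal-optimal. Hence, by Proposition~\ref{prop:4.1}, (i) is equivalent to the condition (call it (i$'$)): for every $c\in\EuclideanA^*\drop\polar{\sqbrac{\cone(\ConvexSet)}}$ there exists $\xb\in\ConvexSet$ with $c\in\relint(\NormalCone{\ConvexSet}{\xb})$.

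Next I would reduce (i$'$) to boundary points of $\polar{\ConvexSet}$. Compactness of $\ConvexSet$ makes $\polar{\ConvexSet}$ full-dimensional (it contains a ball about the origin), and since $\suppf{\ConvexSet}{\cdot}$ is finite, continuous and positively homogeneous with $\polar{\ConvexSet}=\setst{c}{\suppf{\ConvexSet}{c}\le1}$, one gets $\interior(\polar{\ConvexSet})=\setst{c}{\suppf{\ConvexSet}{c}<1}$ and $\bd(\polar{\ConvexSet})=\setst{c}{\suppf{\ConvexSet}{c}=1}$, whereas $\EuclideanA^*\drop\polar{\sqbrac{\cone(\ConvexSet)}}=\setst{c}{\suppf{\ConvexSet}{c}>0}$. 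Each $c$ in the latter set is a positive scalar multiple of a unique point of $\bd(\polar{\ConvexSet})$, and $\NormalCone{\ConvexSet}{\xb}$ is a cone, so (i$'$) is equivalent to its restriction to $c\in\bd(\polar{\ConvexSet})$.

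Now I would match this with exposed faces. Fix $c\in\bd(\polar{\ConvexSet})$ and let $G$ be the unique face of the closed convex set $\polar{\ConvexSet}$ with $c\in\relint(G)$; it is nonempty and, since $c\notin\relint(\polar{\ConvexSet})$, proper. If $c\in\relint(\NormalCone{\ConvexSet}{\xb})$ for some $\xb\in\ConvexSet$, then $\iprod{c}{\xb}=\suppf{\ConvexSet}{c}=1$, so $c\in\relint(\Face_{\xb})$ by~\eqref{eq:polar-face-relint}, and uniqueness forces $G=\Face_{\xb}$, which is exposed by Theorem~\ref{thm:normal-cone-exposed-faces}. Conversely, if $G$ is exposed then Theorem~\ref{thm:normal-cone-exposed-faces} yields $\xb\in\ConvexSet$ with $G=\Face_{\xb}$, whence $c\in\relint(G)=\relint(\Face_{\xb})$ gives $c\in\relint(\NormalCone{\ConvexSet}{\xb})$. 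Thus (i$'$) is equivalent to: for every $c\in\bd(\polar{\ConvexSet})$, the minimal face of $\polar{\ConvexSet}$ containing $c$ is exposed.

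Finally, that last condition is exactly facial exposedness of $\polar{\ConvexSet}$, which is~(ii): if every face is exposed, then in particular the minimal faces of boundary points are; conversely, any nonempty proper face $F$ of $\polar{\ConvexSet}$ is contained in $\bd(\polar{\ConvexSet})$, so choosing $c\in\relint(F)$ exhibits $F$ as the minimal face of $\polar{\ConvexSet}$ containing $c$, hence exposed, while the trivial faces $\emptyset$ and $\polar{\ConvexSet}$ are exposed by convention. I expect the only delicate point to be the reformulation in the first paragraph: one must argue cleanly that the strictly complementary $\bar s$ produced by Proposition~\ref{prop:4.1} is dual-\emph{optimal} and that $\xb$ may be taken primal-optimal (this is where Slater-based strong duality and complementary slackness enter), and one should spell out the identity $\bd(\polar{\ConvexSet})=\setst{c}{\suppf{\ConvexSet}{c}=1}$, which relies on compactness of $\ConvexSet$ and continuity of the support function; everything else is bookkeeping with the two displayed formulas in Theorem~\ref{thm:normal-cone-exposed-faces}.
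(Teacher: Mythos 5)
Your proposal is correct and takes essentially the same route as the paper: both rest on Proposition~\ref{prop:4.1} combined with Theorem~\ref{thm:normal-cone-exposed-faces} and~\eqref{eq:polar-face-relint}, using the normalization of \(c\) (your \(c/\suppf{\ConvexSet}{c}\) is the paper's \(c/\iprod{c}{\xb}\) for a maximizer \(\xb\)) and the fact that distinct faces of \(\polar{\ConvexSet}\) have disjoint relative interiors, and your handling of the optimality of the strictly complementary pair matches (indeed makes explicit) what the paper leaves implicit when invoking Proposition~\ref{prop:4.1}. The only difference is organizational: the paper proves the two implications directly, while you pass through the intermediate reformulation and the restriction to \(\bd(\polar{\ConvexSet})\); the mathematical content is the same.
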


\begin{proof}
  We start with the forward implication. Let \(\Face\) be a face
  of~\(\polar{\ConvexSet}\) such that \(\emptyset \neq \Face \neq
  \polar{\ConvexSet}\). Let \(c \in \relint(\Face)\). Since
  \(\ConvexSet\) is compact, \(\polar{\ConvexSet}\) has nonempty
  interior. Now \(\Face \neq \polar{\ConvexSet}\) implies that \(c \in
  \Face \subseteq \bd(\polar{\ConvexSet})\). Hence, \(c\) lies in some
  exposed face of~\(\polar{\ConvexSet}\). Thus, by
  Theorem~\ref{thm:normal-cone-exposed-faces}, there is some \(\xb \in
  \ConvexSet\) such that \(c \in \Face_{\xb}\), using the notation
  from~\eqref{eq:normal-cone-exposed-faces}. Thus, \(\iprod{c}{\xb} =
  1\), which shows that \(c \not\in
  \polar{\sqbrac{\cone(\ConvexSet)}}\). By hypothesis,
  \(\max\setst{\iprod{c}{x}}{\Acal(x) = b,\, x \in \Cone}\) and its
  dual have a strictly complementary pair of optimal solutions, so
  that \(c \in \relint\paren{\NormalCone{\ConvexSet}{\xh}}\) for some
  \(\xh \in \ConvexSet\) by Proposition~\ref{prop:4.1}. Note that \(1
  = \iprod{c}{\xb} \leq \iprod{c}{\xh}\) and \(c \in
  \polar{\ConvexSet}\) so \(\iprod{c}{\xh} = 1\). Thus, we find by
  Theorem~\ref{thm:normal-cone-exposed-faces} that \(c \in
  \relint(\Face_{\xh})\). But this means that \(\Face = \Face_{\xh}\),
  so that \(\Face\) is exposed.

  Suppose next that \(\polar{\ConvexSet}\) is facially exposed, and
  let \(c \in \EuclideanA^* \drop
  \polar{\sqbrac{\cone(\ConvexSet)}}\). Let \(\xb \in \argmax_{x \in
    \ConvexSet} \iprod{c}{x}\). Note that \(\iprod{c}{\xb} \leq 0\)
  would imply that \(c \in \polar{\sqbrac{\cone(\ConvexSet)}}\), so
  \(\iprod{c}{\xb} > 0\). Set \(\cb \coloneqq c/\iprod{c}{\xb}\) so
  that \(\iprod{\cb}{\xb} = 1\). Together with \(\cb \in
  \NormalCone{\ConvexSet}{\xb}\), this implies that \(\cb \in
  \Face_{\xb}\), using the notation
  from~\eqref{eq:normal-cone-exposed-faces}. By
  Theorem~\ref{thm:normal-cone-exposed-faces}, it follows that \(\cb\)
  lies in~\(\bd(\polar{\ConvexSet})\). Since \(\polar{\ConvexSet}\) is
  facially exposed, there exists an exposed face~\(\Face\)
  of~\(\polar{\ConvexSet}\) such that \(\cb \in \relint(\Face)\). By
  Theorem~\ref{thm:normal-cone-exposed-faces}, there exists \(\xh \in
  \ConvexSet\) such that \(\Face = \Face_{\xh}\). Thus,
  \eqref{eq:polar-face-relint} shows that \(\cb\) lies in
  \(\relint\paren{\NormalCone{\ConvexSet}{\xh}}\), and so does~\(c\).
  It follows from Proposition~\ref{prop:4.1} that
  \(\max\setst{\iprod{c}{x}}{\Acal(x) = b,\, x \in \Cone}\) and its
  dual have a strictly complementary pair of optimal solutions.
\end{proof}

One way to regard Theorem~\ref{thm:strict-compl-exposed-faces} is the
following. Determining directly whether
\(\max\setst{\iprod{c}{x}}{\Acal(x) = b,\, x \in \Cone}\) and its dual
have a pair of strictly complementary solutions individually for each
\(c \in \Euclidean^* \drop \polar{\sqbrac{\cone(\ConvexSet)}}\)
involves studying a small portion of the boundary of infinitely many
convex sets of the form~\(\setst{s \in \Cone^*}{s \in \Image(\Acal^*)
  - c}\), one for each objective vector~\(c\).
Theorem~\ref{thm:strict-compl-exposed-faces} offers, as an
alternative, determining the complete boundary structure of a single
convex set, namely, \(\polar{\ConvexSet}\).

In the same spirit as Proposition~\ref{prop:conic-lp-normal-cone}, the
polar of the feasible region of a linear conic optimization problem
may be described as follows (see, e.g.,
\cite[Remark~2.2]{LaurentP95a}):
\begin{proposition}
  \label{prop:conic-convex-set-polar}
  Let \(\Cone \subseteq \Euclidean\) be a pointed closed convex cone
  with nonempty interior. Let \(\Acal \ffrom \Euclidean \fto
  \EuclideanB^*\) be a linear function, and let \(b \in
  \EuclideanB^*\). Set \(\ConvexSet \coloneqq \setst{x \in
    \Cone}{\Acal(x) = b}\). Suppose that \(\ConvexSet \cap
  \interior(\Cone) \neq \emptyset\). Let \(\xb \in \EuclideanA\) such
  that \(\Acal(\xb) = b\). Then
  \begin{equation*}
    \polar{\ConvexSet}
    =
    \paren*{
      \Image(\Acal^*)
      \cap
      \polar{\set{\xb}}
    }
    -
    \Cone^*.
  \end{equation*}
\end{proposition}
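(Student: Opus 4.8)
The plan is to prove the two inclusions of the identity
\(\polar{\ConvexSet} = \bigl(\Image(\Acal^*) \cap \polar{\set{\xb}}\bigr) - \Cone^*\)
directly from the definition of the polar, using the fact that \(\ConvexSet = \setst{x \in \Cone}{\Acal(x) = b}\) together with the Slater-type hypothesis \(\ConvexSet \cap \interior(\Cone) \neq \emptyset\). Note first that, since \(\Acal(\xb) = b\), every \(x \in \ConvexSet\) can be written as \(x = \xb + h\) with \(h \in \Null(\Acal)\) and \(\xb + h \in \Cone\); conversely such points are exactly the elements of \(\ConvexSet\). I would keep this reparametrization in mind throughout.

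For the inclusion `\(\supseteq\)', let \(y = \Acal^*(u) - s\) where \(\Acal^*(u) \in \polar{\set{\xb}}\) (so \(\iprod{\Acal^*(u)}{\xb} \leq 1\)) and \(s \in \Cone^*\). For any \(x \in \ConvexSet\) we compute
\(\iprod{y}{x} = \iprod{\Acal^*(u)}{x} - \iprod{s}{x} = \iprod{u}{\Acal(x)} - \iprod{s}{x} = \iprod{u}{b} - \iprod{s}{x} = \iprod{\Acal^*(u)}{\xb} - \iprod{s}{x} \leq 1 - 0 = 1\),
using \(\Acal(x) = b = \Acal(\xb)\), \(s \in \Cone^*\), and \(x \in \Cone\). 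Hence \(y \in \polar{\ConvexSet}\). This direction is routine and does not use the Slater assumption.

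For the reverse inclusion `\(\subseteq\)', let \(y \in \polar{\ConvexSet}\). The substance of the argument is a Strong Duality / conic-programming statement: consider the problem \(\sup\setst{\iprod{y}{x}}{\Acal(x) = b,\ x \in \Cone}\). Its optimal value is at most \(1\) (since \(y \in \polar{\ConvexSet}\)); moreover, because \(\ConvexSet\) is nonempty and \(\ConvexSet \cap \interior(\Cone) \neq \emptyset\), the primal is feasible with a restricted Slater point, so Strong Duality (the same theorem invoked in the proof of Proposition~\ref{prop:conic-lp-normal-cone}) applies to the dual \(\inf\setst{\iprod{b}{u}}{\Acal^*(u) \succeq_{\Cone^*} y}\). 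The subtlety is that this primal supremum need not be exactly \(1\); it may be strictly smaller, or even negative. I would handle this by first observing that if the supremum is \(\leq 0\) then in particular \(0 \in \polar{\set{\xb}}\) trivially and one can argue directly, but the cleanest route is: the dual has an optimal solution \(\bar u\) with \(\iprod{b}{\bar u} \leq 1\) and \(s := \Acal^*(\bar u) - y \in \Cone^*\). Then \(y = \Acal^*(\bar u) - s\), and \(\iprod{\Acal^*(\bar u)}{\xb} = \iprod{\bar u}{\Acal(\xb)} = \iprod{\bar u}{b} = \iprod{b}{\bar u} \leq 1\), so \(\Acal^*(\bar u) \in \polar{\set{\xb}}\) and \(y \in \bigl(\Image(\Acal^*) \cap \polar{\set{\xb}}\bigr) - \Cone^*\), as required.

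The main obstacle is making sure Strong Duality is applied correctly: one must confirm that the primal \(\sup\setst{\iprod{y}{x}}{x \in \ConvexSet}\) has a finite optimal value (it is bounded above by \(1\), but one should also note \(\ConvexSet\) nonempty so it is \(> -\infty\)) and that the restricted Slater condition transfers verbatim from the hypothesis \(\ConvexSet \cap \interior(\Cone) \neq \emptyset\); once that is in place, the cited Strong Duality Theorem delivers a dual optimal \(\bar u\) with \(\iprod{b}{\bar u}\) equal to the primal value, hence \(\leq 1\), and the rest is the bookkeeping above. I expect this to go through with essentially the same machinery as Proposition~\ref{prop:conic-lp-normal-cone}, so the write-up can be kept short by referencing that proof's setup.
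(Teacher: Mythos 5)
Your proposal is correct and follows essentially the same route as the paper: both directions come down to the Strong Duality Theorem applied to \(\sup\setst{\iprod{y}{x}}{\Acal(x)=b,\ x\in\Cone}\) under the restricted Slater condition, with the identity \(\iprod{b}{\bar u}=\iprod{\Acal(\xb)}{\bar u}=\iprod{\xb}{\Acal^*(\bar u)}\) converting the dual objective bound into membership of \(\Acal^*(\bar u)\) in \(\polar{\set{\xb}}\). The paper's proof is just a more compressed statement of this equivalence, and your extra care about finiteness of the primal value is harmless but not needed beyond noting the value is bounded above by \(1\) and the problem is feasible.
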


\begin{proof}
  By the Strong Duality Theorem, membership of \(c \in
  \polar{\ConvexSet}\) is equivalent to the existence of \(y \in
  \EuclideanB\) and \(s \in \Cone^*\) such that \(\Acal^*(y) - s = c\)
  and \(\iprod{b}{y} \leq 1\). Note that \(\iprod{b}{y} =
  \iprod{\Acal(\xb)}{y} = \iprod{\xb}{\Acal^*(y)}\), so that \(c \in
  \polar{\ConvexSet}\) if and only if \(c \in \paren*{\Image(\Acal^*)
    \cap \polar{\set{\xb}}} - \Cone^*\).
\end{proof}

Let us apply this in the context of \MaxCut. Let \(G = (V,E)\) be a
graph, and let \(w \in \Reals_+^E\). Let \(\Laplacian{G}(w) \coloneqq
\sum_{ij \in E} w_{ij} \oprodsym{\paren{e_i-e_j}}\) denote the
weighted Laplacian of~\(G\) with respect to~\(w\). The SDP relaxation
for \MaxCut{} used by Goemans and Williamson~\cite{GoemansW95a} is
\begin{equation*}
  \max\setst{
    \iprod{\tfrac{1}{4}\Laplacian{G}(w)}{X}
  }{
    X \in \Elliptope{V}
  }.
\end{equation*}
Note that \(\Laplacian{G}(w) \succeq 0\). Moreover, \(\Psd{V} \cap
\polar{\sqbrac{\cone\paren{\Elliptope{V}}}} = \set{0}\), since \(I \in
\Elliptope{V}\). Thus, Theorem~\ref{thm:strict-compl-exposed-faces}
and Proposition~\ref{prop:conic-convex-set-polar} yield a concrete
approach to prove strict complementarity for all the ``relevant''
objective functions for the \MaxCut{} SDP. Namely, it suffices to
prove that \(\polar{\Elliptope{V}} = \paren{\Image(\Diag) \cap
  \polar{\set{I}}} - \Psd{V}\) is facially exposed.

\appendix
\section{Proof of Proposition~\ref{prop:compact-gauge-closed}}

\begin{proof}
  We may assume that \(\ConvexSet \neq \emptyset\), so that
  \(\gauge{\ConvexSet}{\cdot}\) is proper. Thus, we have to show that
  \(\gauge{\ConvexSet}{\cdot}\) is lower semi-continuous, i.e., for
  each \(\alpha \in \Reals\), the sub-level set \(S_{\alpha} \coloneqq
  \setst{x \in \Euclidean}{\gauge{\ConvexSet}{x} \leq \alpha}\) is
  closed. This is clearly the case for \(\alpha < 0\). For \(\alpha
  \geq 0\), we shall the fact that
  \begin{equation}
    \label{eq:1}
    \text{%
      there exists \(M > 0\) such that \(\gauge{\ConvexSet}{x} \geq
      \norm{x}/M\) for all \(x \in \Euclidean\).
    }
  \end{equation}
  Indeed, set \(M \coloneqq \max\setst{\norm{x}}{x \in \ConvexSet} +
  1\). Then, if \(\lambda \geq 0\) is such that \(x \in \lambda
  \ConvexSet\), we have \(\norm{x} \leq \lambda M\). This
  proves~\eqref{eq:1}.

  Note that~\eqref{eq:1} implies that \(S_0 = \set{0}\). Next, let
  \(\alpha > 0\). Let \((x_n)_{n \in \Naturals}\) be a sequence in
  \(S_{\alpha}\) converging to~\(x\). If \(x \neq 0\), then \(x \in
  S_0 \subseteq S_{\alpha}\), so assume that \(x \neq 0\). By taking
  some tail of the sequence, we may assume that \(\norm{x_n} \geq
  \tfrac{1}{2} \norm{x}\) for each~\(n\). Set \(\gamma_n \coloneqq
  \gauge{\ConvexSet}{x_n}\), so that \(\tfrac{\norm{x}}{2M} \leq
  \tfrac{\norm{x_n}}{M} \leq \gamma_n \leq \alpha\) for each~\(n\).
  Refine the sequence so that \(\gamma_n \to \gamma\) for some
  \(\gamma \in \Reals\) with \(\tfrac{\norm{x}}{2M} \leq \gamma \leq
  \alpha\). For each \(n\), there exists \(0 \leq \eps_n \leq 1/n\)
  such that \(x_n \in (\gamma_n + \eps_n) \ConvexSet\), whence
  \begin{equation*}
    \frac{1}{\gamma_n + \eps_n} x_n \in \ConvexSet
    \quad
    \forall n \in \Naturals.
  \end{equation*}
  By taking \(n \to \infty\), we find that \(\tfrac{1}{\gamma} x \in
  \ConvexSet\), so that \(\gauge{\ConvexSet}{x} \leq \gamma \leq
  \alpha\) and \(x \in S_{\alpha}\).
\end{proof}

\section{Proof of Theorem~\ref{thm:normal-cone-exposed-faces}}

\begin{proof}
  It is easy to check that
  \begin{equation}
    \label{eq:2}
    \Face_{\xb}
    =
    \setst{
      y \in \polar{\ConvexSet}
    }{
      \iprod{y}{\xb} = 1
    }
    \qquad
    \forall \xb \in \ConvexSet.
  \end{equation}
  Note that every set of the form \(\setst{y \in
    \polar{\ConvexSet}}{\iprod{y}{\xb} = 1}\) for some \(\xb \in
  \ConvexSet\) is an exposed face of~\(\ConvexSet\). Furthermore \(0
  \in \polar{\ConvexSet}\) shows that \(\Face_{\xb}\) is a proper
  subset of~\(\polar{\ConvexSet}\).

  Next let \(\Face\) be a nonempty exposed face
  of~\(\polar{\ConvexSet}\) other than \(\polar{\ConvexSet}\) itself,
  so that
  \begin{equation*}
    \Face
    =
    \setst*{
      y \in \polar{\ConvexSet}
    }{
      \iprod{y}{\zb}
      =
      \suppf{\polar{\ConvexSet}}{\zb}
    }
  \end{equation*}
  for some \(\zb \in \Euclidean\) such that
  \(\suppf{\polar{\ConvexSet}}{\zb} < \infty\). Then \(\Face \neq
  \polar{\ConvexSet}\) shows that \(\zb \neq 0\). By
  Proposition~\ref{prop:compact-gauge-closed}, we have
  \(\gauge{\ConvexSet}{x} = \suppf{\polar{\ConvexSet}}{x}\) for every
  \(x \in \Euclidean\); see, e.g., \cite[p.~125]{Rockafellar97a}.
  Thus, \(0 < \gauge{\ConvexSet}{\zb} =
  \suppf{\polar{\ConvexSet}}{\zb} < \infty\). Set \(\xb \coloneqq
  \zb/\gauge{\ConvexSet}{\zb}\), so that \(\gauge{\ConvexSet}{\xb} =
  1\). Then
  \begin{equation*}
    \Face
    =
    \setst*{
      y \in \polar{\ConvexSet}
    }{
      \iprod{y}{\xb}
      =
      1
    }
    =
    \Face_{\xb}
  \end{equation*}
  by~\eqref{eq:2}. This completes the precise description of all the
  nonempty exposed faces of~\(\polar{\ConvexSet}\).

  Finally, let \(\xb \in \ConvexSet\) such that \(\Face_{\xb} \neq
  \emptyset\). To prove~\eqref{eq:polar-face-relint}, it suffices to
  prove that \(\relint\paren*{\NormalCone{\ConvexSet}{\xb}}\) meets
  \(\setst{y \in \Euclidean^*}{\iprod{y}{\xb} = 1}\); see, e.g.,
  \cite[Theorem~6.5]{Rockafellar97a}. Suppose not. Then there is a
  hyperplane separating \(\NormalCone{\ConvexSet}{\xb}\) and the
  second set, i.e., there exists a nonzero \(h \in \Euclidean\) and
  \(\alpha \in \Reals\) such that
  \begin{gather}
    \NormalCone{\ConvexSet}{\xb}
    \subseteq
    \setst{
      y \in \Euclidean^*
    }{
      \iprod{y}{h}
      \leq
      \alpha
    },
    \\
    \setst{
      y \in \Euclidean^*
    }{
      \iprod{y}{\xb} = 1
    }
    \subseteq
    \setst{
      y \in \Euclidean^*
    }{
      \iprod{y}{h} \geq \alpha
    }.
  \end{gather}
  Note that \(0 \in \NormalCone{\ConvexSet}{\xb}\) shows that \(\alpha
  \geq 0\), and that \(h = \lambda \xb\) for some \(\lambda > 0\).
  Positive homogeneity now shows that \(\iprod{y}{h} \leq 0\) for all
  \(y \in \NormalCone{\ConvexSet}{\xb}\), whence \(\Face_{\xb} =
  \emptyset\), a contradiction.
\end{proof}

\end{document}